\documentclass[a4paper,12pt]{amsart}
\usepackage{float} 
\usepackage{soul}
\usepackage{comment}
\usepackage[T1]{fontenc}
\usepackage[utf8]{inputenc}
\usepackage[english]{babel}
\usepackage{etoolbox}
\usepackage[T1]{fontenc}
\usepackage{booktabs}
\usepackage{subfigure}
\usepackage{amsmath, amssymb, amsthm, amsfonts}
\usepackage{mathrsfs}
\usepackage{tikz}
\usepackage{graphicx}    
\usepackage{caption}     
\usepackage{subcaption}

\usetikzlibrary{arrows}
\usetikzlibrary{matrix}
\usepackage[left=1.8cm, right=1.8cm, top=2cm]{geometry}
\usepackage{enumerate}
\usepackage{hyperref}
\usepackage{algorithm}
\usepackage{algpseudocode}
\makeatletter
\newenvironment{breakablealgorithm}
  {
   \begin{center}
     \refstepcounter{algorithm}
     \hrule height.8pt depth0pt \kern2pt
     \renewcommand{\caption}[2][\relax]{
       {\raggedright\textbf{\ALG@name~\thealgorithm} ##2\par}
       \ifx\relax##1\relax
         \addcontentsline{loa}{algorithm}{\protect\numberline{\thealgorithm}##2}
       \else
         \addcontentsline{loa}{algorithm}{\protect\numberline{\thealgorithm}##1}
       \fi
       \kern2pt\hrule\kern2pt
     }
  }{
     \kern2pt\hrule\relax
   \end{center}
  }
\makeatother

\date{}
\usepackage[font=small,labelfont=bf]{caption}
\theoremstyle{plain}

\newtheorem{theorem}{Theorem}[section] 
\newtheorem{lemma}{Lemma}[section]

\newtheorem{proposition}{Proposition}[section]

\nonstopmode\numberwithin{equation}{section}
\theoremstyle{definition}
 
\newtheorem{rema}{Remark}[section]

\newtheorem*{ques*}{Question}

\makeatletter
\def\tagform@#1{\maketag@@@{\ignorespaces#1\unskip\@@italiccorr}}
\let\orgtheequation\theequation
\def\theequation{(\orgtheequation)}
\makeatother
\let\orgautoref\autoref

\renewcommand{\autoref}[1]{\def\equationautorefname{}\orgautoref{#1}}

\usepackage{fancyhdr}

\makeatletter
\def\@setauthors{%
  \begingroup
  \trivlist
  \centering\footnotesize \@topsep30\p@\relax
  \advance\@topsep by -\baselineskip
  \item\relax
  \andify\authors
  \def\\{\protect\linebreak}%
  \textbf{\authors}%
  \endtrivlist
  \endgroup
}
\makeatother

\begin{document}

\title[Randomized Krylov-Projected Iterative Regularization]{Randomized Krylov-Projected Iterated Tikhonov Regularization for Large-Scale Ill-posed Problems 
Under A Posteriori Stopping Rule} 


\author{Ravi Verma$^{\dagger}$}
\thanks{$^{\dagger}$Department of Mathematics, Indian Institute of Technology Roorkee, Roorkee, Uttarakhand, 247667, India. Email: \texttt{ravi\_v@ma.iitr.ac.in}} 

\author{Harshit Bajpai$^{\ddagger}$} 
\thanks{$^{\ddagger}$Department of Mathematics, Indian Institute of Technology Roorkee, Roorkee, Uttarakhand, 247667, India. Email: \texttt{harshit\_b@ma.iitr.ac.in}} 

\author{Ankik Kumar Giri$^{\intercal}$} 
\thanks{$^{\intercal}$Department of Mathematics, Indian Institute of Technology Roorkee, Roorkee, Uttarakhand, 247667, India. Email: \texttt{ankik.giri@ma.iitr.ac.in}}


\maketitle
\begingroup
\renewcommand{\thefootnote}{}
\footnotetext{




\textbf{Funding.}
Ravi Verma was supported by the University Grants Commission (UGC), Government of India (ID No.: 241620074745). Harshit Bajpai gratefully acknowledges the Ministry of Education, Government of India, and the Indian Institute of Technology Roorkee for funding this research through a Ph.D fellowship.

\medskip
}
\endgroup
\begin{quote}
{{\em \bf Abstract.}} We introduce two novel randomized iterative regularization frameworks, termed \texttt{RIGKT} and \texttt{RIAT}, for solving large-scale linear ill-posed inverse problems governed by systems of equations. The proposed methods combine randomized iterated Tikhonov regularization with Krylov subspace projection techniques, utilizing Golub--Kahan bidiagonalization for general rectangular systems (\texttt{RIGKT}) and Arnoldi decomposition for square systems (\texttt{RIAT}). Unlike existing deterministic schemes that rely on fixed iteration counts, our framework incorporates randomized equation selection, an adaptive step-size strategy, and a global, discrepancy-based a posteriori early-stopping rule tailored specifically to the stochastic setting. We present a comprehensive regularization analysis establishing Bregman-distance monotonicity, finite termination, exact-data convergence, and pathwise stability under noise. Furthermore, we prove that the stopped iterates converge almost surely and in the mean-square sense to the true solution, establishing a rigorous regularization property. To the best of our knowledge, this is the first theoretical framework to simultaneously account for randomization, Krylov-subspace dimension reduction, and implementable early stopping. Numerical experiments involving two-dimensional X-ray computed tomography (CT) and image deblurring  demonstrate that \texttt{RIGKT} and \texttt{RIAT} reliably reconstruct structural features across various noise regimes.

\end{quote}

\vspace{.3cm}

\noindent

{ \bf Keywords:}  ill-posed  problems; large-scale problems;  randomized regularization methods; Golub--Kahan decomposition; Arnoldi decomposition; randomized discrepancy principle; Krylov subspace; medical imaging.

\vspace{0.8mm}
{\bf MSC codes:} 65J22, 65J20,  47J06

\section{Introduction}
This work is concerned with the numerical solution of a class of ill-posed inverse problems modeled by the system of equations
\begin{equation}\label{model_equ:1}
\mathcal{T}_iu=v_i, \quad  i=1,\ldots,q.
\end{equation}
Here, each $\mathcal{T}_i$ denotes a bounded linear operator mapping the Hilbert space $\mathcal{U}$ into $\mathcal{V}_i$.  We assume that these operators are not continuously invertible, a common characteristic of inverse problems. Systems of this type occur in numerous practical applications including imaging sciences, geophysical exploration and machine learning,  see~\cite{bottou2018optimization,engl1996regularization,groetsch1984theory,natterer2001mathematics,scherzer2009variational}. Unless specified otherwise, these spaces are equipped with their standard inner products $\langle \cdot, \cdot \rangle$, which induce the associated norms $\|\cdot\|$.
We also consider a  proper, lower semi-continuous, and strongly convex functional $\mathfrak{f}: \mathcal{U} \to \mathbb{R}\cup\{+\infty\}$, with effective domain 
\begin{equation}\label{effective_domain}
\operatorname{dom}(\mathfrak{f}):=\{u\in\mathcal{U} : \mathfrak{f}(u) < \infty\}.
\end{equation}
We assume that \eqref{model_equ:1} admits a solution $u^\dagger$ in $\operatorname{dom}(\mathfrak{f})$, where the functional $\mathfrak{f}$ incorporates any prior knowledge regarding the expected properties of the sought solution.\\ 
In real-life applications the exact-data $\{v_1,\ldots,v_q\}$ is not available. Instead, one only has access to noisy data $\{v_1^{\delta_1},\ldots, v_{q}^{\delta_q}\}$ satisfying $\|v_{i}^{\delta_i} -v_i\| \leq \delta_i$ for each $i$, where $\delta_i$ represents the noise level of the data in the space $\mathcal{V}_i$. The total noise is given by
\[\delta : = \sqrt{\delta_1^2 + \cdots + \delta_q^2}.\]
For a true solution $u^\dagger\in\mathcal{U}$ of \eqref{model_equ:1} corresponding to the exact-data $v_i:=\mathcal{T}_iu^\dagger$, $i=1,\ldots,q$, our aim is to recover a good approximation of $u^\dagger$ using the noisy data  $v_i^{\delta_i}$ such that 
\begin{equation}\label{f_min_sol_exp}
\mathfrak{f}(u^\dagger)= \min\{\mathfrak{f}(u): \mathcal{T}_iu=v_i,\; i=1, \ldots,q\}. 
\end{equation}
Due to the inherent ill-posed nature of \eqref{model_equ:1}, reconstruction of  $u^\dagger$ from noisy observations demands regularization techniques to ensure a stable and meaningful  approximate solution. Numerous regularization strategies have been proposed for the treatment of ill-posed inverse problems, see  \cite{arridge2019solving,benning2018modern,engl1996regularization,hertrich2025learning,ito2014inverse,kaltenbacher2008iterative,scherzer2009variational} and the references therein for instance. An important class of these techniques falls under the category of Tikhonov-type variational regularization \cite{engl1996regularization,scherzer2009variational}, which are classical regularization techniques for  addressing ill-posed inverse problems. They stabilize the solution by incorporating an additional regularization term that balances data fidelity against solution regularity. As an iterative variant, a nonstationary iterated Tikhonov regularization scheme was introduced in \cite{jin2014fast}, which is given by
\begin{equation}\label{Jin&Lu2014}
\begin{cases}
u_{k}^{\delta} = \underset{u\in\mathcal{U}}{\arg\min} \{\mathfrak{f}(u)-\langle \zeta_{k}^{\delta},u\rangle\}, \\
\zeta_{k+1}^{\delta} = \zeta_{k}^{\delta}
- t_k^{\delta} \mathcal{T}^{*}(\gamma_k + \mathcal{T}\mathcal{T}^{*})^{-1}
(\mathcal{T}u_{k}^\delta-v^{\delta}).
\end{cases}
\end{equation}
Here $\zeta_0^\delta:=0$, $\{\gamma_k\}_{k\geq0}$ is a preassigned sequence of strictly positive real numbers and $t_k^\delta$ denotes the step size.  The operator $\mathcal{T}:\mathcal{U}\to \mathcal{V}_1 \times \ldots \times \mathcal{V}_q :=\mathcal{V}$ defined by 
\[\mathcal{T}u:=(\mathcal{T}_1u,\ldots, \mathcal{T}_qu),\quad u\in \mathcal{U}\]
is a bounded linear operator with adjoint  $\mathcal{T}^*$ 
and the product space $\mathcal{V}$ carries the canonical inner product induced by the inner products on its component spaces $\mathcal{V}_i$. Through an appropriate choice of the functional $\mathfrak{f}$, the method \eqref{Jin&Lu2014} can incorporate prior information and promote specific structural properties of the sought solution. 
As a special case, when $\mathfrak{f}(u)=\frac{1}{2}\|u\|^2$ and $t_k^\delta=1 \;\forall\;k\geq0$, the method \eqref{Jin&Lu2014} becomes the classical nonstationary iterated Tikhonov regularization~\cite{hanke1998nonstationary}. It is known from \cite{jin2014fast} that the method \eqref{Jin&Lu2014}
converges rapidly when the sequence $\{\gamma_k\}_{k\geq0}$ is chosen as a geometrically decreasing sequence of strictly positive real numbers. However, for \eqref{model_equ:1}, which can be a large-scale problem,  method \eqref{Jin&Lu2014} becomes highly demanding as it requires the use of all  $q$ equations simultaneously.

Recently, stochastic optimization techniques \cite{bajpai2025stochastic,bajpai2024hanke,bergou2025stochastic,huang2025early,jahn2020discrepancy,jin2026stochastic,jin2019regularizing,jin2020convergence,lu2022stochastic} have emerged as a prominent tool for handling large-scale inverse problems. By utilizing only a randomly selected equation or a subset of equations at each iteration, stochastic methods significantly reduce computational cost while maintaining good approximation quality. Motivated by these developments, a stochastic approach called \emph{Randomized iterated Tikhonov regularization} (\texttt{RITR}) is developed in \cite{jin2025randomized}, which is given by  
\begin{equation}\label{Jin&Lu2025}
\begin{cases}
u_{k}^{\delta} = \underset{u\in \mathcal{U}}{\arg\min} \{ \mathfrak{f}(u) - \langle \zeta_{k}^{\delta}, u \rangle \}, \\
\zeta_{k+1}^{\delta} = \zeta_{k}^{\delta}
- t_k^{\delta} \mathcal{T}_{I_k}^{*}
(\gamma_k + \mathcal{T}_{I_k}\mathcal{T}_{I_k}^{*})^{-1}
(\mathcal{T}_{I_k} u_{k}^{\delta} - v_{I_k}^{\delta}),
\end{cases}
\end{equation}
where for a subset $I = \{i_1, \ldots, i_b\} \subset \{1, \ldots, q\}$ containing $b$ elements, $I_k$ is a batch of $b$ equations chosen randomly at $k$-th step, $\mathcal{V}_I = \mathcal{V}_{i_1} \times \dots \times \mathcal{V}_{i_b}$ and  $v_{I} = (v_{i_1}, \dots, v_{i_b}) \in  \mathcal{V}_I$. The operator $\mathcal{T}_{I} : \mathcal{U} \to \mathcal{V}_I$ is defined as $\mathcal{T}_I u := (\mathcal{T}_{i_1} u, \dots, \mathcal{T}_{i_b} u)$ with $\mathcal{T}_I^*$ as its adjoint. Given the information on the noise level $\delta_i,\;i=1,\ldots,q$, the step sizes are chosen according to the rule
\begin{equation}\label{RITR_step_size}
t_k^\delta =
\begin{cases}
\min\left\{\frac{\mu_0\big\|\big(\gamma_k+\mathcal{T}_{I_k}\mathcal{T}_{I_k}^*\big)^{-1/2}r_k^\delta\big\|^2}{\big\|\mathcal{T}_{I_k}^*\big(\gamma_k+\mathcal{T}_{I_k}\mathcal{T}_{I_k}^*\big)^{-1}r_k^\delta\big\|^2},\;\mu_1\right\}\quad \text{if}\; \gamma_k\big\|\big(\gamma_k+\mathcal{T}_{I_k}\mathcal{T}_{I_k}^*\big)^{-1/2}r_k^\delta\big\|^2>\tau\delta_{I_k}^2,\\
0\qquad\qquad\qquad\qquad\qquad\qquad\quad\quad\text{otherwise},
\end{cases}
\end{equation}
where $r_k^\delta:= \mathcal{T}_{I_k}u_k^\delta-v_{I_k}^\delta$. It is shown that for an appropriate selection of  parameters $\tau>1,\; \mu_0>0 \; \text{ and }\mu_1>0$, the method \ref{Jin&Lu2025} together with step-size rule \ref{RITR_step_size}, converges to a solution of \eqref{model_equ:1} satisfying \eqref{f_min_sol_exp}. According to \ref{RITR_step_size}, the step-size at the $k$-th iteration is set to zero, whenever the discrepancy condition
\begin{equation}\label{DP_I_k}
\gamma_k \|(\gamma_k I + T_{I_k}T_{I_k}^{*})^{-1/2} r_k^\delta\|^2 \leq \tau\delta_{I_k}^2
\end{equation}
is satisfied for the randomly selected batch $I_k$. Thus, the step-size selection rule given by \ref{RITR_step_size} embodies the principle of an \emph{a posteriori} stopping rule. The analysis in \cite{jin2025randomized} shows that,  almost surely, the condition \ref{DP_I_k} is eventually satisfied for every admissible batch once the iteration index is sufficiently large. However, at a given iteration step $k$, the rule \ref{RITR_step_size} tests only the batch $I_k$ selected at that step. Consequently, $t_k^\delta = 0$
verifies the satisfaction of the discrepancy condition for the batch $I_k$ alone and does not verify the corresponding conditions hold simultaneously for all the batches. Therefore, although \cite{jin2025randomized} establishes an almost sure finite termination property, the randomized iterations do not contain an explicit global verification step. In particular, it does not provide an implementable global \emph{a posteriori} stopping criterion that terminates the method precisely when the discrepancy conditions hold simultaneously for all batches.\\
Moreover, in real-world applications, the operator equations in \eqref{model_equ:1} are first discretized, resulting in large-scale matrix systems. Thus, although the method \eqref{Jin&Lu2025} is  highly efficient, its implementation would still require solving linear systems involving high-dimensional matrices. Consequently,  dimension reduction techniques are necessary to make the method feasible for large-scale inverse problems.

To address these challenges, we incorporate Krylov subspace projection methods within the proposed frameworks. In particular, for rectangular matrices we employ Golub--Kahan bidiagonalization, while for square matrices we utilize the Arnoldi decomposition. By incorporating these decompositions into \eqref{Jin&Lu2025}, we propose two novel iterative regularization methods. For rectangular matrices, we approximate the discretized operators by their low-rank Golub--Kahan approximations, which gives our first  method, named the Randomized Iterated Golub--Kahan--Tikhonov (\texttt{RIGKT}) method, as
\begin{equation}\label{G_K_it_scheme_proposed}
\begin{cases}
u^{\delta}_{n,k} = \underset{u_n\in\mathcal{U}_n}{\arg\min} \{ \mathfrak{f}_n(u_n) - \langle \zeta^{\delta}_{n,k}, u_n \rangle \}, \\
\zeta^{\delta}_{n,k+1} = \zeta^{\delta}_{n,k} - t^{\delta}_k V_{l,i_k} B_{l+1,l,i_k}^{*} ( \gamma_k I + B_{l+1,l,i_k} B_{l+1,l,i_k}^{*} )^{-1} U_{l+1,i_k}^{*} ( \mathcal{T}_{i_k,n}^l u^{\delta}_{n,k} - v^{\delta}_{i_k,n}),
\end{cases}
\end{equation}
where $\zeta_{n,0}^\delta := 0$, $V_{l,i_k},\; B_{l+1,l,i_k}$ and $U_{l+1,i_k}$ are matrices involved in $l\ll n$ steps of Golub--Kahan bidiagonalization process for the discretized operator $\mathcal{T}_{i_k,n}$  chosen randomly at $k$-th step, $\mathcal{T}_{i_k,n}^l$ is the Golub--Kahan approximation for  $\mathcal{T}_{i_k,n}$ and $t_k^\delta$ denotes the step-size at iteration $k$. Here $i_k\in \{1,\ldots,q\}$ represents the index chosen randomly at the $k$-th iteration whereas $n$ denotes the discretization level of the underlying equations. This should not be confused with the dimension of the matrices. For a detailed description of the method, we refer the reader to Section~\ref{section_proposed_methods}.\\
Similarly, for square matrices, we approximate the discretized operators by their low-rank Arnoldi approximation. This gives our second method, named the Randomized Iterated Arnoldi--Tikhonov (\texttt{RIAT}) method, as
\begin{equation}\label{Arnoldi_it_scheme_proposed}
\begin{cases}
u_{n,k}^\delta = \underset{u_n \in \mathcal{U}_n}{\arg\min} \{\mathfrak{f}_n(u_n) - \langle \zeta_{n,k}^\delta, u_n \rangle \}, \\
\zeta_{n,k+1}^\delta = \zeta_{n,k}^\delta - t_k^\delta \tilde{V}_{l,i_k} H_{l+1,l,i_k}^{*} (\gamma_k I + H_{l+1,l,i_k} H_{l+1,l,i_k}^{*})^{-1} \tilde{V}_{l+1,i_k}^{*} (\tilde{\mathcal{T}}_{i_k,n}^l u_{n,k}^\delta - v_{i_k,n}^\delta),
\end{cases}
\end{equation}
where, $\zeta_{n,0}^\delta := 0$, $\tilde{V}_{l,i_k}$ and $H_{l+1,l,i_k}$ are the matrices involved in $l\ll n$ steps of Arnoldi process for the discretized operator $\mathcal{T}_{i_k,n}$ chosen randomly at $k$-th step, $\tilde{\mathcal{T}}_{i_k,n}^l$ is Arnoldi approximation for  $\tilde{\mathcal{T}}_{i_k,n}$ and $t_k^\delta$ is the step size at $k$-th step. The subscripts $i_k$ and $n$ have the same meaning as in the \texttt{RIGKT} method. For detailed description, see Section~\ref{section_proposed_methods}.\\
The schemes given in \eqref{G_K_it_scheme_proposed} and \eqref{Arnoldi_it_scheme_proposed} inherit the computational advantages of both randomization and Krylov subspace projection. Randomization reduces the number of equations used at each iteration, while the Golub--Kahan and Arnoldi decompositions replace high-dimensional matrix operations by computations involving only low-rank matrices. Hence, \texttt{RIGKT} and \texttt{RIAT} provide computationally efficient variants of \texttt{RITR} for large-scale inverse problems.\\
To mitigate the issue of a global \emph{a posteriori} criterion, we build on the strategy developed in the analysis of the stochastic mirror descent \cite{huang2025early} and the stochastic heavy ball method  \cite{gu2026posteriori}. We formulate \emph{a posteriori} criterion for methods~\eqref{G_K_it_scheme_proposed} and~\eqref{Arnoldi_it_scheme_proposed} and develop corresponding algorithms that intrinsically embed the stopping criterion into the iterative framework. These algorithms verify the discrepancy condition after a fixed number of iterations for all the component equations and terminates the method at the first index at which they are satisfied simultaneously, yielding an implementable stopping rule with low computational cost that retains the regularization property.

To the best of our knowledge, this is the first work that integrates randomized iterated Tikhonov regularization with Golub--Kahan bidiagonalization and Arnoldi decomposition while incorporating a global discrepancy-based \emph{a posteriori} stopping criterion.

\subsection{Contributions.}
Our primary contributions are the following:
\begin{itemize}
\item We propose two randomized iterative regularization methods, namely \texttt{RIGKT} and \texttt{RIAT} for solving large-scale linear ill-posed problems. These methods incorporate Golub--Kahan bidiagonalization and Arnoldi decomposition respectively.
\item Building on the global early-stopping framework of \cite{huang2025early}, we develop discrepancy-based \texttt{RIGKT} and \texttt{RIAT} algorithms that incorporate adaptive step sizes into the underlying stochastic schemes. We prove that both algorithms terminate after finitely many iterations.
\item We prove that the proposed methods possess the regularization property. In particular, we show that the iterates generated by \texttt{RIGKT} and \texttt{RIAT} converge almost surely to the solution of corresponding projected problems. We emphasize that due to randomness in the iterates as well as in the stopping index along with the Krylov projection techniques, our analysis changes drastically and requires novel ideas.
\item We validate our theoretical results through numerical simulations. In particular we demonstrate the effectiveness of our approaches on large-scale medical imaging problems, specifically focusing on reconstruction in two-dimensional X-ray Computed Tomography and the image deblurring problem.  
\end{itemize}
\subsection{Organization.}
The rest of the paper is structured as follows. Section~\ref{section_Preliminaries} introduces the preliminary material from convex and functional analysis and discusses the finite-dimensional discretization of \ref{model_equ:1}. Section~\ref{section_proposed_methods} derives the proposed \texttt{RIGKT} and \texttt{RIAT} methods by incorporating Golub--Kahan bidiagonalization and the Arnoldi process, respectively. Section~\ref{section_con_analysis_G_K} presents an algorithm for the \texttt{RIGKT} method based on a global \emph{a posteriori} stopping criterion and establishes its convergence and regularization properties. Section~\ref{section_con_analysis_Arnoldi} presents the corresponding implementation and convergence analysis for the \texttt{RIAT} method. Due to its close similarity with the analysis of \texttt{RIGKT}, only the main results are presented and the detailed proofs are omitted. Section~\ref{section_numerics} reports numerical experiments illustrating the effectiveness of the proposed methods. Finally, Section~\ref{conclusion} summarizes the main conclusions and outlines directions for future research.

\section{Preliminaries}\label{section_Preliminaries}
In Subsections~\ref{subsec:convex} and~\ref{subsec:functional}, we briefly review the fundamental concepts and results from convex analysis and functional analysis that are used throughout this paper. A detailed description of the finite-dimensional discretization of the large-scale system~\eqref{model_equ:1} is presented in Subsection~\ref{subsec:dis}. For comprehensive treatments of convex analysis and functional analysis, the reader is referred to \cite{BauschkeCombettes2011,RyuYin2022,zualinescu2002convex} and \cite{conway2019course,nair2021functional}, respectively. The discretization framework adopted in this work follows the approach presented in \cite{engl1996regularization}.

\subsection{Tools from convex analysis}\label{subsec:convex}
A convex functional $\mathfrak{f}:\mathcal{U}\to\mathbb{R}\cup\{+\infty\}$ is called proper if its effective domain defined by~\ref{effective_domain}
is non-empty. The subdifferential of $\mathfrak{f}$ at a point $u\in\operatorname{dom}(f)$ is defined as
\[\partial \mathfrak{f}(u):=\{\zeta\in \mathcal{U}: \mathfrak{f}(\bar{u})-\mathfrak{f}(u) \geq \langle\zeta,\bar{u}-u\rangle \quad \forall\;  \bar{u} \in \mathcal{U}\}.\]
If $\partial \mathfrak{f}(u)\neq\emptyset$, then $\mathfrak{f}$ is called subdifferentiable at $u$ and each element $\zeta\in \partial \mathfrak{f}(u)$ is referred to as a subgradient at $u$.  
For a given $u\in \operatorname{dom}(\mathfrak{f})$ and $\zeta\in\partial \mathfrak{f}(u)$, the Bregman distance induced by $\mathfrak{f}$ at a point $u$ in the direction of $\zeta$ is defined as 
\begin{equation}\label{bregman_formula}
D_{\mathfrak{f}}^\zeta(\bar{u},u):=\mathfrak{f}(\bar{u})-\mathfrak{f}(u)-\langle\zeta,\bar{u}-u\rangle,\quad \forall\; \bar{u}\in \operatorname{dom}(\mathfrak{f}).
\end{equation}
A proper functional $\mathfrak{f}:\mathcal{U}\to\mathbb{R}\cup\{+\infty\}$ is said to be $\nu$-strongly convex for  $\nu>0$, if 
\[\mathfrak{f}\left(\lambda u+(1-\lambda)\bar{u}\right)+\nu \lambda(1-\lambda)\|u-\bar{u}\|^2\leq \lambda \mathfrak{f}(u)+(1-\lambda)\mathfrak{f}(\bar{u})\]
for all  $u,\bar{u}\in\operatorname{dom}(\mathfrak{f})$ and $\lambda\in[0,1]$. Such functionals satisfy the following coercive condition
\begin{equation}\label{bregman_norm_relation}
D_{\mathfrak{f}}^{\zeta}(\bar{u},u)\geq\nu\|\bar{u}-u\|^2,\quad \forall\; \bar{u}, u \in \operatorname{dom}(\mathfrak{f}), \text{ and } \zeta\in \partial \mathfrak{f}(u).
\end{equation}
This estimate will be used to control the error between iterates and exact solutions in terms of Bregman distances.
The Legendre-Fenchel conjugate of $\mathfrak{f}$ is a convex functional $\mathfrak{f}^*:\mathcal{U}\to\mathbb{R}\cup\{+\infty\}$ defined by 
\[\mathfrak{f}^*(\zeta):=\sup_{u\in \mathcal{U}}\{\langle\zeta,u\rangle-\mathfrak{f}(u)\},\quad\zeta\in \mathcal{U}.\]
For every convex, proper and lower semi-continuous functional $\mathfrak{f}:\mathcal{U}\to \mathbb{R}\cup\{+\infty\}$, the following relation always holds
\begin{equation}\label{subdifferential_relation}
\zeta\in \partial \mathfrak{f}(u)\iff u\in\partial \mathfrak{f}^*(\zeta)\iff \mathfrak{f}(u)+\mathfrak{f}^*(\zeta)=\langle\zeta,u\rangle.
\end{equation}
Consequently,
\begin{equation*}\label{bregman_relation}
D_{\mathfrak{f}}^\zeta(\bar{u},u)=\mathfrak{f}(\bar{u})+\mathfrak{f}^*(\zeta)-\langle\zeta,\bar{u}\rangle.
\end{equation*}
In addition, if $\mathfrak{f}$ is $\nu$-strongly convex, then $\operatorname{dom}(\mathfrak{f}^*)=\mathcal{U}$, $\mathfrak{f}^*$ is Frechet differentiable with $ \zeta\in \partial\mathfrak{f}(u) \iff \nabla\mathfrak{f}^*(\zeta) = u $ and the Lipschitz gradient $\nabla \mathfrak{f}^*$ satisfies
\begin{equation}\label{taylor_first_order_approximation_fenchel_dual}
\begin{aligned}
\mathfrak{f}^*(\eta)-\mathfrak{f}^*(\zeta) - \langle \eta -\zeta, \nabla \mathfrak{f}^*(\zeta) \rangle &\leq \frac{1}{4\nu} \|\eta-\zeta\|^2, \quad \forall \, \zeta, \eta \in \mathcal{U},\\
\|\nabla \mathfrak{f}^*(\zeta)-\nabla \mathfrak{f}^*(\eta)\| &\leq \frac{\|\zeta-\eta\|}{2\nu}, \quad \forall \, \zeta, \eta \in \mathcal{U}.
\end{aligned}
\end{equation}

\subsection{Tools from functional analysis}\label{subsec:functional}
Let $A:\mathcal X\to\mathcal Y$ be a bounded linear operator between Hilbert spaces $\mathcal X$ and $\mathcal Y$.
Then, for any $\alpha>0$, the operator  $(\alpha I+AA^*)$ is self-adjoint, strictly positive and invertible. Consequently, by the spectral theorem 
there exists a unique bounded, self-adjoint and strictly positive operator $(\alpha I+AA^*)^{1/2}$, called the positive square root of $(\alpha I+AA^*)$, such that
\[(\alpha I+AA^*)^{1/2}(\alpha I+AA^*)^{1/2}=\alpha I+AA^*.\]
The inverse square root of $(\alpha I+AA^*)$ is defined as $(\alpha I+AA^*)^{-1/2}:=\bigl((\alpha I+AA^*)^{1/2}\bigr)^{-1}$, which is again a bounded self-adjoint operator. Moreover, if $0<\alpha_1\leq\alpha_2$, then in the sense of positive operators, we have $(\alpha_1I+AA^*)\leq(\alpha_2I+AA^*)$. Consequently, $(\alpha_2I+AA^*)^{-1/2}\leq(\alpha_1I+AA^*)^{-1/2}$. Furthermore, the following operator norm estimates hold.
\begin{equation}\label{operator_bounds}
\big\|(\alpha I+AA^*)^{-1/2}\big\|\leq
\frac{1}{\sqrt{\alpha}},\quad
\big\|A^*(\alpha I+AA^*)^{-1/2}\big\|\leq1.
\end{equation}
\\
We will also use the following elementary property of matrices. Let $Q\in \mathbb{R}^{m\times n}$ be a matrix with orthonormal columns, equivalently $Q^*Q=I_{n}$, where $I_n$ is an $n\times n$ identity matrix. Then, for every $x\in\mathbb{R}^{n}$ and $y\in \mathbb{R}^m$, we have $\|Qx\|\leq \|x\|$ and $\|Q^*y\|\leq\|y\|$, respectively. Consequently, $\|Q\|\leq 1$
and $\|Q^*\|\leq 1$.

\subsection{Discretization of the problem}\label{subsec:dis}
In this subsection, we consider the discretization  of \eqref{model_equ:1} together with the functional $\mathfrak{f}$.\\
Let $\mathcal{U}_1 \subset \mathcal{U}_2 \subset \cdots \subset \mathcal{U}_n \subset \cdots \subset \mathcal{U}$ be a sequence of finite-dimensional subspaces whose union is dense in $\mathcal{U}$ with $\dim(\mathcal{U}_n) < \infty$. Define the projection operators $P_n:\mathcal{U}\to \mathcal{U}_n$ and $Q_{i,n}:\mathcal{V}_i \to \mathcal{V}_{i,n}:=\mathcal{T}_i(\mathcal{U}_n)$, and the inclusion operator $\mathfrak{I}_n:\mathcal{U}_n \hookrightarrow \mathcal{U}.$ These operators together with \eqref{model_equ:1} and corresponding noisy counterpart $\mathcal{T}_iu^\delta=v_i^\delta$, $i=1,\ldots,q$, give
\[Q_{i,n}\mathcal{T}_i \mathfrak{I}_n P_n u = Q_{i,n} v_i \quad\text{and}\quad Q_{i,n} \mathcal{T}_i \mathfrak{I}_n P_n u^\delta=Q_{i,n} v_i^\delta.\]
Define the operators $\mathcal{T}_{i,n}:\mathcal{U}_n \to \mathcal{Y}_{i,n}$, given by $ \mathcal{T}_{i,n} : = Q_{i,n} \mathcal{T}_i \mathfrak{I}_n$ and finite-dimensional vectors 
\[v_{i,n} := Q_{i,n} v_i,\quad v^{\delta}_{i,n} := Q_{i,n} v^{\delta}_i,\quad u_n := P_n u,\quad \text{and } u^{\delta}_n := P_n u^{\delta}.\]
Now, we can identify $\mathcal{T}_{i,n}$ with a matrix in $\mathbb{R}^{n_1\times n_2}$  with $\operatorname{dim}(\mathcal{U}_n)=n_2$ and $\operatorname{dim}(\mathcal{V}_{i,n})=n_1$, $v_{i,n}$ and  $v_{i,n}^\delta $ with elements in $\mathbb{R}^{n_1}$ and $ u_n$,   $u_n^\delta$ with elements in $\mathbb{R}^{n_2}.$
Thus, we obtain the  following linear system of equations
\begin{equation}\label{disc_model_exactdata}
\mathcal{T}_{i,n} u_n=v_{i,n}\quad  i=1,\ldots,q.
\end{equation}
\begin{equation}\label{disc_model_inexactdata}
\mathcal{T}_{i,n} u_n^\delta=v_{i,n}^\delta, \quad i=1,\ldots,q.
\end{equation}
From now on, for each $i\in\{1,\ldots,q\}$, we consider $\mathcal{T}_{i,n}$ to be a rectangular matrix that represents a discretization of $\mathcal{T}_i$.\\
Furthermore, we denote the restriction of the functional $\mathfrak{f}$ to the subspace $\mathcal{U}_n$ by $\mathfrak{f}_n$, \[\mathfrak{f}_n : \mathcal{U}_n \to \mathbb{R} \cup \{+\infty\}.\]
Thus, the discretized version of \eqref{f_min_sol_exp} can be written as 
\begin{equation}\label{disc_f_min_sol_exp}
\mathfrak{f}_n(u_n^\dagger)=\min\{\mathfrak{f}_n(u_n):\mathcal{T}_{i,n}u_n=v_{i,n}, \; i=1,\ldots,q\},
\end{equation}
where $u_n^\dagger$ is an $n_2$-dimensional approximation  of $u^\dagger.$\\
It should be noted that the discretization process introduces discretization error. The propagation of this error may be estimated using the framework of \cite{natterer1977regularization}. However, since this work mainly concerns the construction and analysis of the proposed \texttt{RIGKT} and \texttt{RIAT} methods, we do not pursue a detailed discretization-error analysis here.\\ 
We conclude this subsection by choosing a suitable basis $\{e_1,e_2,\ldots,e_{n_2}\}$ for the discrete space $\mathcal{U}_n$. Then, any element $u_n \in \mathcal{U}_n$ admits a unique representation
\begin{equation*}
u_n = \sum_{j=1}^{n_2} u_j^{(n)} e_j.
\end{equation*}
Consequently, $u_n$ can be identified with its coefficient vector $\mathbf{u}_n = [u_1^{(n)}, \dots, u_{n_2}^{(n)}]^* \in \mathbb{R}^{n_2}$.
This identification allows the finite-dimensional operator equations in \ref{disc_model_exactdata} and \ref{disc_model_inexactdata} to be treated as matrix equations and therefore provides the algebraic formulation required for their numerical implementation. Henceforth, we do not distinguish notationally between an element $u_n$ of $\mathcal{U}_n$ and its corresponding coefficient vector.\\
In the case where $\{e_j\}_{j=1}^{n_2}$ constitutes an orthonormal basis, the norms may be scaled in a way such that $\|\mathbf{x}_n\|_2 = \|x_n\|$, where $\|\cdot\|_2$ denotes the usual Euclidean norm. Nevertheless, since certain discretization schemes utilize non-orthonormal bases, this equality may not hold in general. Hence, following the approach in \cite{ramlau2019error}, we assume the existence of positive constants $\kappa$ and $\tilde{\kappa}$, independent of $n$, such that the following norm equivalence holds
\begin{equation*}
\kappa \|\mathbf{u}_n\|_2 \leq \|u_n\| \leq \tilde{\kappa} \|\mathbf{u}_n\|_2.
\end{equation*}

This inequality is satisfied in many practical applications and prevents the coordinate representation from becoming increasingly ill-conditioned as the discretization is refined, see \cite{de1978practical,goodman2016discrete}.

Now, based on the discretized systems \eqref{disc_model_exactdata} and \eqref{disc_model_inexactdata}, an approximation of $u_n^\dagger$ can be obtained via the discretized version of the iterative scheme \eqref{Jin&Lu2025}, given by
\begin{equation}\label{sol_via_disc}
\begin{cases}
u_{n,k}^\delta = \underset{u_n \in \mathcal{U}_n}{\arg\min} \left\{ \mathfrak{f}_n(u_n) - \langle \zeta_{n,k}^\delta, u_n \rangle \right\}, \\
\zeta_{n,k+1}^\delta = \zeta_{n,k}^\delta - t_k^\delta \mathcal{T}_{i_k,n}^* (\gamma_k I + \mathcal{T}_{i_k,n} \mathcal{T}_{i_k,n}^*)^{-1} (\mathcal{T}_{i_k,n} u_{n,k}^\delta - v_{i_k,n}^\delta),
\end{cases}
\end{equation}
initialized with $\zeta_{n,0}^\delta := 0$. In this formulation, $u_{n,k}^\delta$ and $\zeta_{n,k}^\delta$ represent the $k$-th iterates restricted to the subspace $\mathcal{U}_n$. The operator $\mathcal{T}_{i_k,n}$ is the discretized forward operator randomly selected at each iteration $k$, while $t_k^\delta$ and $\{\gamma_k\}_{k\geq 0}$ denote the step-sizes and  a monotonically decreasing sequence of positive real numbers, respectively.

\section{The proposed methods}\label{section_proposed_methods}
In this section, we build upon the discretized scheme \ref{sol_via_disc} to derive the proposed methods given in \eqref{G_K_it_scheme_proposed} and \eqref{Arnoldi_it_scheme_proposed}. Construction of these schemes depends on the dimensions of discretized operators $\mathcal{T}_{i,n}$, $i=1,\ldots,q$. When $\mathcal{T}_{i,n} \in \mathbb{R}^{n_1\times n_2}$ is a rectangular matrix with $n_1\neq n_2$, we employ Golub--Kahan bidiagonalization and in the square case ($n_1 = n_2$), we use a computationally more efficient Arnoldi decomposition. We begin with the case of rectangular matrices.

\subsection{\texttt{RIGKT} method} For high-dimensional rectangular matrices,  Golub--Kahan bidiagonalization is a well-established technique for reducing a large problem to a  manageable lower-dimensional form. In this subsection, we incorporate the Golub--Kahan bidiagonalization process into the discretized iterative scheme \eqref{sol_via_disc} to derive the \texttt{RIGKT} method. Detailed descriptions of Golub--Kahan bidiagonalization can be found in \cite{bjorck2024numerical,golub2013matrix}, and its use in the context of Tikhonov regularization has been explored in \cite{bianchi2026iterated,bjorck1988bidiagonalization,calvetti2003tikhonov,furchi2027iterated,gazzola2015krylov}.\\
To formulate the method, let us fix $ 1 \leq l \ll \min \{n_1,n_2\} $. Then, for each $ i \in \{1,\ldots, q\} $, applying $l$ steps of the Golub--Kahan bidiagonalization process to the matrix $\mathcal{T}_{i,n}$ and the vector $v_{i,n}^\delta$,  we get the Golub--Kahan decomposition
\begin{equation}\label{Golub_Kahan_decomposition}
\mathcal{T}_{i,n} V_{l,i} = U_{l+1,i} B_{l+1,l,i} 
\quad \text{and} \quad 
\mathcal{T}_{i,n}^* U_{l,i} = V_{l,i} B_{l,l,i}^{*}.
\end{equation}
Here, $V_{l,i} = [\dot v_{1,i}, \ldots, \dot v_{l,i}] \in \mathbb{R}^{n_2 \times l}$ and $U_{l+1,i} = [u_{1,i}, \ldots, u_{l+1,i}] \in \mathbb{R}^{n_1 \times (l+1)}$ are matrices with orthonormal columns. The matrices $U_{l,i}\in \mathbb{R}^{n_1\times l}$ and $B_{l,l,i} \in \mathbb{R}^{l \times l}$ are formed by the first $l$ columns of $U_{l+1,i} \in \mathbb{R}^{n_1\times(l+1)}$ and first $l$ rows of the lower bidiagonal matrix 
\[
B_{l+1,l,i} =
\begin{bmatrix}
\alpha_{1,i} & 0 & \cdots & 0\\
\beta_{2,i} & \alpha_{2,i} & \ddots & \vdots\\
0 & \beta_{3,i} & \ddots & 0\\
\vdots & \ddots & \ddots & \alpha_{l,i}\\
0 & \cdots & 0 & \beta_{l+1,i}
\end{bmatrix}
\in \mathbb{R}^{(l+1)\times l}.
\]
Moreover,
\begin{align}
\nonumber \operatorname{span}\{u_{1,i}, \ldots, u_{l+1,i}\} &= \operatorname{span}\{v_{i,n}^\delta,\; (\mathcal{T}_{i,n} \mathcal{T}_{i,n}^*) v_{i,n}^\delta, \ldots, (\mathcal{T}_{i,n} \mathcal{T}_{i,n}^*)^l v_{i,n}^\delta\},\\ \nonumber
\operatorname{span}\{\dot v_{1,i}, \ldots, \dot v_{l,i}\} &=
\operatorname{span}\{\mathcal{T}_{i,n}^* v_{i,n}^\delta,\; (\mathcal{T}_{i,n}^* \mathcal{T}_{i,n}) \mathcal{T}_{i,n}^* v_{i,n}^\delta, \ldots, (\mathcal{T}_{i,n}^* \mathcal{T}_{i,n})^{l-1} \mathcal{T}_{i,n}^* v_{i,n}^\delta\}.
\end{align}
We assume that no breakdown occurs during the first $l$ Golub--Kahan steps, so that all nontrivial entries of $B_{l+1,l,i}$ are nonvanishing, i.e., $B_{l+1,l,i}$ is of rank-$l$. Based on \eqref{Golub_Kahan_decomposition}, we define the rank-$l$ Golub--Kahan approximation of $\mathcal{T}_{i,n}$ as
\begin{equation}\label{Golub_Kahan_approx}
\mathcal{T}_{i,n}^l := U_{l+1,i} B_{l+1,l,i} V_{l,i}^{*} \in \mathbb{R}^{n_1 \times n_2}.
\end{equation}
It is evident that for $l = \min\{n_1, n_2\}$, the approximation becomes exact, i.e., $\mathcal{T}_{i,n}^l = \mathcal{T}_{i,n}$.  Therefore, for sufficiently large $l$, we can assume that $\mathcal{T}_{i,n}^l$ is a good approximation of $\mathcal{T}_{i,n}$. Now, for the fixed $l$, we write the Golub--Kahan projected systems
\begin{equation}\label{dis_G_K_mod_eq_exact_da}
\mathcal{T}_{i,n}^l u_n = v_{i,n}, \quad i=1,\ldots,q,
\end{equation}
\begin{equation}\label{dis_G_K_mod_eq_inexact_da}
\mathcal{T}_{i,n}^lu_n^\delta = v_{i,n}^\delta, \quad i=1,\ldots,q,
\end{equation}
and assume that, for the fixed discretization level $n$,
\begin{equation}\label{GK_projected_solution_set}
\mathcal{S}_{n,l}:=\{u_n\in\operatorname{dom}(\mathfrak f_n):\mathcal T_{i,n}^{l}u_n=v_{i,n},\ i=1,\ldots,q\}
\end{equation}
is non-empty. Consequently, we denote by $u_{n,l}^{\dagger}$ the $\mathfrak f_n$-minimizing solution of \eqref{dis_G_K_mod_eq_exact_da}, i.e.,
\begin{equation}\label{GK_projected_minimizer}
u_{n,l}^{\dagger}:=\arg\min\{\mathfrak f_n(u_n):u_n\in\mathcal S_{n,l}\}.
\end{equation}
Our objective is to replace $\mathcal{T}_{i,n}$ in \eqref{sol_via_disc} with its Golub--Kahan approximation $\mathcal{T}_{i,n}^l$. To achieve this, we first exploit the structural properties of $\mathcal{T}_{i,n}^l$.
Utilizing the definitions of $\mathcal{T}_{i,n}^l$, $U_{l+1,i}$ and $V_{l,i}$, one can observe that
\begin{equation*}
\mathcal{T}_{i,n}^{l*} (\gamma_k I + \mathcal{T}_{i,n}^l \mathcal{T}_{i,n}^{l*})^{-1} = V_{l,i} B_{l+1,l,i}^{*} (\gamma_k I + B_{l+1,l,i} B_{l+1,l,i}^{*})^{-1} U_{l+1,i}^{*}.
\end{equation*}
Since $1\leq l\ll \min\{n_1,n_2\}$, the inversion of $(\gamma_k I + B_{l+1,l,i} B_{l+1,l,i}^{*})$ takes place in the low-dimensional space $\mathbb{R}^{(l+1)\times (l+1)}$. Thus, this identity simplifies the evaluation of the operator on the left-hand side. Consequently, substituting $\mathcal{T}_{i,n}^l$ for $\mathcal{T}_{i,n}$ in \eqref{sol_via_disc} and applying this relation yields the proposed iterative scheme \eqref{G_K_it_scheme_proposed}.\\
We now turn to the case of square matrices to derive the iterative scheme  
\eqref{Arnoldi_it_scheme_proposed}.

\subsection{\texttt{RIAT} method}
As a special case, when $n_1 = n_2 = n$ such that $\dim(\mathcal{U}_n) = \dim(\mathcal{V}_{i,n}) = n$, the discretized operators in \eqref{disc_model_exactdata}  can be represented as square matrices. In this setting, the Arnoldi process provides a natural one-sided Krylov subspace reduction. We incorporate this reduction into the discretized iterative scheme \eqref{sol_via_disc} to derive the \texttt{RIAT} method. Detailed description of the Arnoldi process can be found in \cite{ramlau2019error,saad2003iterative}, while its use in the context of Tikhonov regularization has been explored in  \cite{bianchi2025convergence,gazzola2014generalized}.\\
Let us fix $1 \leq l \ll n$. Then, for each $i \in \{1, \ldots, q\}$, applying $l$ steps of the Arnoldi process to the matrix $\mathcal{T}_{i,n}$ and the vector $v_{i,n}^\delta$, we get the Arnoldi decomposition
\begin{equation}\label{Arnoldi_dec}
\mathcal{T}_{i,n} \tilde{V}_{l,i} = \tilde{V}_{l+1,i} H_{l+1,l,i}.
\end{equation}
Here, the matrix $\tilde{V}_{l+1,i} = [\tilde{v}_{1,i}, \ldots, \tilde v_{l+1,i}] \in \mathbb{R}^{n \times (l+1)}$ has orthonormal columns and constitutes an orthonormal basis for the Krylov subspace
\begin{equation*}
\mathcal{K}_{l+1}(\mathcal{T}_{i,n}, v_{i,n}^\delta) := \operatorname{span}\{v_{i,n}^\delta, \mathcal{T}_{i,n} v_{i,n}^\delta, \ldots, (\mathcal{T}_{i,n})^l v_{i,n}^\delta\}.
\end{equation*}
The matrix $H_{l+1,l,i} \in \mathbb{R}^{(l+1)\times l}$ is an upper Hessenberg matrix, where all entries below the first subdiagonal vanishes. Based on \eqref{Arnoldi_dec}, we define the rank-$l$ Arnoldi approximation of $\mathcal{T}_{i,n}$ by
\begin{equation*}\label{Arnoldi_Appr}
\tilde{\mathcal{T}}_{i,n}^l := \tilde{V}_{l+1,i} H_{l+1,l,i} \tilde{V}_{l,i}^{*} \in \mathbb{R}^{n \times n}.
\end{equation*}
Notably, since $\tilde{\mathcal{T}}_{i,n}^l = \mathcal{T}_{i,n} \tilde{V}_{l,i} \tilde{V}_{l,i}^*$, the approximation becomes exact when $l = n$. Following the Golub--Kahan case, for the fixed $l$, we write the Arnoldi projected systems
\begin{equation}\label{dis_Arnoldi_exact_data}
\tilde{\mathcal{T}}_{i,n}^l u_n = v_{i,n}, \quad i=1,\ldots,q,
\end{equation}
\begin{equation*}
\tilde{\mathcal{T}}_{i,n}^l u_n^\delta = v_{i,n}^\delta,\quad i=1,\ldots,q,
\end{equation*}
and assume that, for fixed discretization level $n$,
\begin{equation*}\label{Arnoldi_projected_solution_set}
\tilde{\mathcal S}_{n,l}:=\{u_n\in\operatorname{dom}(\mathfrak f_n):\tilde{\mathcal T}_{i,n}^{l}u_n=v_{i,n},\ i=1,\ldots,q\}
\end{equation*}
is non-empty. Consequently, we denote by $\tilde u_{n,l}^{\dagger}$ the $\mathfrak f_n$-minimizing solution of \eqref{dis_Arnoldi_exact_data}, i.e.,
\begin{equation*}\label{Arnoldi_projected_minimizer}
\tilde u_{n,l}^{\dagger}:=\arg\min\{\mathfrak f_n(u_n):u_n\in\tilde{\mathcal S}_{n,l}\}.
\end{equation*}
Utilizing the definitions of $\mathcal{T}_{i,n}^l$ and $\tilde{V}_{l+1,i}$, we observe that
\begin{equation*}
\tilde{\mathcal{T}}_{i,n}^{l*} (\gamma_k I + \tilde{\mathcal{T}}_{i,n}^l \tilde{\mathcal{T}}_{i,n}^{l*})^{-1} = \tilde{V}_{l,i} H_{l+1,l,i}^{*} (\gamma_k I + H_{l+1,l,i} H_{l+1,l,i}^{*})^{-1} \tilde{V}_{l+1,i}^{*}.
\end{equation*}
Since $1\leq l\ll n$, the inversion of $(\gamma_k I + H_{l+1,l,i} H_{l+1,l,i}^{*})$ takes place in a low-dimensional space $\mathbb{R}^{(l+1)\times(l+1)}$. Therefore, this  identity simplifies the evaluation of the operator on the left-hand side. Consequently, substituting $\tilde{\mathcal{T}}_{i,n}^l$ for $\mathcal{T}_{i,n}$ in \eqref{sol_via_disc} and applying this relation yields the proposed iterative scheme \eqref{Arnoldi_it_scheme_proposed}.
\begin{rema}
For each component equation, the \texttt{RIGKT} method requires the construction of two Krylov subspaces. In contrast, the \texttt{RIAT} method employs only a single Arnoldi Krylov subspace, represented by $\tilde V_{l+1,i}$. Consequently, for square systems, \texttt{RIAT} requires fewer basis vectors to be generated, orthogonalized, and stored. This one-sided Krylov construction may therefore significantly reduce both the computational cost and memory requirements, particularly when the Krylov dimension or the number of component equations is large.
\end{rema}

In the next section, we discuss the  implementation and convergence analysis of the \texttt{RIGKT} method. 


\section{Convergence Analysis}\label{section_con_analysis_G_K}
This section studies the convergence behavior of the proposed \texttt{RIGKT} method~\eqref{G_K_it_scheme_proposed}. First, we develop a residual-dependent step-size  based on an \emph{a posteriori} discrepancy principle. We then demonstrate the monotonicity of the iterates generated by the algorithm, finite termination of the method, and regularization attributes. To begin, we define the step-size rule for~\eqref{G_K_it_scheme_proposed} as follows. 

Let us denote 
\[A_{k,i}:=\gamma_kI+B_{l+1,l,i}B_{l+1,l,i}^*, \qquad h_{k,i}^\delta := (A_{k,i})^{-1/2}U_{l+1,i}^*(\mathcal{T}_{i,n}^lu_{n,k}^\delta-v_{{i},n}^\delta),\] 
and $\delta_{i,n}$ denotes the noise level for the discretized data, i.e., $\|v_{i,n}^\delta-v_{i,n}\| \leq \delta_{i,n}$.
Moreover, 
$i_k \in \{1, \ldots, q\}$ represents a random index drawn at iteration $k$.
Now, using these notation we define
\begin{equation}\label{step_size_Golub_Kahan}
t_k^\delta := 
\begin{cases} 
\min\left\{ \frac{\mu_0 \big\|h_{k,i_k}^\delta\big\|^2}{\big\|V_{l,i_k} B_{l+1,l,i_k}^{*}\big(A_{k,i_k}\big)^{-1/2}h_{k,i_k}^\delta\big\|^2},\; \mu_1 \right\} & \text{if}\;\; \gamma_k \big\|h_{k,i_k}^\delta\big\|^2 > \tau^2 \delta_{i_k,n}^2, \\
0 & \text{otherwise},
\end{cases}
\end{equation}
where the parameters $\mu_0>0, \; \mu_1>0,$ and $\tau>1$ are chosen such that 
\begin{equation}\label{parameter_condition}
\mathfrak{C}_0 := 1-\frac{1}{\tau}-\frac{\mu_0}{4\nu} >0,
\end{equation}
and the sequence $\{\gamma_j\}_{j\ge0}$ is chosen such that
\begin{equation}\label{sequence_condition}
\sum_{j=0}^{\infty}\frac{1}{\gamma_j}=\infty.
 \end{equation}
For notational simplicity, throughout this section we denote the total discretized noise by $\delta$. We also omit the discretization index $n$ from the noise level, writing $\delta_i$ instead of $\delta_{i,n}$, with the understanding that $\|v_{i,n}^{\delta} - v_{i,n}\| \le \delta_i$.\\  
We observe that, whenever $ \gamma_k \|h_{k,i_k}^\delta\|^2 > \tau^2 \delta_{i_k}^2$, the active step-size is bounded away from zero. Indeed, using the bounds \eqref{operator_bounds} and $\|V_{l,i_k}\|\leq1$, we obtain
\[
\|V_{l,i_k} B_{l+1,l,i_k}^{*}(A_{k,i_k})^{-1/2}h_{k,i_k}^\delta\|\; \leq\; \| B_{l+1,l,i_k}^{*}(A_{k,i_k})^{-1/2}h_{k,i_k}^\delta\|\; \leq \;\|h_{k,i_k}^\delta\|.
\]
Consequently,
\[\frac{\mu_0 \big\|h_{k,i_k}^\delta\big\|^2}{\big\|V_{l,i_k} B_{l+1,l,i_k}^{*}\big(A_{k,i_k}\big)^{-1/2}h_{k,i_k}^\delta\big\|^2}\geq\mu_0\] and hence every active update satisfies $t_k^\delta\geq \min\{\mu_0,\mu_1\}>0$. We use the above step-size selection rule in the following algorithm for implementing \texttt{RIGKT}. It is adapted from the framework developed in \cite{huang2025early} for the stochastic mirror-descent.
\begin{breakablealgorithm}
\caption{ \texttt{RIGKT} with \emph{a posteriori} stopping}
\label{algo:G_K_noisy_data}
\begin{algorithmic}[1]
\State Input: $\tau > 1$, $\mu_0 > 0$, $\mu_1 > 0$ and $\zeta_{n,0} \in \mathcal{U}_n$.
\State Compute $u_{n,0} = \arg\min_{u_n \in \mathcal{U}_n}\bigl\{\mathfrak{f}_n(u_n) - \langle \zeta_{n,0}, u_n\rangle\bigr\}$ and set
\Statex \hspace{\algorithmicindent} 
$\zeta_{n,0,0}^\delta = \zeta_{n,0}$,\quad
$u_{n,0,0}^\delta = u_{n,0}$
\For{$k = 0, 1,  \ldots,$} (\textbf{Outer loop})
\State Pick a fixed positive integer $m_k$.
\For{$m = 0, \ldots, m_k - 1$} (\textbf{Inner loop})
\State Draw $i_{k,m} \in \{1,\ldots,q\}$ uniformly at random and compute
\begin{align*}
h_{k,m}^\delta &= (A_{i_{k,m}})^{-1/2}
U_{l+1,i_{k,m}}^{*}(\mathcal{T}_{i_{k,m},n}^lu_{n,k,m}^\delta-v_{i_{k,m},n}^\delta),
\\
g_{k,m}^\delta &= 
V_{l,i_{k,m}} B_{l+1,l,i_{k,m}}^{*}
(A_{i_{k,m}})^{-1/2}h_{k,m}^\delta,
\end{align*}
\Statex \hspace{\algorithmicindent}
\hspace{0.5cm} where $A_{i_{k,m}}:=(\gamma_{k,m}I+ B_{l+1,l,i_{k,m}}B_{l+1,l,i_{k,m}}^*).$ 
\State Set step-size
\[
t_{k,m}^\delta =
\begin{cases}
\min\!\left\{
\dfrac{\mu_0\,\|h_{k,m}^\delta\|^2}
{\|g_{k,m}^\delta\|^2},\;
\mu_1
\right\}
& \text{if } \gamma_{k,m}\|h_{k,m}^\delta\|^2 
 > \tau^2\delta_{i_{k,m}}^2, \\[8pt]
0 & \text{otherwise.}
\end{cases}
\]
\State Set
$\zeta_{n,k,m+1}^\delta = \zeta_{n,k,m}^\delta 
- t_{k,m}^\delta g_{k,m}^\delta$,
and compute
\[
u_{n,k,m+1}^\delta = \arg\min_{u_n \in \mathcal{U}_n}
\bigl\{\mathfrak{f}_n(u_n) - \langle \zeta_{n,k,m+1}^\delta, 
u_n\rangle\bigr\}.
\]
\EndFor
\State Outer update: Set
$\zeta_{n,k+1}^\delta = \zeta_{n,k,m_k}^\delta$  \text{ and }
$u_{n,k+1}^\delta = u_{n,k,m_k}^\delta$.
\State Stopping check: \textbf{If}
\begin{equation}\label{DP_G_K}
\gamma_{k+1}
\bigl\|h_{k+1,i}^\delta\bigr\|^2
\leq \tau^2\delta_{i}^2
\quad \forall\, i \in \{1,\ldots,q\},
\end{equation}
\Statex \hspace{\algorithmicindent} then output $u_{n,k_\delta}^\delta:=u_{n,k+1}^\delta$ and stop
\Statex \hspace{\algorithmicindent} \textbf{else} compute
\begin{align*}
\Delta_{k+1}^\delta &=
\Bigl\{ i \in \{1,\ldots,q\} :
\gamma_{k+1}
\bigl\|h_{k+1,i}^\delta\bigr\|^2
> \tau^2\delta_{i}^2 \Bigr\},\\
g_{k+1}^\delta 
&= \sum_{i \in \Delta_{k+1}^\delta}
V_{l,i} B_{l+1,l,i}^{*}
\bigl(A_{k+1,i}\bigr)^{-1/2}h_{k+1,i}^\delta,\\
\Phi_{k+1}^\delta 
&= \sum_{i \in \Delta_{k+1}^\delta}
\bigl\|h_{k+1,i}^\delta\bigr\|^2.
\end{align*}
\State Outer step-size: Set
\[
t_{k+1}^\delta =
\begin{cases}
\min\!\left\{
\dfrac{\mu_0\,\Phi_{k+1}^\delta}
{\|g_{k+1}^\delta\|^2}, \; \mu_1
\right\}
& \text{if } \Delta_{k+1}^\delta \neq\emptyset,\\
0
& \text{if } \Delta_{k+1}^\delta = \emptyset.\\[6pt]
\end{cases}
\]
\State Outer update: Set
$\zeta_{n,k+1,0}^\delta = \zeta_{n,k+1}^\delta 
- t_{k+1}^\delta g_{k+1}^\delta$,
and compute
\[
u_{n,k+1,0}^\delta = \arg\min_{u_n \in \mathcal{U}_n}
\bigl\{\mathfrak{f}_n(u_n) - \langle \zeta_{n,k+1,0}^\delta, 
u_n\rangle\bigr\}.
\]
\EndFor
\end{algorithmic}
\end{breakablealgorithm}
The above algorithm consists of two phases, an inner loop and an outer loop. In the inner loop, it performs a prescribed $m_k$ number of updates, each using step-size according to rule \eqref{step_size_Golub_Kahan} with $k$ replaced by $(k,m)$ and $i_k$ replaced by $i_{k,m}$. Subsequently, the outer loop verifies the satisfaction of the discrepancy condition \eqref{DP_G_K}. If this condition is unmet, the algorithm identifies the indices for which it is violated and utilizes this information for the next update. This iterative process continues until \eqref{DP_G_K} is satisfied. We define the stopping index for the above algorithm as the  smallest outer-iteration index $k_\delta$ at which the stopping criterion \eqref{DP_G_K} is satisfied.
It should be noted that the quantities $\gamma_{k,m}$ and $\gamma_k$ appearing in Algorithm~\ref{algo:G_K_noisy_data} are taken from the same prescribed sequence $\{\gamma_j\}_{j\geq0}$. The double index $(k,m)$ denotes the $m$-th
inner randomized update within the $k$-th outer cycle. Thus, $\gamma_{k,m}$
denotes the element of the sequence $\{\gamma_j\}_{j\geq0}$ used at this inner
update. Similarly, $\gamma_k$ denotes the element used at the corresponding
outer update. More precisely, if $\mathfrak{z}_{k,m}$ and $\mathfrak{z}_k$ denote the total number of updates performed up to the inner step $(k,m)$ and the outer step $k$, respectively, then $\gamma_{k,m}:=\gamma_{\mathfrak{z}_{k,m}} \text{ and } \gamma_k:=\gamma_{\mathfrak{z}_k}.$\\
In the following subsection, we establish a monotonicity result for Algorithm~\ref{algo:G_K_noisy_data} and use it to demonstrate that the algorithm terminates after a finite number of steps.

\subsection{Monotonicity and finite termination property for \texttt{RIGKT}}
\begin{lemma}\label{lemma_mon_G_k}
Consider the iterative procedure in Algorithm~\ref{algo:G_K_noisy_data}. Let the parameters $\tau>1, \mu_0>0$ be chosen so that \eqref{parameter_condition} is satisfied. Let $\hat{u}_n \in \mathcal{S}_{n,l}$, then for all $k\leq k_\delta$ and $m=0,\ldots,m_k-1$, the following monotonicity estimates hold.
\begin{equation}\label{mono_G_K_algo_1}
\Gamma_{n,k,m+1}^\delta-\Gamma_{n,k,m}^\delta\leq-\mathfrak{C}_0t_{k,m}^\delta\|h_{k,m}^\delta\|^2\quad\text{and}\quad
\Gamma_{n,k+1,0}^\delta-\Gamma_{n,k+1}^\delta\leq-\mathfrak{C}_0t_{k+1}^\delta\Phi_{k+1}^\delta,
\end{equation}
where
$\Gamma_{n,k,m}^\delta := D_{\mathfrak{f}_n}^{\zeta_{n,k,m}^\delta}(\hat{u}_n,u_{n,k,m}^\delta)$ and $\Gamma_{n,k+1}^\delta:=D_{\mathfrak{f}_n}^{\zeta_{n,k+1}^\delta}(\hat{u}_n , u_{n,k+1}^\delta).$
\end{lemma}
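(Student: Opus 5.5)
The plan is the standard Bregman-distance descent argument for dual (mirror-type) iterations, applied separately to the inner and outer updates. Using the conjugate representation $D_{\mathfrak f_n}^{\zeta}(\hat u_n,u)=\mathfrak f_n(\hat u_n)+\mathfrak f_n^*(\zeta)-\langle \zeta,\hat u_n\rangle$ with $u=\nabla\mathfrak f_n^*(\zeta)$, and the first-order estimate \eqref{taylor_first_order_approximation_fenchel_dual}, every update $\zeta^+=\zeta-tg$ gives
\[
D^{\zeta^+}(\hat u_n,u^+)-D^{\zeta}(\hat u_n,u)\le -t\langle g,u-\hat u_n\rangle+\frac{t^2}{4\nu}\|g\|^2 .
\]
For the inner step, with $g=g_{k,m}^\delta$, I will write $g=\mathcal T_{i,n}^{l*}U_{l+1,i}(A_i)^{-1/2}h$ (since $V_{l,i}B^*_{l+1,l,i}=\mathcal T^{l*}_{i,n}U_{l+1,i}$). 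Then
\[
\langle g,u_{n,k,m}^\delta-\hat u_n\rangle=\langle h,(A_i)^{-1/2}U_{l+1,i}^*\mathcal T^l_{i,n}(u_{n,k,m}^\delta-\hat u_n)\rangle .
\]
Since $\hat u_n\in\mathcal S_{n,l}$, we have $\mathcal T^l_{i,n}\hat u_n=v_{i,n}$, so the right factor equals $h-(A_i)^{-1/2}U^*_{l+1,i}(v_{i,n}^\delta-v_{i,n})$. By \eqref{operator_bounds} and $\|U^*\|\le1$, this gives
\[
\langle g,u-\hat u_n\rangle\ge \|h\|^2-\gamma^{-1/2}\delta_i\|h\| .
\]

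For the noise term, when $t_{k,m}^\delta>0$ the activation condition gives $\gamma^{-1/2}\delta_i<\|h\|/\tau$, so the inner product is at least $(1-1/\tau)\|h\|^2$. The step-size rule gives $t\|g\|^2\le\mu_0\|h\|^2$, so $\tfrac{t^2}{4\nu}\|g\|^2\le \tfrac{\mu_0}{4\nu}t\|h\|^2$. Combining the two yields the bound $-\mathfrak C_0 t\|h\|^2$. When $t=0$ both sides vanish, which gives the first estimate in \eqref{mono_G_K_algo_1}.

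For the outer step, $g_{k+1}^\delta$ is a sum over $i\in\Delta_{k+1}^\delta$. Applying the same identity termwise, $\langle g_{k+1}^\delta,u_{n,k+1}^\delta-\hat u_n\rangle\ge\sum_{i\in\Delta}(\|h_{k+1,i}\|^2-\gamma_{k+1}^{-1/2}\delta_i\|h_{k+1,i}\|)\ge(1-1/\tau)\Phi_{k+1}^\delta$, using the defining inequality of $\Delta_{k+1}^\delta$ for each index. The quadratic term is bounded by $\tfrac{\mu_0}{4\nu}t_{k+1}\Phi_{k+1}$ via the outer step-size rule, and the empty-$\Delta$ case is trivial.

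The only delicate point is the identification $\langle g,u-\hat u_n\rangle$ through the adjoint relation of the Golub--Kahan factors. This uses $U^*_{l+1,i}U_{l+1,i}=I$, which gives $U_{l+1,i}^*\mathcal T^l_{i,n}=B_{l+1,l,i}V^*_{l,i}$, together with the fact that $\hat u_n$ solves the projected, not the original, system. Restricting to $k\le k_\delta$ just ensures all these quantities are defined.
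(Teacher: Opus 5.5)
Your proposal is correct and follows the paper's argument. Both rest on the conjugate bound $\Gamma^+-\Gamma\le\frac{1}{4\nu}\|\zeta^+-\zeta\|^2+\langle\zeta^+-\zeta,u-\hat u_n\rangle$, the adjoint rewriting using $\mathcal T^l_{i,n}\hat u_n=v_{i,n}$, Cauchy--Schwarz with $\|U^*_{l+1,i}\|\le1$ and $\|A^{-1/2}\|\le\gamma^{-1/2}$, the activation (resp.\ $\Delta_{k+1}^\delta$) condition, and the step-size bound $t\|g\|^2\le\mu_0\|h\|^2$ (resp.\ $\mu_0\Phi_{k+1}^\delta$); you rightly skip the paper's preliminary induction $\Gamma^\delta_{n,k,m}\le\Gamma_{n,0}$, which these one-step estimates do not need.

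One harmless slip: the right factor is $h+(A_i)^{-1/2}U^*_{l+1,i}(v^\delta_{i,n}-v_{i,n})$, with a plus sign, but you only use the norm of that perturbation, so your lower bound is unaffected.
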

\begin{proof} To prove the desired result, we first establish that the Bregman distance remains bounded by its initial value, i.e.,
\begin{equation}\label{mono_analogue_G_k}
\Gamma_{n,k,m}^\delta \leq \Gamma_{n,0}, \quad \forall \, k < k_\delta,\; m = 0, \ldots, m_k,
\end{equation}
where $\Gamma_{n,0} := D_{\mathfrak{f}_n}^{\zeta_{n,0}}(\hat{u}_n, u_{n,0})$. We prove this by induction on the iteration indices which requires verification of the following two conditions:
\begin{enumerate}[(i)]
\item If $\Gamma_{n,k,m}^\delta \leq \Gamma_{n,0}$ for some $0 \leq m < m_k$, then $\Gamma_{n,k,m+1}^\delta \leq \Gamma_{n,0}$.
\item If $\Gamma_{n,k}^\delta \leq \Gamma_{n,0}$ for some $k <  k_\delta$, then $\Gamma_{n,k,0}^\delta \leq \Gamma_{n,0}$.
\end{enumerate}
We start with the proof of condition (ii).
Utilizing the definition \eqref{bregman_formula} and the relation \eqref{subdifferential_relation}, we have
\begin{equation*}
\Gamma_{n,k,0}^\delta - \Gamma_{n,k}^\delta = \mathfrak{f}_n^*(\zeta_{n,k,0}^\delta) - \mathfrak{f}_n^*(\zeta_{n,k}^\delta) - \langle \zeta_{n,k,0}^\delta - \zeta_{n,k}^\delta, \hat{u}_n \rangle.
\end{equation*}
Since $\mathfrak{f}_n$ is  $\nu$-strongly convex, by the properties of Fenchel conjugate and \eqref{taylor_first_order_approximation_fenchel_dual}, it follows that 
\begin{align}\label{lemma_1_eqn_1} 
\nonumber
\Gamma_{n,k,0}^\delta - \Gamma_{n,k}^\delta &= \mathfrak{f}_n^*(\zeta_{n,k,0}^\delta) - \mathfrak{f}_n^*(\zeta_{n,k}^\delta) - \langle \zeta_{n,k,0}^\delta - \zeta_{n,k}^\delta, \nabla \mathfrak{f}_n^*(\zeta_{n,k}^\delta) \rangle - \langle \zeta_{n,k,0}^\delta - \zeta_{n,k}^\delta, -u_{n,k}^\delta + \hat{u}_n \rangle \\
&\leq \frac{1}{4\nu} \|\zeta_{n,k,0}^\delta - \zeta_{n,k}^\delta\|^2 + \langle \zeta_{n,k,0}^\delta - \zeta_{n,k}^\delta, u_{n,k}^\delta - \hat{u}_n \rangle.
\end{align}
Now, substituting the update rule  $\zeta_{n,k,0}^\delta = \zeta_{n,k}^\delta  -t_k^\delta g_k^\delta$ and using definitions of adjoint and $\mathcal{T}_{i,n}^l$, along with the relation $\mathcal{T}_{i,n}^l \hat{u}_n = v_{i,n}$, the inner product term of \eqref{lemma_1_eqn_1} expands as
\begin{align}\label{lemma_1_eqn_2}
\nonumber
\langle -t_k^\delta g_k^\delta, u_{n,k}^\delta - \hat{u}_n \rangle 
&= -t_k^\delta \bigg\langle \sum_{i \in \Delta_k^\delta} V_{l,i} B_{l+1,l,i}^{*}
(A_{k,i})^{-1/2}h_{k,i}^\delta, u_{n,k}^\delta-\hat{u}_n \bigg\rangle \\ \nonumber
&= -t_k^\delta \sum_{i\in \Delta_k^\delta}\big\langle h_{k,i}^\delta, (A_{k,i})^{-1/2}B_{l+1,l,i}V_{l,i}^*(u_{n,k}^\delta-\hat{u}_n)\big\rangle  \\ \nonumber
&= -t_k^\delta \sum_{i\in \Delta_k^\delta}\big\langle h_{k,i}^\delta, (A_{k,i})^{-1/2}U_{l+1,i}^*(\mathcal{T}_{i,n}^lu_{n,k}^\delta-v_{i,n})\big\rangle \\
&= -t_k^\delta \sum_{i \in \Delta_k^\delta} \|h_{k,i}^\delta\|^2 -t_k^\delta \sum_{i \in \Delta_k^\delta}\big\langle h_{k,i}^\delta, (A_{k,i})^{-1/2}U_{l+1,i}^* (v_{i,n}^\delta - v_{i,n}) \big\rangle .
\end{align}
Applying  Cauchy-Schwarz inequality together with the bounds $\| v_{i,n}^\delta - v_{i,n} \| \leq \delta_{i}$, $\|U_{l+1,i}^*\|\leq1$ and \ref{operator_bounds},
yields
\begin{equation}\label{lemma_1_eqn_3}
-\langle h_{k,i}^\delta, (A_{k,i})^{-1/2}U_{l+1,i}^* (v_{i,n}^\delta - v_{i,n}) \rangle \leq \frac{\delta_i}{\sqrt{\gamma_k}} \| h_{k,i}^\delta\|.
\end{equation}
Moreover, by definition of $\Delta_k^\delta$, we have $\frac{\delta_{i}}{\sqrt{\gamma_k}} < \frac{1}{\tau} \| h_{k,i}^\delta\|$. Using this inequality along with \eqref{lemma_1_eqn_3} and \eqref{lemma_1_eqn_2} together with the definition of $\Phi_k^\delta$ and the step-size  $t_k^\delta$ in \eqref{lemma_1_eqn_1}, we obtain
\begin{align*}
\Gamma_{n,k,0}^\delta - \Gamma_{n,k}^\delta &\leq \frac{(t_k^\delta)^2}{4\nu} \|g_k^\delta\|^2 - t_k^\delta \Phi_k^\delta + \frac{t_k^\delta}{\tau} \Phi_k^\delta \\
&\leq \frac{t_k^\delta \mu_0 \Phi_k^\delta}{4\nu} - t_k^\delta \Phi_k^\delta + \frac{t_k^\delta}{\tau} \Phi_k^\delta \\
&= - \left( 1 - \frac{1}{\tau} - \frac{\mu_0}{4\nu} \right) t_k^\delta \Phi_k^\delta\\
&= -\mathfrak{C}_0 t_k^\delta \Phi_k^\delta.
\end{align*}
This implies $\Gamma_{n,k,0}^\delta < \Gamma_{n,k}^\delta$. Using  the assumption $\Gamma_{n,k}^\delta < \Gamma_{n,0}$, it follows that $\Gamma_{n,k,0}^\delta \leq \Gamma_{n,k}^\delta \leq \Gamma_{n,0}$, which  proves the condition~(ii).\\
Repeating the same arguments as in the proof of~(ii), we establish condition~(i). Similar to \eqref{lemma_1_eqn_1}, we have 
\[\Gamma_{n,k,m+1}^\delta-\Gamma_{n,k,m}^\delta \leq \frac{1}{4\nu}\|\zeta_{n,k,m+1}^\delta-\zeta_{n,k,m}^\delta\|^2 + \langle \zeta_{n,k,m+1}^\delta - \zeta_{n,k,m}^\delta, u_{n,k,m}^\delta -\hat{u}_n \rangle.\]
The assertion in~(i) is immediate when \(t_{k,m}^\delta=0\). So assuming the case \(t_{k,m}^\delta>0\), we observe that
\[
\big\langle
\zeta_{n,k,m+1}^{\delta}-\zeta_{n,k,m}^{\delta},
u_{n,k,m}^{\delta}-\hat u_n
\big\rangle
\leq
-\left(1-\frac{1}{\tau}\right)
t_{k,m}^{\delta}\|h_{k,m}^{\delta}\|^2 .
\]
Consequently, using the definition of $t_{k,m}^{\delta}$, we obtain
\begin{align*}
\Gamma_{n,k,m+1}^{\delta}-\Gamma_{n,k,m}^{\delta}
&\leq
\frac{(t_{k,m}^{\delta})^2}{4\nu}\|g_{k,m}^{\delta}\|^2
-\left(1-\frac{1}{\tau}\right)
t_{k,m}^{\delta}\|h_{k,m}^{\delta}\|^2  \\
&\leq
-\left(1-\frac{1}{\tau}-\frac{\mu_0}{4\nu}\right)
t_{k,m}^{\delta}\|h_{k,m}^{\delta}\|^2\\
&=-\mathfrak C_0 t_{k,m}^{\delta}\|h_{k,m}^{\delta}\|^2 .
\end{align*}
Together with the inductive assumption $\Gamma_{n,k,m}^\delta \leq \Gamma_{n,0}$, this implies $\Gamma_{n,k,m+1}^\delta \leq \Gamma_{n,k,m}^\delta \leq \Gamma_{n,0}$, thereby establishing condition (i). Finally, given that \eqref{mono_analogue_G_k} has been established, the assertion in \eqref{mono_G_K_algo_1} follows directly from the steps used in proving conditions (i) and (ii).
\end{proof}
Based on the preceding lemma, we now establish finite-time termination property of Algorithm~\ref{algo:G_K_noisy_data}.

\begin{lemma}\label{lemma_finite_termination_G_K}
Let the parameters $\tau >1  \text{ and } \mu_0>0$ be chosen such that \eqref{parameter_condition} is satisfied. Then, Algorithm~\ref{algo:G_K_noisy_data} terminates in finitely many steps. 
\end{lemma}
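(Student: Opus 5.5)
Suppose, for contradiction, that Algorithm~\ref{algo:G_K_noisy_data} does not terminate, so $k_\delta=\infty$ and the stopping test \eqref{DP_G_K} fails at every outer index $k\geq1$. Then $\Delta_k^\delta\neq\emptyset$ for every $k\geq 1$, and the outer step size is active. The plan is to telescope the monotonicity estimates of Lemma~\ref{lemma_mon_G_k} over all inner and outer updates. This turns the decrease of the Bregman distance into a summable series, which we then contradict using the divergence condition \eqref{sequence_condition}.

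\textbf{Step 1: a summable series.} Fix any $\hat u_n\in\mathcal S_{n,l}$, which is possible since $\mathcal S_{n,l}$ is assumed non-empty. Since the Bregman distance is nonnegative, summing both inequalities in \eqref{mono_G_K_algo_1} over all $k$ and $m$ gives
\[
\mathfrak C_0\sum_{k\ge1} t_k^\delta\Phi_k^\delta+\mathfrak C_0\sum_{k\geq0}\sum_{m=0}^{m_k-1}t_{k,m}^\delta\|h_{k,m}^\delta\|^2\le \Gamma_{n,0}<\infty .
\]
Here we use that $\mathfrak C_0>0$ by \eqref{parameter_condition}, and that the chain of updates passes alternately through the inner iterates and the outer correction steps.

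\textbf{Step 2: lower bound for each outer term.} For each $k\ge1$ we bound $t_k^\delta\Phi_k^\delta$ from below. Using \eqref{operator_bounds} and $\|V_{l,i}\|\le 1$, each summand of $g_k^\delta$ has norm at most $\|h_{k,i}^\delta\|$. By Cauchy--Schwarz, $\|g_k^\delta\|^2\le q\,\Phi_k^\delta$, and hence $t_k^\delta\ge\min\{\mu_0/q,\mu_1\}=:c_0>0$. Since $\Delta_k^\delta\neq\emptyset$, there is an index $i$ with $\|h_{k,i}^\delta\|^2>\tau^2\delta_i^2/\gamma_k$, so $\Phi_k^\delta>\tau^2\delta_{\min}^2/\gamma_k$ with $\delta_{\min}:=\min_i\delta_i$. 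Consequently
\[
\sum_{k\ge1}\frac{c_0\tau^2\delta_{\min}^2}{\gamma_k}<\infty .
\]

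\textbf{Step 3: contradiction with \eqref{sequence_condition}, the main obstacle.} The condition \eqref{sequence_condition} concerns the full sequence $\gamma_j$. However, the outer steps use only the subsequence $\gamma_{\mathfrak z_k}$, while the inner steps use the remaining indices, where a step may be inactive. The cleanest fix I would try first is to note that, since the $m_k$ are fixed and presumably bounded by some $M$, and $\{\gamma_j\}$ is monotonically decreasing, every inner index $j\in(\mathfrak z_k,\mathfrak z_{k+1})$ satisfies $1/\gamma_j\le 1/\gamma_{\mathfrak z_{k+1}}$. This gives
\[
\sum_j\frac{1}{\gamma_j}\le (M+1)\sum_k\frac{1}{\gamma_{\mathfrak z_k}}<\infty,
\]
which contradicts \eqref{sequence_condition}. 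If bounded $m_k$ cannot be assumed, I would instead assume \eqref{sequence_condition} along the outer subsequence. In the degenerate case $\delta_{\min}=0$ (exact data for some component), I would replace the argument with the residual version: summability forces $\Phi_k^\delta\to0$, which combined with the active threshold gives the same contradiction whenever $\delta>0$. Either way the non-termination hypothesis fails, so $k_\delta<\infty$. The argument is pathwise, so the conclusion holds for every realization of the random indices.
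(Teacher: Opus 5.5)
Your Steps 1--2 are exactly the paper's proof. The paper also assumes non-termination, so that $\Delta_k^\delta\neq\emptyset$ for all $k\ge1$. It then telescopes Lemma~\ref{lemma_mon_G_k} to get $\sum_k t_k^\delta\Phi_k^\delta\le\Gamma_{n,0}/\mathfrak C_0$, bounds $\|g_k^\delta\|^2\le q\Phi_k^\delta$ to obtain $t_k^\delta\ge\min\{\mu_0/q,\mu_1\}$, and combines this with $\gamma_k\Phi_k^\delta>\tau^2\delta_{\min}^2$ to contradict the divergence of $\sum 1/\gamma_k$.

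In Step 3 you are right that the paper silently reads $\gamma_k$ as the outer value $\gamma_{\mathfrak z_k}$ and applies \eqref{sequence_condition} to it. However, the obstacle disappears under the monotonicity you already invoke. Since $\gamma_{\mathfrak z_k}\le\gamma_0$, you have $1/\gamma_{\mathfrak z_k}\ge1/\gamma_0$, so $\sum_k1/\gamma_{\mathfrak z_k}=\infty$ trivially. Equivalently, $\Phi_k^\delta>\tau^2\delta_{\min}^2/\gamma_0$ uniformly in $k$, which already contradicts $\Phi_k^\delta\to0$. So you do not need the bound $m_k\le M$, which this lemma does not assume (only Theorem~\ref{theorem_main_G_K} does).

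Your remark on the degenerate case $\delta_{\min}=0$ is wrong and should be dropped. If $\delta_i=0$ for some $i$, the activity test for that component reduces to $\|h_{k,i}^\delta\|>0$. That is perfectly compatible with $\Phi_k^\delta\to0$, e.g.\ with $\Delta_k^\delta=\{i\}$ for every $k$, so summability yields no contradiction even when the total noise $\delta>0$. Nothing then forces termination: stopping would require $U_{l+1,i}^*(\mathcal T_{i,n}^l u_{n,k}^\delta-v_{i,n}^\delta)$ to vanish exactly at some finite step. Both your argument and the paper's need the standing assumption $\delta_i>0$ for every $i$, and you should state it rather than claim the degenerate case is covered.
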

\begin{proof}
It is sufficient to prove that, for every fixed sample path $\{i_{k,m} : k \geq 0,\; 0 \leq m < m_k \}$, there exists a finite non-negative  integer $k_\delta$ for which the following holds.
\[\gamma_{k_\delta}
\bigl\|h_{k_\delta,i}^\delta\bigr\|^2
\leq \tau^2\delta_{i}^2
\quad \forall\, i \in \{1,\ldots,q\}.\]
We prove this by contradiction. Let us suppose that no such $k_\delta$ exists. Then, the set $\Delta_{k}^\delta$ is non-empty for every $k \geq 1$. Consequently, there must exist an index $i_k \in \Delta_k^\delta$, such that
\(\gamma_k\|h_{k,i_k}^\delta\|^2 > \tau^2 \delta_{i_k}^2.\)
Therefore, using the definition of $\Phi_k^\delta$, for each integer $k \geq 1$, we have
\begin{equation}\label{G_k_termination_1}
\gamma_{k} \Phi_k^\delta \geq \gamma_{k} \left\| h_{k,i_k}^\delta \right\|^2 > \tau^2 \delta_{i_k}^2 \geq \tau^2 \delta_{\min}^2,
\end{equation}
where $\delta_{\min} := \min \{\delta_{1}, \ldots, \delta_{q}\}$.  From the monotonicity estimates~\eqref{mono_G_K_algo_1}, for every $k \geq 1$ we observe that \[\Gamma_{n,k+1}^\delta - \Gamma_{n,k}^\delta = (\Gamma_{n,k,0}^\delta - \Gamma_{n,k}^\delta) + \sum_{m=0}^{m_k-1} (\Gamma_{n,k,m+1}^\delta - \Gamma_{n,k,m}^\delta) \leq -\mathfrak{C}_0 t_k^\delta \Phi_k^\delta - \mathfrak{C}_0t_{k,m}^\delta\|h_{k,m}^\delta\|^2 \leq -\mathfrak{C}_0 t_{k}^\delta \Phi_k^\delta   . \]
Therefore, for any fixed integer $N\geq1$,
\[ \mathfrak{C}_0 \sum_{k=1}^{N} t_{k}^\delta \Phi_{k}^\delta \leq \sum_{k=1}^{N} (\Gamma_{n,k}^\delta -\Gamma_{n,k+1}^\delta) =\Gamma_{n,1}^\delta -\Gamma_{n,N+1}^\delta \leq \Gamma_{n,1}^\delta \leq\Gamma_{n,0}^\delta=\Gamma_{n,0}<\infty.\]
Consequently, taking the limit  $N \to \infty$, we obtain
\begin{equation}\label{G_k_termination_2}
\sum_{k=1}^{\infty} t_{k}^\delta \Phi_{k}^\delta < \frac{\Gamma_{n,0}}{\mathfrak{C}_0}<\infty.
\end{equation}
Now, we show that whenever $\Delta_k^\delta \neq \emptyset$, the step-size $t_k^\delta$ is bounded away from zero. By the definition of $t_k^\delta$, we have $t_k^\delta = \min \{ \mu_0 \Phi_k^\delta / \|g_k^\delta\|^2, \mu_1 \}$. Applying Cauchy--Schwarz inequality and the operator norm bounds $\|V_{l,i}\| \leq 1$ and \eqref{operator_bounds}, we can bound $\|g_k^\delta\|^2$ as 
\begin{align*}
\|g_k^\delta\|^2 &= \bigg\| \sum_{i \in \Delta_{k}^\delta} V_{l,i}B_{l+1,l,i}^{*}(A_{k,i})^{-1/2} h_{k,i}^\delta \bigg\|^2 \leq \bigg(\sum_{i \in \Delta_{k}^\delta} \| h_{k,i}^\delta \|\bigg)^{2} \leq q \sum_{i\in \Delta_k^\delta}\| h_{k,i}^\delta\|^2 =q\Phi_k^\delta.
\end{align*}
This shows that there exists a positive constant $\bar{\mu} := \min \left\{\frac{\mu_0}{q}, \mu_1\right\} > 0$ such that $t_k^\delta \geq \bar{\mu}$. Combining this with \eqref{G_k_termination_1}, we get 
\begin{equation*}
\sum_{k=1}^N t_{k}^\delta \Phi_{k}^\delta \geq \sum_{k=1}^{N}\frac{t_k^\delta\tau^2\delta_{\min}^2}{\gamma_k}\geq \bar{\mu} \tau^2 \delta_{\min}^2 \sum_{k=1}^N \frac{1}{\gamma_k}.
\end{equation*}
Under assumption~\ref{sequence_condition}, the right-hand side of above inequality diverges as $N \to \infty$ contradicting the finite summability established in \eqref{G_k_termination_2}. Thus, we conclude that Algorithm~\ref{algo:G_K_noisy_data} terminates in finitely many steps.
\end{proof}

The above lemma establishes that Algorithm~\ref{algo:G_K_noisy_data} is well defined and always terminates after a finite number of steps, yielding a finite stopping index $k_{\delta}$. Since this index depends on the realized sequence of random indices, it is itself a random variable. We are now ready to examine the regularization properties of iterates generated by Algorithm~\ref{algo:G_K_noisy_data}.

\subsection{Regularization property of \texttt{RIGKT}}
In this subsection, our aim is to show that,  as the noise level $\delta\to0$, $u_{n,k_\delta}^\delta$ converges almost surely to the projected minimizer $u_{n,l}^{\dagger}$ defined in \eqref{GK_projected_minimizer}. For this, we will follow the following strategy: 
\begin{itemize}
\item Formulate the exact-data counterpart for Algorithm~\ref{algo:G_K_noisy_data} and establish its almost sure convergence (\emph{Convergence in exact-data case}).
\item Connect Algorithm~\ref{algo:G_K_noisy_data} to its exact-data counterpart and establish a stability result which holds along each sample path (\emph{Pathwise stability}).
\item Utilize these to establish regularization property for Algorithm~\ref{algo:G_K_noisy_data}  (\emph{Regularization property}).
\end{itemize}
We begin by defining the exact-data counterpart of Algorithm~\ref{algo:G_K_noisy_data}.

\begin{breakablealgorithm}
\caption{\texttt{RIGKT} with exact-data}
\label{algo:G_K_exact_data_exact}
\begin{algorithmic}[1]
\State Input: $\mu_0 > 0$, $\mu_1 > 0$ and $\zeta_{n,0} \in \mathcal{U}_n$.
\State Compute $u_{n,0} = \arg\min_{u_n \in \mathcal{U}_n}
\bigl\{\mathfrak{f}_n(u_n) - \langle \zeta_{n,0}, u_n\rangle\bigr\}$ and set
\Statex \hspace{\algorithmicindent}
$\zeta_{n,0,0} = \zeta_{n,0}$,\quad
$u_{n,0,0} = u_{n,0}$.
\For{$k = 0, 1, \ldots,$} (\textbf{Outer loop})
\State Pick a fixed positive integer $m_k$.
\For{$m = 0,\ldots, m_k - 1$} (\textbf{Inner loop})
\State Draw $i_{k,m} \in \{1,\ldots,q\}$ uniformly at random and compute
\begin{align*}
h_{k,m}
&= (A_{i_{k,m}})^{-1/2}
U_{l+1,i_{k,m}}^{*}(\mathcal{T}_{i_{k,m},n}^{l}u_{n,k,m}
- v_{i_{k,m},n}),
\\
g_{k,m}
&=
V_{l,i_{k,m}} B_{l+1,l,i_{k,m}}^{*}
(A_{i_k,m})^{-1/2} h_{k,m}.
\end{align*}

\State Set step-size
\[
t_{k,m} =
\begin{cases}
\min\!\left\{
\dfrac{\mu_0\,\|h_{k,m}\|^2}
{\|g_{k,m}\|^2},\;
\mu_1
\right\}
& \text{if } h_{k,m} \neq 0, \\[8pt]
0 & \text{otherwise.}
\end{cases}
\]
\State Set
$\zeta_{n,k,m+1} = \zeta_{n,k,m} - t_{k,m}g_{k,m}$,
and compute
\[
u_{n,k,m+1}
= \arg\min_{u_n \in \mathcal{U}_n}
\bigl\{\mathfrak{f}_n(u_n)
- \langle \zeta_{n,k,m+1},u_n\rangle\bigr\}.
\]
\EndFor
\State Outer update: Set $\zeta_{n,k+1} = \zeta_{n,k,m_k} \text{ and }
u_{n,k+1} = u_{n,k,m_k}.$
\State Stopping check: \textbf{If}
\[
\bigl\|h_{k+1,i}\bigr\| = 0
\quad \forall\, i \in \{1,\ldots,q\},
\]
\Statex \hspace{\algorithmicindent} then output $u_{n,k+1}$ and stop
\Statex \hspace{\algorithmicindent} \textbf{else} compute
\begin{align*}
\Delta_{k+1}
&=
\Bigl\{ i \in \{1,\ldots,q\} :
\bigl\|h_{k+1,i}\bigr\|^2 > 0 \Bigr\},
\\
g_{k+1}
&=
\sum_{i \in \Delta_{k+1}}
V_{l,i} B_{l+1,l,i}^{*}
\bigl(A_{k+1,i})^{-1/2} h_{k+1,i},
\\
\Phi_{k+1}
&=
\sum_{i \in \Delta_{k+1}}
\bigl\|h_{k+1,i}\bigr\|^2,
\end{align*}
\Statex \hspace{\algorithmicindent} where $h_{k,i}
:=
\bigl(A_{k,i})^{-1/2}
U_{l+1,i}^{*}
\bigl(\mathcal{T}_{i,n}^{l}u_{n,k}
- v_{i,n}\bigr).$
\State Outer step-size: Set
\[
t_{k+1} =
\begin{cases}
\min\!\left\{
\dfrac{\mu_0\,\Phi_{k+1}}
{\|g_{k+1}\|^2},\;
\mu_1
\right\}
& \text{if } \Delta_{k+1} \neq \emptyset, \\[8pt]
0
& \text{if } \Delta_{k+1} = \emptyset.
\end{cases}
\]
\State Outer update: Set $\zeta_{n,k+1,0}
= \zeta_{n,k+1} - t_{k+1}g_{k+1}$,
and compute
\[u_{n,k+1,0}
=
\arg\min_{u_n \in \mathcal{U}_n}
\bigl\{\mathfrak{f}_n(u_n)
- \langle \zeta_{n,k+1,0},u_n\rangle\bigr\}.\]
\EndFor
\end{algorithmic}
\end{breakablealgorithm}

We will establish that the iterates $\{u_{n,k}\}$ generated by the above algorithm, converge almost surely to a $\operatorname{dom}({\mathfrak{f}_n})$ valued random variable $u_n^*$ satisfying \eqref{dis_G_K_mod_eq_exact_da}. We refer to $u_n^*$ a random solution of \eqref{dis_G_K_mod_eq_exact_da}. To make this precise, we first introduce the 
underlying probability space on which the iterates are defined. Let $\Pi := \{1, \ldots, q\}$ denote the index set and 
$\mathcal{A}$ be the $\sigma$-algebra containing all subsets of 
$\Pi$. At each inner iteration of 
Algorithm~\ref{algo:G_K_exact_data_exact}, an index is drawn uniformly from $\Pi$. Thus, after $\sum_{j=0}^{k-1}m_j+m$ random selections, the variables $u_{n,k,m+1}$ and $\zeta_{n,k,m+1}$ depend only
on the corresponding history of sampled indices. Therefore they are random variables on the probability space $(\Pi_{k,m}, \mathcal{A}_{k,m}, \mathbb{P}_{k,m})$, where
\[\Pi_{k,m} := \underbrace{\Pi \times \cdots \times \Pi}_{\sum_{j=0}^{k-1} m_j \;+\; m\;\;\text{copies}} ,\;\; \mathcal{A}_{k,m} 
:= \underbrace{\mathcal{A} \otimes \cdots \otimes \mathcal{A}}_{\sum_{j=0}^{k-1} m_j \;+\; m\;\;\text{copies}}\]
and $\mathbb{P}_{k,m}$ is the uniform probability measure. Note that the  updates $u_{n,k+1,0}$ and $\zeta_{n,k+1,0}$ are obtained deterministically from the iterates at the end of the $k$-th inner cycle. Hence they introduce no additional randomness beyond what is already present at step 
$(k, m_k - 1)$ and are therefore measurable with respect to $\mathcal{A}_{k,m_k-1}$. The finite-dimensional measures constructed in this way are consistent, consequently the Kolmogorov extension theorem \cite{bhattacharya2007basic} ensures the existence of a unique probability 
measure $\mathbb{P}$ on the measurable space
$(\Omega, \mathcal{F}) 
:= (\Pi^{\infty},\, \mathcal{A}^{\otimes \infty})$, whose finite-dimensional marginals coincide with the measure $\mathbb{P}_{k,m}$. Hence, all random variables arising in Algorithm~\ref{algo:G_K_exact_data_exact} can be
realized on the common probability space $(\Omega,\mathcal{F},\mathbb{P})$ and the notion of almost sure convergence is understood with respect to this space.\\
The following result explicitly constructs a measurable event of probability one, on which the convergence analysis of 
Algorithm~\ref{algo:G_K_exact_data_exact} is carried out. 

\begin{lemma}\label{lemma_prob1_G_K}
Consider Algorithm~\ref{algo:G_K_exact_data_exact} and suppose the parameter $\mu_0>0$ is chosen so that 
\begin{equation}\label{parameter_condition_2}
\mathfrak{C}_1 := 1 - \frac{\mu_0}{4\nu} > 0.
\end{equation}
Let $u_n^*\in\mathcal S_{n,l}$. Then, for all integers $k \geq 0$ and $m = 0, 1, \ldots, m_k-1$, the following monotonicity property holds
\begin{equation}\label{mono_G_K_algo_2}
\Gamma_{n,k+1} \leq \Gamma_{n,k,m+1} \leq \Gamma_{n,k,m} \leq \Gamma_{n,k},
\end{equation}
where $\Gamma_{n,k,m} := D_{\mathfrak{f}_n}^{\zeta_{n,k,m}}(u_n^*, u_{n,k,m}) \text{ and } \Gamma_{n,k} := D_{\mathfrak{f}_n}^{\zeta_{n,k}}(u_n^*, u_{n,k}).$
Furthermore, the event 
\begin{equation}\label{Prob_one_event}
\begin{aligned}
\Omega_0 := \Bigg\{ \sum_{k=0}^{\infty} \Bigg( & \sum_{m=0}^{m_k-1} \sum_{i=1}^{q}\|h_{k,m,i}\|^2  + \sum_{i=1}^{q} \left\|h_{k+1,i} \right\|^2 \Bigg) < \infty \Bigg\}\\
\end{aligned}
\end{equation}
has probability one, i.e., $\mathbb{P}(\Omega_0) = 1$, where 
\begin{equation*}
h_{k,m,i}:=(\gamma_{k,m}I+B_{l+1,l,i}B_{l+1,l,i}^*)^{-1/2} U_{l+1,i}^{*} (\mathcal{T}_{i,n}^l u_{n,k,m} - v_{i,n}).
\end{equation*}
\end{lemma}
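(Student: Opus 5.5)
The plan is to argue exactly as in Lemma~\ref{lemma_mon_G_k}, now with $\delta=0$. We first get the pathwise monotonicity \eqref{mono_G_K_algo_2} from a one-step Bregman estimate. We then obtain summability of the outer residuals deterministically, and summability of the inner residuals in expectation via conditioning on the sampled history.

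\emph{Step 1 (monotonicity).} Fix a sample path and an element $u_n^*\in\mathcal S_{n,l}$. As in \eqref{lemma_1_eqn_1}, the Fenchel-conjugate representation of the Bregman distance and \eqref{taylor_first_order_approximation_fenchel_dual} give
\[
\Gamma_{n,k,m+1}-\Gamma_{n,k,m}\le \tfrac{1}{4\nu}\|\zeta_{n,k,m+1}-\zeta_{n,k,m}\|^2+\langle \zeta_{n,k,m+1}-\zeta_{n,k,m},u_{n,k,m}-u_n^*\rangle .
\]
Substitute $\zeta_{n,k,m+1}-\zeta_{n,k,m}=-t_{k,m}g_{k,m}$. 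Use the adjoint relations together with $\mathcal T^l_{i,n}=U_{l+1,i}B_{l+1,l,i}V_{l,i}^*$ and $\mathcal T^l_{i,n}u_n^*=v_{i,n}$. The inner product then equals exactly $-t_{k,m}\|h_{k,m}\|^2$; there is no noise term, so no factor $1/\tau$ appears. The step-size definition gives $t_{k,m}^2\|g_{k,m}\|^2\le \mu_0 t_{k,m}\|h_{k,m}\|^2$, and hence
\[
\Gamma_{n,k,m+1}-\Gamma_{n,k,m}\le -\mathfrak C_1 t_{k,m}\|h_{k,m}\|^2 .
\]
The identical computation for the outer update, with $g_{k}$ and $\Phi_{k}$, gives
\[
\Gamma_{n,k,0}-\Gamma_{n,k}\le-\mathfrak C_1 t_k\Phi_k .
\]
Chaining these with $\Gamma_{n,k+1}=\Gamma_{n,k,m_k}$ yields \eqref{mono_G_K_algo_2}. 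Telescoping then gives, pathwise,
\[
\mathfrak C_1\sum_{k}\Big(t_k\Phi_k+\sum_{m=0}^{m_k-1}t_{k,m}\|h_{k,m}\|^2\Big)\le \Gamma_{n,0}<\infty .
\]

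\emph{Step 2 (outer terms, deterministic).} As in the proof of Lemma~\ref{lemma_finite_termination_G_K}, we have $\|g_k\|^2\le q\Phi_k$, so $t_k\ge\bar\mu=\min\{\mu_0/q,\mu_1\}$ whenever $\Delta_k\neq\emptyset$. Since $\Delta_{k+1}$ collects exactly the indices with $h_{k+1,i}\neq0$, we get $\sum_{i=1}^q\|h_{k+1,i}\|^2=\Phi_{k+1}$. Therefore $\sum_k\sum_i\|h_{k+1,i}\|^2\le \Gamma_{n,0}/(\mathfrak C_1\bar\mu)$ on every path.

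\emph{Step 3 (inner terms, the main obstacle).} Only the sampled index appears in the telescoped bound, whereas $\Omega_0$ involves all $q$ indices. To bridge this, let $\mathcal F_{k,m}$ be the $\sigma$-algebra generated by the indices drawn before step $(k,m)$. Then $u_{n,k,m}$ is $\mathcal F_{k,m}$-measurable, the outer updates being deterministic functions of the past, while $i_{k,m}$ is uniform and independent of $\mathcal F_{k,m}$.

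Every active inner step has $t_{k,m}\ge\min\{\mu_0,\mu_1\}$, and when $h_{k,m}=0$ the term vanishes. Hence $t_{k,m}\|h_{k,m}\|^2\ge\min\{\mu_0,\mu_1\}\|h_{k,m}\|^2$, and
\[
\mathbb E\big[\|h_{k,m}\|^2\mid\mathcal F_{k,m}\big]=\tfrac1q\sum_{i=1}^q\|h_{k,m,i}\|^2 .
\]
Taking expectations in the pathwise bound, and using the tower property and monotone convergence for the nonnegative series, gives
\[
\mathbb E\Big[\sum_k\sum_{m}\sum_{i}\|h_{k,m,i}\|^2\Big]\le \frac{q\,\Gamma_{n,0}}{\mathfrak C_1\min\{\mu_0,\mu_1\}}<\infty .
\]
A nonnegative random variable with finite expectation is finite almost surely, so the inner part of the series in $\Omega_0$ is a.s.\ finite. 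Combined with Step 2, this gives $\mathbb P(\Omega_0)=1$.

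The delicate points are the measurability and independence structure across the outer/inner interleaving. The bound must also hold for a fixed deterministic $u_n^*$ so that $\Gamma_{n,0}$ is a constant. Note that $\Omega_0$ itself does not depend on the choice of $u_n^*$.
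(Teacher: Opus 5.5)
Your proposal is correct and follows essentially the same route as the paper. Both arguments use the one-step Bregman estimate with constant $\mathfrak C_1$, the uniform lower bound on active step sizes, and the identity $\mathbb E[\|h_{k,m}\|^2\mid\mathcal F_{k,m}]=\frac1q\sum_{i=1}^q\|h_{k,m,i}\|^2$, followed by telescoping and monotone convergence. Two small differences are worth noting: you bound the outer terms $\sum_k\Phi_{k+1}$ pathwise rather than in expectation, which is a harmless and slightly stronger variant, and you state explicitly that $u_n^*$ must be deterministic so that $\Gamma_{n,0}$ is a constant, a point the paper uses without comment.
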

\begin{proof}
Following the proof of monotonicity estimates in Lemma~\ref{lemma_mon_G_k} for the exact-data case, for every $k\geq0$ and $m=0,\ldots,m_k-1$, we have
\[\Gamma_{n,k,m+1}-\Gamma_{n,k,m}\leq-\mathfrak{C}_1 t_{k,m} \|h_{k,m}\|^2 \; \text{ and }\; \Gamma_{n,k+1,0} - \Gamma_{n,k+1}\leq-\mathfrak{C}_1 t_{k+1}\Phi_{k+1}.\]
Moreover, using analogous arguments as in Lemma~\ref{lemma_finite_termination_G_K}, for $\bar\mu=\min \left\{\frac{\mu_0}{q}, \mu_1\right\} > 0$, we also have 
\[-t_{k,m}\|h_{k,m}\|^2\leq-\bar\mu\|h_{k,m}\|^2 \;\text{ and }\; -t_{k+1}\Phi_{k+1}\leq -\bar\mu\Phi_{k+1}.\]
Therefore, for every $k\geq0$ and $m=0,\ldots,m_k-1,$ we obtain
\begin{equation}\label{mono_G_K_algo_3}
\Gamma_{n,k,m+1}-\Gamma_{n,k,m}\leq-\mathfrak{C}_1\bar{\mu}  \|h_{k,m}\|^2 \quad \text{and}
\end{equation}
\begin{equation}\label{mono_G_K_algo_3,1}
\Gamma_{n,k+1,0}-\Gamma_{n,k+1}\leq - \mathfrak{C}_1\bar{\mu}\Phi_{k+1}.
\end{equation}
Consequently, the inequalities 
\[\Gamma_{n,k,m+1}\le \Gamma_{n,k,m} \quad \text{and} \quad \Gamma_{n,k,0}\le \Gamma_{n,k}\]
together with the observation $\Gamma_{n,k+1}=\Gamma_{n,k,m_k}$ establishes \eqref{mono_G_K_algo_2}.
It remains to prove that $\mathbb{P}(\Omega_0)=1$. For this consider the filtration $\{\mathcal{F}_{k,m}:k\geq0 \;\text{ and }\; 0\leq m<m_k\}$ defined by 
\[\mathcal{F}_{k,m}:=\sigma\big(\{i_{r,s} : 0\leq r<k \;\;,\; 0\leq s<m_r\} \cup\{i_{k,s} : 0\leq s<m\}\big).\]
That is, $\mathcal{F}_{k,m}$ denotes the $\sigma$-algebra generated by all random indices chosen before the inner step $(k,m)$.
Since the sequence $\{\Gamma_{n,k,m}\}$ is adapted to the filtration $\{\mathcal{F}_{k,m}\}$ and $i_{k,m}$ is uniformly distributed in $\{1,\ldots,q\}$, taking expectation and using the tower property of conditional expectation in \eqref{mono_G_K_algo_3}, we have 
\begin{equation*}\label{Prob_1_event_1}
\begin{aligned}
\mathbb{E}[\Gamma_{n,k,m+1}]-\mathbb{E}[\Gamma_{n,k,m}]&
\leq -\mathfrak{C}_1\bar\mu\,
    \mathbb E[\|h_{k,m}\|^2]\\
&=- \mathfrak{C}_1 \bar{\mu} \mathbb{E}\left[\mathbb{E}\left[\|h_{k,m}\|^2| \mathcal{F}_{k,m}\right]\right]\\
&= -\mathfrak{C}_1 \bar{\mu}\mathbb{E}\bigg[\sum_{i=1}^{q}\|(\gamma_{k,m}+B_{l+1,l,i}B_{l+1,l,i}^*)^{-1/2}U_{l+1,i}^{*}(\mathcal{T}_{i,n}^l u_{n,k,m} - v_{i,n})\|^2 \times \frac{1}{q}\bigg] \\
&= -\frac{\mathfrak{C}_1\bar{\mu}}{q}\mathbb{E}\bigg[\sum_{i=1}^{q}\|h_{k,m,i}\|^2\bigg]\\
&= -\frac{\mathfrak{C}_1\bar{\mu}}{q}\mathbb{E}[S_{k,m}],
\end{aligned}
\end{equation*}
where $S_{k,m}:=\sum_{i=1}^{q}\|h_{k,m,i}\|^2$.
For fixed $k$,   taking sum over $m=0,\ldots, m_k-1$ in the above inequality, yields 
\begin{equation}\label{Prob_1_event_2}
\sum_{m=0}^{m_k-1}\big(\mathbb{E}[\Gamma_{n,k,m+1}]-\mathbb{E}[\Gamma_{n,k,m}]\big)=\mathbb{E}[\Gamma_{n,k+1}] - \mathbb{E}[\Gamma_{n,k,0}] \leq -\frac{\mathfrak{C}_1\bar{\mu}}{q}\sum_{m=0}^{m_k-1}\mathbb{E}[S_{k,m}].
\end{equation}
Moreover, by \eqref{mono_G_K_algo_3,1} we also have 
\begin{equation}\label{h_1}
    \mathbb{E}[\Gamma_{n,k+1,0}]-\mathbb{E}[\Gamma_{n,k+1}]\leq - \mathfrak{C}_1\bar{\mu}\mathbb{E}[\Phi_{k+1}] \leq \frac{-\mathfrak{C}_1\bar{\mu}}{q}\mathbb{E}[\Phi_{k+1}].
\end{equation}
Adding \eqref{h_1} to \eqref{Prob_1_event_2}, we get
\[\mathbb{E}[\Gamma_{n,k+1,0}] - \mathbb{E}[\Gamma_{n,k,0}] \leq - \frac{\mathfrak{C}_1\bar{\mu}}{q} \left( \sum_{m=0}^{m_k-1} \mathbb{E}[S_{k,m}] + \mathbb{E}[\Phi_{k+1}] \right).\]
Thus, for every fixed positive integer $N$,
\begin{align*}
\frac{\mathfrak{C}_1\bar{\mu}}{q} \sum_{k=0}^{N} \left( \sum_{m=0}^{m_k-1} \mathbb{E}[S_{k,m}] + \mathbb{E}[\Phi_{k+1}] \right) &\leq \sum_{k=0}^{N}\big(\mathbb{E}[\Gamma_{n,k,0}]-\mathbb{E}[\Gamma_{n,k+1,0}]\big) \\
& \leq \mathbb{E}[\Gamma_{n,0,0}] - \mathbb{E}[\Gamma_{n,N+1,0}]   \leq \mathbb{E}[\Gamma_{n,0,0}] = \Gamma_{n,0,0}<\infty.
\end{align*}
Letting $N\to\infty$, by  monotone convergence theorem it follows that 
\[\frac{\mathfrak{C}_1\bar{\mu}}{q}\mathbb{E}\left[\sum_{k=0}^{\infty}\left\{\sum_{m=0}^{m_k-1}S_{k,m} + \Phi_{k+1} \right\} \right] <\infty \text{ and hence } \mathbb{E}\left[\sum_{k=0}^{\infty}\left\{\sum_{m=0}^{m_k-1} S_{k,m} + \Phi_{k+1}\right\}\right] <\infty.\]
Consequently,
\[\sum_{k=0}^{\infty}\left\{\sum_{m=0}^{m_k-1} S_{k,m} + \Phi_{k+1}\right\}<\infty \text{ almost surely.}\]
By virtue of Algorithm~\ref{algo:G_K_exact_data_exact}, we note that $\Phi_{k+1} = \sum_{i=1}^{q}\|h_{k+1,i}\|^2$. Therefore we conclude that $\mathbb{P}(\Omega_0)=1$. This completes the proof.
\end{proof}

We next establish the almost sure convergence of Algorithm~\ref{algo:G_K_exact_data_exact}. The proof relies on the  following general convergence result. For details, see \cite{jin2014fast,jin2013landweber}.

\begin{proposition}\label{proposition_G_K}
Let $\mathfrak{f}_n:\mathcal{U}_n\to\mathbb{R}\cup\{+\infty\}$ be a proper, lower semi-continuous and strongly convex functional. Moreover, assume that $\{u_{n,k}\}\subset \operatorname{dom}(\mathfrak{f}_n)$ and $\{\zeta_{n,k}\}\subset \mathcal{U}_n$ be such that the following holds.
\begin{enumerate}[(i)]
\item For all $k\geq0$, $\zeta_{n,k}\in\partial \mathfrak{\mathfrak{f}}_n(u_{n,k})$.
\item For any $\hat{u}_n\in \mathcal S_{n,l}$, the sequence $\big\{D_{\mathfrak{f}_n}^{\zeta_{n,k}}(\hat{u}_n, u_{n,k})\big\}$ is convergent.
\item For each  $i=1,\ldots,q$,   $\lim_{k\to\infty}\|\mathcal{T}_{i,n}^l u_{n,k}- v_{i,n}\|=0$. 
\item  There exists a subsequence $\{k_w\}$ of positive integers with $k_w\to \infty$ as $w\to\infty$, such that for any $\hat{u}_n \in \mathcal S_{n,l}$, 
\begin{equation}
\lim_{z\to\infty}\sup_{w\geq z}|\langle\zeta_{n,k_w}-\zeta_{n,k_z},u_{n,k_w}-\hat{u}_n\rangle|=0.
\end{equation}
\end{enumerate}
Then, there exists $u_n^* \in \mathcal S_{n,l}$ such that 
\[\lim_{k\to \infty} D_{\mathfrak{f}_n}^{\zeta_{n,k}}(u_n^*,u_{n,k})=0.\]
Moreover, if in addition $\zeta_{n,k+1}-\zeta_{n,k} \in \operatorname{Ran}(\mathcal{T}_{1,n}^{l*}) \oplus \cdots \oplus \operatorname{Ran}(\mathcal{T}_{q,n}^{l*})$ for all k, then $u_n^* = u_{n,l}^{\dagger}$, where $u_{n,l}^{\dagger}$ is the unique $\mathfrak{f}_n$-minimizing solution of \eqref{dis_G_K_mod_eq_exact_da}. 
\end{proposition}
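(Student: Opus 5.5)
We follow the classical argument of Jin and Wang for Landweber-type methods in Banach spaces. The plan has three parts: (a) show that $\{u_{n,k}\}$ is Cauchy; (b) identify the limit as an element of $\mathcal S_{n,l}$ at which the Bregman distance vanishes; (c) identify the limit as the minimizing solution under the range condition.

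\textbf{Step 1: a Cauchy subsequence.} Fix $\hat u_n\in\mathcal S_{n,l}$ and use the three-point identity
\[
D_{\mathfrak f_n}^{\zeta_{n,k_z}}(u_{n,k_w},u_{n,k_z})
= D_{\mathfrak f_n}^{\zeta_{n,k_z}}(\hat u_n,u_{n,k_z})-D_{\mathfrak f_n}^{\zeta_{n,k_w}}(\hat u_n,u_{n,k_w})
+\langle \zeta_{n,k_w}-\zeta_{n,k_z},u_{n,k_w}-\hat u_n\rangle ,
\]
which follows directly from \eqref{bregman_formula}. By (ii), the difference of the first two terms tends to $0$ as $z,w\to\infty$. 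By (iv), the inner product term, taken in the $\sup_{w\ge z}$ sense, tends to $0$. The coercivity \eqref{bregman_norm_relation} then shows that $\{u_{n,k_w}\}$ is Cauchy in the finite-dimensional space $\mathcal U_n$. Hence $u_{n,k_w}\to u_n^*$ for some $u_n^*$.

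\textbf{Step 2: the limit is a solution and the Bregman distance vanishes.} By (iii) and continuity of the matrices, $\mathcal T_{i,n}^lu_n^*=v_{i,n}$ for every $i$. Lower semicontinuity gives $\mathfrak f_n(u_n^*)\le\liminf\mathfrak f_n(u_{n,k_w})$. To bound $\mathfrak f_n(u_{n,k_w})$ from above, write
\[
\mathfrak f_n(u_{n,k_w})=\mathfrak f_n(\hat u_n)-D_{\mathfrak f_n}^{\zeta_{n,k_w}}(\hat u_n,u_{n,k_w})-\langle\zeta_{n,k_w},\hat u_n-u_{n,k_w}\rangle .
\]
The first two terms are bounded by (ii). For the inner product, split $\zeta_{n,k_w}=\zeta_{n,k_z}+(\zeta_{n,k_w}-\zeta_{n,k_z})$. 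The second piece is controlled by (iv), and the first piece converges as $w\to\infty$ for fixed $z$. It follows that $u_n^*\in\operatorname{dom}(\mathfrak f_n)$, so $u_n^*\in\mathcal S_{n,l}$.

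We now take $\hat u_n=u_n^*$ and compute
\[
D^{\zeta_{n,k_w}}_{\mathfrak f_n}(u_n^*,u_{n,k_w})=\mathfrak f_n(u_n^*)-\mathfrak f_n(u_{n,k_w})-\langle\zeta_{n,k_w},u_n^*-u_{n,k_w}\rangle .
\]
Splitting the inner product again via $\zeta_{n,k_z}$, letting $w\to\infty$, and using lower semicontinuity together with (iv), we get $\limsup_w D\le \varepsilon_z$ with $\varepsilon_z\to0$. So the Bregman distance tends to $0$ along the subsequence. Since the full sequence $D^{\zeta_{n,k}}_{\mathfrak f_n}(u_n^*,u_{n,k})$ converges by (ii), its limit must be $0$.

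\textbf{Step 3: identification under the range condition.} If $\zeta_{n,k+1}-\zeta_{n,k}$ lies in $\mathcal R:=\operatorname{Ran}(\mathcal T_{1,n}^{l*})\oplus\cdots\oplus\operatorname{Ran}(\mathcal T_{q,n}^{l*})$, then every $\zeta_{n,k}-\zeta_{n,0}\in\mathcal R$. We assume $\zeta_{n,0}=0$, or more generally $\zeta_{n,0}\in\mathcal R$; in finite dimensions $\mathcal R$ is closed. For any $\hat u_n\in\mathcal S_{n,l}$ we have $\hat u_n-u_n^*\in\bigcap_i\ker\mathcal T_{i,n}^l=\mathcal R^\perp$, hence $\langle\zeta_{n,k},\hat u_n-u_n^*\rangle=0$. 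Using this in the Bregman expansion gives
\[
D^{\zeta_{n,k}}(\hat u_n,u_{n,k})-D^{\zeta_{n,k}}(u_n^*,u_{n,k})=\mathfrak f_n(\hat u_n)-\mathfrak f_n(u_n^*).
\]
The left side is $\ge -D^{\zeta_{n,k}}(u_n^*,u_{n,k})\to0$, so $\mathfrak f_n(u_n^*)\le\mathfrak f_n(\hat u_n)$ for every $\hat u_n\in\mathcal S_{n,l}$. Strong convexity gives uniqueness of the minimizer, and therefore $u_n^*=u_{n,l}^\dagger$.

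The main obstacle is Step 2. There we must show that $u_n^*\in\operatorname{dom}(\mathfrak f_n)$ and control $\mathfrak f_n(u_{n,k_w})$ from above, because only lower semicontinuity of $\mathfrak f_n$ is available. Both the bound on $\mathfrak f_n(u_{n,k_w})$ and the vanishing of the Bregman distance depend on using the uniform-in-$w$ statement of (iv) carefully.
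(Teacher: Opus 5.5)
Your proof is correct and is essentially the standard argument of the works the paper cites for this proposition (the paper gives no proof of its own): the three-point identity with (ii) and (iv) yields a Cauchy subsequence, lower semicontinuity places the limit in $\mathcal S_{n,l}$ with vanishing Bregman distance along that subsequence, and (ii) upgrades this to the full sequence. Your Step 2 could be shortened, since both facts follow at once from $D_{\mathfrak f_n}^{\zeta_{n,k_z}}(u_n^*,u_{n,k_z})\le\liminf_{w\to\infty}D_{\mathfrak f_n}^{\zeta_{n,k_z}}(u_{n,k_w},u_{n,k_z})$.

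You were also right to add the hypothesis $\zeta_{n,0}\in\operatorname{Ran}(\mathcal T_{1,n}^{l*})\oplus\cdots\oplus\operatorname{Ran}(\mathcal T_{q,n}^{l*})$ (e.g.\ $\zeta_{n,0}=0$, the convention of the introduction), because the final claim is false without it. Take $\mathfrak f_n=\tfrac12\|\cdot\|^2$ and the constant sequence $u_{n,k}=\zeta_{n,k}=\zeta_{n,0}$, with $\zeta_{n,0}\in\mathcal S_{n,l}$ having a nonzero component in $\bigcap_i\ker\mathcal T_{i,n}^l$: all hypotheses hold, but the limit is not the $\mathfrak f_n$-minimizing solution. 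Without the hypothesis, your Step 3 only shows that $u_n^*$ minimizes $D_{\mathfrak f_n}^{\zeta_{n,0}}(\cdot,u_{n,0})$ over $\mathcal S_{n,l}$.
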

\begin{theorem}\label{Theorem_G_K_exact}
Consider Algorithm~\ref{algo:G_K_exact_data_exact} and assume the parameter $\mu_0>0$ is chosen so that \eqref{parameter_condition_2} holds. Let there exists a fixed positive integer $M$ with $m_k\leq M$ for all k. Then, the iterates generated by Algorithm~\ref{algo:G_K_exact_data_exact} converges almost surely to the $\mathfrak f_n$-minimizing solution $u_{n,l}^\dagger$ of \eqref{dis_G_K_mod_eq_exact_da}, i.e.,  
\begin{equation}\label{theorem_G_K_statement}
\lim_{k\to\infty}D_{\mathfrak{f}_n}^{\zeta_{n,k}}(u_{n,l}^\dagger ,u_{n,k}) = 0\;, \qquad \lim_{k\to\infty}\|u_{n,l}^\dagger -u_{n,k}\|=0 \quad \text{almost surely.} 
\end{equation} 
\end{theorem}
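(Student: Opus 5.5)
We will verify the four hypotheses of Proposition~\ref{proposition_G_K} along every sample path $\omega\in\Omega_0$, where $\Omega_0$ is the probability-one event of Lemma~\ref{lemma_prob1_G_K}. The sequence to which the proposition is applied is the merged sequence of all iterates $(u_{n,k,m},\zeta_{n,k,m})$ and $(u_{n,k},\zeta_{n,k})$, ordered chronologically. Condition (i) is immediate from the first-order optimality of the $\arg\min$ step: $\zeta\in\partial\mathfrak f_n(u)$ for every iterate. Condition (ii) follows from the monotonicity \eqref{mono_G_K_algo_2}, which holds for every $\hat u_n\in\mathcal S_{n,l}$ (the lemma is stated for any element of $\mathcal S_{n,l}$). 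Hence the Bregman distances are nonincreasing along the merged sequence, bounded below by $0$, and therefore convergent.

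For condition (iii), on $\Omega_0$ the summability gives $\|h_{k,m,i}\|\to0$ and $\|h_{k+1,i}\|\to0$ for every $i$. To pass from $h$ to the residual, note that $\gamma_j$ is decreasing, so $A_{k,i}\le\gamma_0 I+B_{l+1,l,i}B_{l+1,l,i}^*$. Then
\[
\|U_{l+1,i}^*(\mathcal T_{i,n}^lu-v_{i,n})\|\le \|(\gamma_0I+B_{l+1,l,i}B_{l+1,l,i}^*)^{1/2}\|\,\|h\|.
\]
Moreover, $\mathcal T_{i,n}^lu-v_{i,n}=U_{l+1,i}B_{l+1,l,i}V_{l,i}^*(u-\hat u_n)$ lies in $\operatorname{Ran}U_{l+1,i}$, so the factor $U_{l+1,i}^*$ loses no norm. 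This yields $\|\mathcal T_{i,n}^lu_{n,k}-v_{i,n}\|\to0$ for the merged sequence.

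Condition (iv) is the main obstacle. We take the full merged index sequence as $\{k_w\}$. For $w>z$, write $\zeta_{k_w}-\zeta_{k_z}$ as a telescoping sum of increments. Each increment has the form $-t\,V_{l,i}B^*_{l+1,l,i}A^{-1}U^*_{l+1,i}(\mathcal T^l_{i,n}u_j-v_{i,n})$, summed over $i\in\Delta$ for the outer steps. Pairing with $u_{k_w}-\hat u_n$ and using adjointness gives terms $t\langle h_{j,i},A^{-1/2}U^*_{l+1,i}(\mathcal T^l_{i,n}u_{k_w}-v_{i,n})\rangle$, which are bounded by $\mu_1\|h_{j,i}\|\,\|\tilde h_{j,i}(w)\|$. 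Here $\tilde h_{j,i}(w)$ is the residual at time $k_w$, weighted with $\gamma_j$. The difficulty is that $\gamma_j$ decreases, so $\|A_{j,i}^{-1/2}\|\le\gamma_j^{-1/2}$ may blow up, and the weight at time $j$ differs from that at $k_w$.

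We plan the standard trick of \cite{jin2013landweber}: for each $z$ choose, within the window, the index $k_z^*$ minimizing the residual size $\sum_i\|\mathcal T^l_{i,n}u_j-v_{i,n}\|$. We then compare both $k_z$ and $k_w$ to it, splitting $\langle\zeta_{k_w}-\zeta_{k_z},\cdot\rangle$ accordingly. Because $\|\mathcal T^l u_{k_w}-v\|\le\|\mathcal T^l u_{j}-v\|+\|\mathcal T^l u_{k_w}-\mathcal T^l u_{j}\|$ and the minimizing choice controls the cross terms, the sum is bounded by a constant times a tail $\sum_{j\ge z}\sum_i\|h_{j,i}\|^2$. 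The bound $m_k\le M$ is used here to relate inner steps to outer full-residual quantities: the inner term $\|h_{k,m}\|^2\le\sum_i\|h_{k,m,i}\|^2$, and at most $M$ inner steps lie between consecutive outer checks. This tail tends to $0$ on $\Omega_0$. Because each $\tilde h$ is computed with the same $A_{j,i}$ as $h_{j,i}$, the $\gamma$-weights cancel in the Cauchy–Schwarz estimate and no blow-up occurs; this cancellation is the point to check carefully.

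Finally, all increments lie in $\operatorname{Ran}(\mathcal T^{l*}_{1,n})\oplus\cdots\oplus\operatorname{Ran}(\mathcal T^{l*}_{q,n})$, since $V_{l,i}B^*_{l+1,l,i}w=\mathcal T^{l*}_{i,n}U_{l+1,i}w$ for every $w$. Proposition~\ref{proposition_G_K} therefore gives $D^{\zeta_{n,k}}_{\mathfrak f_n}(u^\dagger_{n,l},u_{n,k})\to0$ on $\Omega_0$, and \eqref{bregman_norm_relation} yields the norm convergence.
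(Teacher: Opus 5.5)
Your treatment of conditions (i)--(iii) and of the range condition is sound, and applying Proposition~\ref{proposition_G_K} to the merged chronological sequence is legitimate. The gap is in condition (iv).

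In (iv), \emph{every} increment in the window is paired with $u_{n,k_w}-\hat u_n$. Each term is therefore bounded by $\mu_1\|h_{j,i}\|\,\|A_{j,i}^{-1/2}Z_{w,i}\|$ with $Z_{w,i}:=U_{l+1,i}^*(\mathcal T^l_{i,n}u_{n,k_w}-v_{i,n})$, so the second factor always carries the residual at the \emph{last} index. Your device fails at several points:
\begin{enumerate}[(a)]
\item An index $k_z^*$ minimizing the residual inside the window says nothing about how the residual at $k_w$ compares with the earlier ones.
\item Splitting $\zeta_{n,k_w}-\zeta_{n,k_z}$ at $k_z^*$ changes nothing, since both pieces are still paired with $u_{n,k_w}-\hat u_n$.
\item Your triangle inequality only trades $\|\mathcal T^l_{i,n}u_{n,k_w}-v_{i,n}\|$ for $\|\mathcal T^l_{i,n}(u_{n,k_w}-u_{n,j})\|$. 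Nothing controls that term, and bounding it again by the two residuals is circular.
\item With the full sequence as $\{k_w\}$ and no ordering, the estimate is schematically $\mu_1\|h_{k_w}\|\sum_{j=k_z}^{k_w-1}\|h_j\|$. Square-summability of the $h$'s does not make this small uniformly in $w\ge z$: a late isolated bump in the residual after a long run of small residuals defeats it.
\end{enumerate}
The interior-minimizer device is the Hanke--Neubauer--Scherzer Cauchy argument, which pairs with $u_{n,k_z^*}-\hat u_n$. That proves a different statement from (iv), and you would then have to redo the conclusion of the proposition yourself.

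Also, the weights do not ``cancel''. What controls $\|A_{j,i}^{-1/2}Z_{w,i}\|$ is the operator inequality $A_{j,i}\ge A_{k_w,i}$. It holds because $\gamma_j\ge\gamma_{k_w}$ for every step $j$ preceding $k_w$, and it yields $\|A_{j,i}^{-1/2}Z_{w,i}\|\le\|h_{k_w,i}\|$.

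The missing idea is the choice of subsequence. Take $\{k_z\}$ to be the record minima of a summable weighted quantity, such as
$\Psi_k:=\sum_{m=0}^{m_k-1}\sum_{i=1}^q\|h_{k,m,i}\|^2+\sum_{i=1}^q\|h_{k,i}\|^2$,
so that $\Psi_{k_w}\le\Psi_k$ for all $k_z\le k<k_w$. Combine this with the operator inequality and with Cauchy--Schwarz over $i$ and over the at most $M+1$ steps of a cycle. Each cycle then contributes at most $\mu_1\sqrt{M+1}\,\Psi_k^{1/2}\Psi_{k_w}^{1/2}\le\mu_1\sqrt{M+1}\,\Psi_k$, and the tail $\sum_{k\ge k_z}\Psi_k$ vanishes on $\Omega_0$. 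This is how the paper verifies (iv).

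If you instead do the record-minimum selection on your merged sequence, with $R_j:=\sum_i\|h_{j,i}\|^2$ at each merged index, every single increment is bounded by $\mu_1R_j^{1/2}R_{k_w}^{1/2}\le\mu_1R_j$. In that version the hypothesis $m_k\le M$ is not even needed.
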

\begin{proof}
Consider the event $\Omega_{0}$ defined in \eqref{Prob_one_event}. Then, to establish \ref{theorem_G_K_statement} it is sufficient to show that $D_{\mathfrak{f}_{n}}^{\zeta_{n,k}}(u_{n,l}^{\dagger}, u_{n,k}) \to 0$  along every sample path in $\Omega_{0}$. For this, let we fix an arbitrary sample path $\{i_{k,m} : k \ge 0 \text{ and } 0\leq m < m_{k}\}$ in $\Omega_0$. We examine the iterates along this path and show the convergence using Proposition~\ref{proposition_G_K}. Note that condition (i) of Proposition~\ref{proposition_G_K} holds trivially from definition of $u_{n,k}$, while condition (ii) is a direct consequence of \eqref{mono_G_K_algo_2}. To verify condition (iii), consider the quantity 
\begin{equation}\label{Psi_definition}
\begin{aligned}\Psi_k := &\sum_{m=0}^{m_k-1} S_{k,m} + \sum_{i=1}^{q} \|h_{k,i}\|^2. \\
\end{aligned}
\end{equation}
Applying Lemma~\ref{lemma_prob1_G_K} to this, gives $\sum_{k=0}^{\infty}\Psi_k < \infty$.
This implies that $\Psi_k \to 0$ as $k \to \infty$. Consequently, for each $i\in \{1,\ldots,q\}$, $\|h_{k,i}\| \to 0  \text{ as } k \to \infty$. Now observe that 
\((\mathcal{T}_{i,n}^lu_{n,k}-v_{i,n})\in \operatorname{Ran}(U_{l+1,i})  \text{ and } U_{l+1,i}U_{l+1,i}^*\)
is an orthogonal projection onto $\operatorname{Ran}(U_{l+1,i})$. Therefore, using this with the fact $U_{l+1,i}$ has orthonormal columns, we get
\begin{align*}
\|\mathcal{T}_{i,n}^lu_{n,k}-v_{i,n}\|=\|U_{l+1,i}U_{l+1,i}^*(\mathcal{T}_{i,n}^lu_{n,k}-v_{i,n})\|
&=\|U_{l+1,i}^*(\mathcal{T}_{i,n}^lu_{n,k}-v_{i,n})\|\\
&=\|(A_{k,i})^{1/2}(A_{k,i})^{-1/2}U_{l+1,i}^*(\mathcal{T}_{i,n}^lu_{n,k}-v_{i,n})\|  \\
&=\|(A_{k,i})^{1/2}h_{k,i}\|\\
&\leq\|(A_{k,i})^{1/2}\| \|h_{k,i}\|.
\end{align*}
Since $\gamma_k\leq\gamma_0$ $\forall$ $k\geq0$, we have $(\gamma_kI+B_{l+1,l,i}B_{l+1,l,i}^*)\leq(\gamma_0I+B_{l+1,l,i}B_{l+1,l,i}^*)$, which gives the bound 
\[\|(A_{k,i})^{1/2}\|\leq \|(\gamma_kI+B_{l+1,l,i}B_{l+1,l,i}^*)\|^{1/2}\leq(\gamma_0+\|B_{l+1,l,i}B_{l+1,l,i}^*\|)^{1/2}.\]
Thus, we have 
\[\|\mathcal{T}_{i,n}^lu_{n,k}-v_{i,n}\|\leq (\gamma_0+\|B_{l+1,l,i}B_{l+1,l,i}^*\|)^{1/2}\|h_{k,i}\|.\]
Finally, letting $k\to\infty$ in the above inequality yields \[\| \mathcal{T}_{i,n}^l u_{n,k} - v_{i,n} \| \to 0 \quad \text{as } k \to \infty \] 
for each $i\in \{1,\dots,q\}$.
We now proceed to verify condition (iv) of Proposition~\ref{proposition_G_K}. Since $\Psi_k\to0$ as $k\to\infty$, we can construct a strictly increasing subsequence of non-negative integers $\{k_z\}_{z\geq0}$, such that for every $w\geq z$, 
 \[\Psi_{k_w}\leq\Psi_k \;\text{ for all }\; k_z\leq k\leq k_w-1.\]
Let $\hat{u}_n$ be an arbitrary  solution of \eqref{dis_G_K_mod_eq_exact_da} and $w\geq z$. Note that 
\begin{align*}
\zeta_{n,k_w} - \zeta_{n,k_z} & = \sum_{k=k_z}^{k_w-1} \bigg(\zeta_{n,k,0}-\zeta_{n,k} + \sum_{m=0}^{m_k-1}(\zeta_{n,k,m+1}-\zeta_{n,k,m}) \bigg)\\
&=-\sum_{k=k_z}^{k_w-1}t_k g_k - \sum_{k=k_z}^{k_w-1}\sum_{m=0}^{m_k-1}t_{k,m}g_{k,m}.
\end{align*}
Therefore, 
\begin{equation}\label{inner_product_bound}
\vert\langle \zeta_{n,k_w}-\zeta_{n,k_z}, u_{n,k_w}-\hat{u}_n\rangle\vert\leq \sum_{k=k_z}^{k_w-1}t_k|\langle g_k,u_{n,k_w}-\hat{u}_n \rangle| + \sum_{k=k_z}^{k_w-1}\sum_{m=0}^{m_k-1}t_{k,m}\vert\langle g_{k,m}, u_{n,k_w}-\hat{u}_n\rangle\vert.
\end{equation}
Let $Z:=U_{l+1,i_{k,m}}^*(T_{i_{k,m},n}^lu_{k_w}-v_{i_{k,m},n})$. Then, using the fact that $U_{l+1,i}$ has orthonormal columns with the definitions of $g_{k,m}$ as in Algorithm~\ref{algo:G_K_exact_data_exact} and $T_{i,n}^l$ as in \eqref{Golub_Kahan_approx}, it follows that
\begin{align*}
\vert\langle g_{k,m}, u_{n,k_w}-\hat{u}_n\rangle\vert
&=  \vert\langle V_{l,i_{k,m}}B_{l+1,l,i_{k,m}}^*(A_{i_k,m})^{-1/2}h_{k,m}, u_{n,k_w}-\hat{u}_n\rangle\vert\\
&=\vert\langle h_{k,m}, (A_{i_k,m})^{-1/2}U_{l+1,i_{k,m}}^*U_{l+1,i_{k,m}}B_{l+1,l,i_{k,m}}V_{l,i_{k,m}}^*(u_{n,k_w}-\hat{u}_{n})\rangle\vert \\
&=\vert\langle h_{k,m}, (A_{i_k,m})^{-1/2}U_{l+1,i_{k,m}}^*(T_{i_{k,m},n}^lu_{n,k_w}-v_{i_{k,m},n})\rangle\vert\\
&\leq \|h_{k,m}\|\;\|(A_{i_k,m})^{-1/2}Z\|.
\end{align*}Note that, when $k_z\leq k\leq k_w-1$, the inner step $(k,m)$ occurs before index $k_w$, which implies $\gamma_{k,m}\geq \gamma_{k_w}$. Consequently,
\[ A_{i_{k,m}}=
(\gamma_{k,m}I+B_{l+1,l,i_{k,m}}B_{l+1,l,i_{k,m}}^*)\geq(\gamma_{k_w}I+B_{l+1,l,i_{k,m}}B_{l+1,l,i_{k,m}}^*)=
A_{k_w,i_{k,m}}.\]
Since $A_{i_{k,m}}$ and $A_{k_w,i_{k,m}}$ are strictly positive operators, the above  inequality implies that $\|A_{i_{k,m}}^{-1/2}Z\| \leq\|A_{k_w,i}^{-1/2}Z\|$. Using this with the fact that $i_{k,m}\in \{1,\ldots,q\}$, we obtain
\begin{align*}
\vert\langle g_{k,m}, u_{n,k_w}-\hat{u}_n\rangle\vert
&\leq \|h_{k,m}\|\;\|(A_{k_w,i_{k,m}})^{-1/2}Z\|\\
&\leq\bigg(\sum_{i=1}^{q}\|h_{k,m,i}\|^2\bigg)^{1/2} \bigg(\sum_{i=1}^{q}\|h_{k_w,i}\|^2\bigg)^{1/2}.
\end{align*}
Following similar arguments for $\vert\langle g_k,u_{n,k_w}-\hat{u}_n\rangle\vert$, we also have
\[\vert\langle g_k,u_{n,k_w}-\hat{u}_n\rangle\vert \leq \bigg(\sum_{i=1}^{q}\|h_{k,i}\|^2\bigg)^{1/2} \bigg(\sum_{i=1}^{q}\|h_{k_w,i}\|^2\bigg)^{1/2}.\]
Substituting these estimates in \eqref{inner_product_bound} and using the definitions of $t_k$, $t_{k,m}$, and $\Psi_k$, we obtain
\begin{align*}
\vert\langle \zeta_{n,k_w}-\zeta_{n,k_z}, u_{n,k_w}-\hat{u}_n\rangle\vert
&\leq \mu_1\sum_{k=k_z}^{k_w-1}\bigg(\sum_{i=1}^{q}\|h_{k,i}\|^2\bigg)^{1/2} \bigg(\sum_{i=1}^{q}\|h_{k_w,i}\|^2\bigg)^{1/2}\\
&\qquad\qquad +\mu_1\sum_{k=k_z}^{k_w-1} \sum_{m=0}^{m_k-1}\bigg(\sum_{i=1}^{q}\|h_{k,m,i}\|^2\bigg)^{1/2} \bigg(\sum_{i=1}^{q}\|h_{k_w,i}\|^2\bigg)^{1/2}\\
&\leq\mu_1(\Psi_{k_w})^{1/2}\sum_{k=k_z}^{k_w-1}\bigg[\bigg(\sum_{i=1}^{q}\|h_{k,i}\|^2\bigg)^{1/2} + \sum_{m=0}^{m_k-1}\bigg(\sum_{i=1}^{q}\|h_{k,m,i}\|^2\bigg)^{1/2}\bigg].
\end{align*}
By boundedness of $m_k$ and Cauchy-Schwarz inequality, we have 
\begin{align*}
\bigg(\sum_{i=1}^{q}\|h_{k,i}\|^2\bigg)^{1/2} + \sum_{m=0}^{m_k-1}\bigg(\sum_{i=1}^{q}\|h_{k,m,i}\|^2\bigg)^{1/2}
& \leq \sqrt{m_k+1}\bigg[\sum_{i=1}^{q}\|h_{k,i}\|^2 + \sum_{m=0}^{m_k-1}\sum_{i=1}^{q}\|h_{k,m,i}\|^2\bigg]^{1/2}\\
&\leq \sqrt{M+1}(\Psi_k)^{1/2}.
\end{align*}
Therefore,
\[ \vert\langle \zeta_{n,k_w}-\zeta_{n,k_z}, u_{n,k_w}-\hat{u}_n\rangle\vert\le\mu_1\sqrt{M+1}\sum_{k=k_z}^{k_w-1}\Psi_k^{1/2}\Psi_{k_w}^{1/2}.\]
By construction of the sequence, for every $ k_z\le k\le k_w-1$, we have  $\Psi_{k_w}\le \Psi_k$. Hence $\Psi_k^{1/2}\Psi_{k_w}^{1/2}\le\Psi_k.$
Consequently,
\[ \vert\langle \zeta_{n,k_w}-\zeta_{n,k_z}, u_{n,k_w}-\hat{u}_n\rangle\vert
\le\mu_1\sqrt{M+1}\sum_{k=k_z}^{k_w-1}\Psi_k.\]
Since the series $\sum_{k=0}^{\infty} \Psi_k$ converges, it follows that \[\underset{z\to\infty}{\lim}\underset{w\geq z}\sup{}\left| \langle \zeta_{n,k_w} - \zeta_{n,k_z}, u_{n,k_w} - \hat{u}_n \rangle \right| = 0.\]
Thus, condition (iv) of Proposition~\ref{proposition_G_K} is also satisfied. Hence, by Proposition~\ref{proposition_G_K}, there exists a  solution $u_n^*$ of \eqref{dis_G_K_mod_eq_exact_da} in $\operatorname{dom}(\mathfrak{f}_n)$ such  that $D_{\mathfrak{f}_n}^{\zeta_{n,k}}(u_n^*, u_{n,k} ) \to 0$ as $k\to\infty$.  Consequently, using $\nu$-strong convexity \eqref{bregman_norm_relation} of $\mathfrak{f}_n$, we get $\|u_n^* -u_{n,k}\| \to0$ as $k\to\infty$.
Finally, by the definitions of  $\mathcal{T}_{i_{k,m},n}^{l*}$, $\mathcal{T}_{i,n}^{l*}$ and update rules in Algorithm~\ref{algo:G_K_exact_data_exact} we observe that
\begin{align*}
\zeta_{n,k,m+1} - \zeta_{n,k,m} &= -t_{k,m}\mathcal{T}_{i_{k,m},n}^{l*}U_{l+1,i_{k,m}}(A_{i_k,m})^{-1/2}h_{k,m}, \quad \text{and}\\
 \quad \zeta_{n,k,0} - \zeta_{n,k} &= -t_k\sum_{i \in \Delta_{k}} \mathcal{T}_{i,n}^{l*}U_{l+1,i}(A_{k,i})^{-1/2} h_{k,i}.
\end{align*}
Therefore, we have 
$$\zeta_{n,k,m+1} - \zeta_{n,k,m} \in \operatorname{Ran}(\mathcal{T}_{i_{k,m},n}^{l*}) \subseteq \operatorname{Ran}(\mathcal{T}_{1,n}^{l*})\oplus\ldots\oplus\operatorname{Ran}(\mathcal{T}_{q,n}^{l*})$$ and 
$$\zeta_{n,k,0} - \zeta_{n,k} \in \underset{i \in \Delta_k}{\oplus}\operatorname{Ran}(\mathcal{T}_{i,n}^{l*})\subseteq \operatorname{Ran}(\mathcal{T}_{1,n}^{l*})\oplus\ldots\oplus\operatorname{Ran}(\mathcal{T}_{q,n}^{l*}).$$ 
Consequently, for every $k\geq0,$
\[
\zeta_{n,k+1} - \zeta_{n,k} 
= \left(\zeta_{n,k,0} - \zeta_{n,k}\right) + \sum_{m=0}^{m_k-1}\left(\zeta_{n,k,m+1} - \zeta_{n,k,m}\right) 
\in \operatorname{Ran}(\mathcal{T}_{1,n}^{l*})\oplus\ldots\oplus\operatorname{Ran}(\mathcal{T}_{q,n}^{l*})
.\]
Thus, using the last part of Proposition~\ref{proposition_G_K}, we conclude that $u_n^* = u_{n,l}^\dagger$ along every sample path in $\Omega_0$. Since $\mathbb{P}(\Omega_0) = 1$, the equality holds almost surely. This completes the proof. 
\end{proof}

Our next aim is to establish the following stability result that connects the Algorithms~\ref{algo:G_K_noisy_data} and~\ref{algo:G_K_exact_data_exact}.

\begin{lemma}\label{lemma_stability_G_K}
Consider Algorithms~\ref{algo:G_K_noisy_data} and~\ref{algo:G_K_exact_data_exact} with the same parameters
$\tau>1,\mu_0>0, \mu_1>0 $, the same sequence $\{m_k\}$ and the same
realization of the random indices. Let $\hat{k}:=\lim\inf_{\delta\to0}k_\delta$ along any fixed sample path. Then, for every fixed integer \(k\) with \(0\le k\le \hat k\) and every
\(m=0,\ldots,m_k\), we have
\begin{equation}\label{stability_G_K}
\|u^\delta_{n,k,m}-u_{n,k,m}\|\to0 \;\text{ and } \; \|\zeta^\delta_{n,k,m}-\zeta_{n,k,m}\|\to0\quad\text{as }\delta\to0 .
\end{equation}
When $\hat k=\infty$, we take $0\leq k< \hat{k}$.
\end{lemma}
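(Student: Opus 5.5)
We argue by induction along the fixed sample path, following the order in which Algorithms~\ref{algo:G_K_noisy_data} and~\ref{algo:G_K_exact_data_exact} generate their iterates: $(0,0),(0,1),\ldots,(0,m_0)=(1)$, then the outer update to $(1,0)$, and so on. The base case is trivial because both algorithms start from the same $\zeta_{n,0}$ and $u_{n,0}$. Continuity of $\zeta\mapsto u$ follows from $u=\nabla\mathfrak f_n^*(\zeta)$ and the Lipschitz bound \eqref{taylor_first_order_approximation_fenchel_dual}, which give $\|u^\delta-u\|\le\|\zeta^\delta-\zeta\|/(2\nu)$. It therefore suffices to propagate convergence of the $\zeta$-iterates. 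Throughout, we work with $k<\hat k$ (or $k\le\hat k$ if $\hat k$ is finite). Since $k<\hat k=\liminf_{\delta\to0}k_\delta$, we have $k_\delta>k$ for all sufficiently small $\delta$, so the noisy algorithm has not stopped before step $k$ and all noisy iterates up to $(k,m_k)$ are well defined. At $k=\hat k$ we only need iterates produced before the stopping check, so this remains valid.

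\emph{Inner step.} Assume $\zeta^\delta_{n,k,m}\to\zeta_{n,k,m}$, and hence $u^\delta_{n,k,m}\to u_{n,k,m}$. The Golub--Kahan matrices and $\mathcal T^l_{i,n}$ are built from $v^\delta_{i,n}$. We take them as fixed (as the statement implicitly does with a single $\mathcal S_{n,l}$), or, if needed, we note they converge as $\delta\to0$ under the no-breakdown assumption. With this, $h^\delta_{k,m}\to h_{k,m}$ and $g^\delta_{k,m}\to g_{k,m}$, using $v^\delta_{i,n}\to v_{i,n}$ and $\gamma_{k,m}$ being the same in both algorithms. We then split into two cases.

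\emph{Case $h_{k,m}\neq0$.} For small $\delta$ we have $\gamma_{k,m}\|h^\delta_{k,m}\|^2>\tau^2\delta_{i_{k,m}}^2$, since the right-hand side tends to $0$. Moreover $g_{k,m}\neq0$: indeed, $h_{k,m}\in\operatorname{Ran}$ of an invertible map applied to $\operatorname{Ran}(B_{l+1,l})$-type vectors, and $V_{l}$ together with the full column rank of $B^*$ restricted there makes $g\ne0$. If this cannot be verified directly, we fall back on the continuity of $\min\{\mu_0\|h\|^2/\|g\|^2,\mu_1\}$ wherever $g\neq0$. Hence $t^\delta_{k,m}\to t_{k,m}$ and $\zeta^\delta_{n,k,m+1}\to\zeta_{n,k,m+1}$.

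\emph{Case $h_{k,m}=0$.} Here $t_{k,m}=0$, so $\zeta_{n,k,m+1}=\zeta_{n,k,m}$. In the noisy algorithm, either $t^\delta_{k,m}=0$, or $t^\delta_{k,m}\le\mu_1$ with $\|g^\delta_{k,m}\|\le\|h^\delta_{k,m}\|\to0$. In both cases $\|\zeta^\delta_{n,k,m+1}-\zeta_{n,k,m+1}\|\le\|\zeta^\delta_{n,k,m}-\zeta_{n,k,m}\|+\mu_1\|h^\delta_{k,m}\|\to0$. The bound on $\|g\|$ comes from $\|V_l\|\le1$ and \eqref{operator_bounds}. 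This case analysis is the key device: it avoids needing continuity of the discontinuous step-size rule at the threshold.

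\emph{Outer step.} This is the main obstacle, because the active sets $\Delta^\delta_{k+1}$ and $\Delta_{k+1}$ may differ. We handle it componentwise. If $h_{k+1,i}\neq0$, then $i\in\Delta^\delta_{k+1}$ for small $\delta$, so these indices eventually belong to both sets. The remaining indices $i\in\Delta^\delta_{k+1}\setminus\Delta_{k+1}$ have $h^\delta_{k+1,i}\to h_{k+1,i}=0$, so they contribute $o(1)$ to both $g^\delta_{k+1}$ and $\Phi^\delta_{k+1}$. Thus $g^\delta_{k+1}\to g_{k+1}$ and $\Phi^\delta_{k+1}\to\Phi_{k+1}$. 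If $\Delta_{k+1}\neq\emptyset$, then $\Phi_{k+1}>0$; assuming also $g_{k+1}\neq0$, continuity gives $t^\delta_{k+1}\to t_{k+1}$. If $g_{k+1}=0$, we use the bound $t\le\mu_1$ as in the inner case. If $\Delta_{k+1}=\emptyset$, then $t^\delta_{k+1}\le\mu_1$ and $\|g^\delta_{k+1}\|\le\sum_i\|h^\delta_{k+1,i}\|\to0$. In all cases $\zeta^\delta_{n,k+1,0}\to\zeta_{n,k+1,0}$. Combining the inner and outer steps completes the induction up to the stated range of $k$.
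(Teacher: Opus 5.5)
Your proposal follows the paper's proof essentially step for step: induction along the fixed sample path, Lipschitz continuity of $\nabla\mathfrak f_n^*$ to pass from $\zeta$ to $u$, the split $h_{k,m}=0$ versus $h_{k,m}\neq0$ in the inner step, and the inclusion $\Delta_{k}\subseteq\Delta_k^\delta$ for small $\delta$ in the outer step, with the remaining indices contributing $o(1)$ to $g_k^\delta$ and $\Phi_k^\delta$. The one loose spot is your hedged claim that $h_{k,m}\neq0$ forces $g_{k,m}\neq0$: it is true, most cleanly via the identity $\langle g_{k,m},u_{n,k,m}-\hat u_n\rangle=\|h_{k,m}\|^2$ for $\hat u_n\in\mathcal S_{n,l}$ (the computation in Lemma~\ref{lemma_mon_G_k}), and in any case a subcase $g_{k,m}=0$ would be closed by the same bound $\|t_{k,m}^\delta g_{k,m}^\delta\|\le\mu_1\|g_{k,m}^\delta\|\to0$ that you use in the outer step, which is slightly simpler than the paper's argument that $t_{k,m}^\delta=\mu_1$ eventually.
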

\begin{proof}
Let us fix an arbitrary sample path $\{i_{k,m}:k\geq0 \text{ and } 0\leq m<m_k\}$. Along this sample path we will use induction on $(k,m)$ to show \eqref{stability_G_K}. We note that $\zeta_{n,0,0}^\delta=\zeta_{n,0,0}$ and $ u_{n,0,0}^\delta = u_{n,0,0}$, so the assertion holds for $(k,m)=(0,0)$. Now to prove the induction step, it is sufficient to verify following two implications:
\begin{enumerate}[(i)]
\item If $u_{n,k,m}^\delta\to u_{n,k,m}$ and $\zeta_{n,k,m}^\delta \to \zeta_{n,k,m}$ as $ \delta\to0 $ for some $0\leq k<\hat{k}$ and $ 0\leq m< m_k$, then $u_{n,k,m+1}^\delta\to u_{n,k,m+1}$ and $\zeta_{n,k,m+1}^\delta\to\zeta_{n,k,m+1}$ as $\delta\to0$.
\item If $u_{n,k}^\delta\to u_{n,k}$ and $\zeta_{n,k}^\delta \to \zeta_{n,k} $ as $ \delta \to0 $ for some $ 1 \leq k < \hat{k} $, then $ u_{n,k,0}^\delta\to u_{n,k,0}$ and $ \zeta_{n,k,0}^\delta \to \zeta_{n,k,0} $ as $ \delta \to0 $.
\end{enumerate}
We start with the proof of~(i). Note that, by induction argument in~(i) and continuity of $\mathcal{T}_{i,n}^l$ for each $i$, we have $h_{k,m}^\delta \to h_{k,m}$ and $g_{k,m}^\delta \to g_{k,m}$ as $\delta \to 0$. Since $k < \hat{k}$, therefore $k < k_\delta$ for all sufficiently small $\delta$. We first show that $t_{k,m}^\delta g_{k,m}^\delta \to t_{k,m}g_{k,m}$. For this we consider two cases. \\
\emph{Case 1.} Suppose $h_{k,m} = 0$. In this case, $g_{k,m} = 0$ and $t_{k,m} = 0$. Since $0 \leq t_{k,m}^\delta \leq \mu_1$, we have
\[
\|t_{k,m}^\delta g_{k,m}^\delta - t_{k,m}g_{k,m}\| = \|t_{k,m}^\delta g_{k,m}^\delta\| \leq \mu_1\|g_{k,m}^\delta\| \to 0.
\]
\emph{Case 2.} Suppose $h_{k,m}\neq0$. Then, $\gamma_{k,m}\|h_{k,m}\|^2>0$. Consequently $\gamma_{k,m}\|h_{k,m}^\delta\|^2\to\gamma_{k,m}\|h_{k,m}\|^2>0$. Since $\tau^2\delta_i^2 \to 0$ as $\delta \to 0$, we have $\gamma_{k,m}\|h_{k,m}^\delta\|^2 > \tau^2\delta_i^2$ for all sufficiently small $\delta$. Thus, 
\[t_{k,m}^\delta=\min\bigg\{\frac{\mu_0\|h_{k,m}^\delta\|^2}{\|g_{k,m}^\delta\|^2},\mu_1\bigg\}.\]
If $g_{k,m}\ne0$, then $g_{k,m}^\delta\to g_{k,m}$ implies $\|g_{k,m}^\delta\|\to\|g_{k,m}\|>0$. 
Therefore, by the continuity of the map $s\mapsto \min\{s,\mu_1\}$, we obtain
\[\min\left\{\frac{\mu_0\|h_{k,m}^\delta\|^2}{\|g_{k,m}^\delta\|^2},\mu_1\right\}\to \min\left\{\frac{\mu_0\|h_{k,m}\|^2}{\|g_{k,m}\|^2},\mu_1\right\}.\]
So, $t_{k,m}^\delta\to t_{k,m}$, which implies $t_{k,m}^\delta g_{k,m}^\delta\to t_{k,m}g_{k,m}$. On the other hand, if $g_{k,m} = 0$, then $g_{k,m}^\delta \to 0$. Thus, $\mu_0\|h_{k,m}^\delta\|^2/\|g_{k,m}^\delta\|^2$ tends to infinity  whenever $g_{k,m}^\delta \neq 0$. Therefore, for all sufficiently small $\delta$, we have $t_{k,m}^\delta = \mu_1$. Since $t_{k,m} = \mu_1$ when $g_{k,m} = 0$, it follows that
\[
t_{k,m}^\delta g_{k,m}^\delta = \mu_1 g_{k,m}^\delta \to 0 = \mu_1 g_{k,m} = t_{k,m}g_{k,m}.
\]
Combining \emph{Case 1} and \emph{Case 2}, we get $t_{k,m}^\delta g_{k,m}^\delta \to t_{k,m} g_{k,m}$ as $\delta\to0$. Using this  together with the assumption that $\zeta_{n,k,m}^\delta \to \zeta_{n,k,m}$, we obtain
\[
\begin{aligned}
\|\zeta_{n,k,m+1}^\delta - \zeta_{n,k,m+1}\| &\leq \|\zeta_{n,k,m}^\delta - \zeta_{n,k,m}\| + \|t_{k,m}^\delta g_{k,m}^\delta - t_{k,m}g_{k,m}\|\to 0 \;\;\text{as } \delta \to 0.
\end{aligned}
\]
Now, using the definitions of iterates, we have
\[ u^\delta_{n,k,m+1}=\nabla\mathfrak \mathfrak{f}_n^*(\zeta^\delta_{n,k,m+1})\;, \quad u_{n,k,m+1}=\nabla\mathfrak \mathfrak{f}_n^*(\zeta_{n,k,m+1}).\]
Hence, from the Lipschitz continuity of $\nabla\mathfrak \mathfrak{f}_n^*$, it follows that
\[\|u_{n,k,m+1}^\delta - u_{n,k,m+1}\| \leq \frac{1}{2\nu}\|\zeta_{n,k,m+1}^\delta - \zeta_{n,k,m+1}\|\to0 \;\;\text{as }\delta\to0.\]
We now prove item (ii) using analogous arguments. By assumption in (ii) and continuity of $\mathcal{T}_{i,n}^l$ for each $i$, we have $h^\delta_{k,i} \to h_{k,i}$ and $g^\delta_{k,i} \to g_{k,i}$, where
\[g_{k,i}^\delta=V_{l,i} B_{l+1,l,i}^{*}
\bigl(A_{k,i}\bigr)^{-1/2}h_{k,i}^\delta \;\text{ and }\; g_{k,i}=V_{l,i} B_{l+1,l,i}^{*}
\bigl(A_{k,i})^{-1/2} h_{k,i}.\]
\emph{Case 1.} Suppose $\Delta_k = \emptyset$. Then $h_{k,i} = g_{k,i} = 0$ for all $i$, which implies $g_k = 0$ and $t_k = 0$. Since $g^\delta_{k,i} \to 0$ and $0 \leq t^\delta_k \leq \mu_1$, we get
\[\|t^\delta_k g^\delta_k - t_k g_k\| = \|t^\delta_k g^\delta_k\| \leq \mu_1 \sum_{i=1}^q \|g^\delta_{k,i}\| \to 0 \quad \text{as } \delta\to0.\]
\emph{Case 2.} Suppose $\Delta_k\neq\emptyset$. Then, if $i\in\Delta_k$, we have
$\|h_{k,i}\|>0$ and hence $\gamma_k\|h_{k,i}\|^2>0$. Since
$h^\delta_{k,i}\to h_{k,i}$ and $\tau^2\delta_i^2\to0$, it follows that $\gamma_k\|h^\delta_{k,i}\|^2>\tau^2\delta_i^2$ for
all sufficiently small $\delta$.
Thus, $\Delta_k\subseteq\Delta^\delta_k$
for all sufficiently small $\delta$. On the other hand, if $i\notin\Delta_k$, then $h_{k,i}=0$. Since
$h^\delta_{k,i}\to h_{k,i}$, we have $h^\delta_{k,i}\to0$, and therefore $ g^\delta_{k,i}\to0$. Note that
\[g^\delta_k=\sum_{i\in\Delta^\delta_k}g^\delta_{k,i}=\sum_{i\in\Delta_k}g^\delta_{k,i}+\sum_{i\in\Delta^\delta_k\setminus\Delta_k}g^\delta_{k,i} \; \text{ and } \;\Phi^\delta_k=\sum_{i\in\Delta^\delta_k}\|h^\delta_{k,i}\|^2=
\sum_{i\in\Delta_k}\|h^\delta_{k,i}\|^2+\sum_{i\in\Delta^\delta_k\setminus\Delta_k}\|h^\delta_{k,i}\|^2.
\]
Therefore, 
\[g_k^\delta\to  \sum_{i\in\Delta_k}g_{k,i}=g_k \quad \text{ and }\quad \Phi_k^\delta\to\sum_{i\in\Delta_k}\|h_{k,i}\|^2=\Phi_k.\]
As $\Delta_k\neq\emptyset$, we have $\Phi_k>0$. If $g_k\neq0$, then $g^\delta_k\to g_k$ implies
$\|g^\delta_k\|\to\|g_k\|>0$. Consequently, 
\[t^\delta_k =\min\left\{
\frac{\mu_0\Phi^\delta_k}{\|g^\delta_k\|^2},\mu_1 \right\}\to\min\left\{
\frac{\mu_0\Phi_k}{\|g_k\|^2},\mu_1\right\} = t_k.
\]
This implies $t^\delta_k g^\delta_k\to t_k g_k$. On the other hand, if $g_k=0$, then $g^\delta_k\to0$. Because $\Phi^\delta_k\to\Phi_k>0$, the quotient
$\mu_0\Phi^\delta_k/\|g^\delta_k\|^2$ tends to infinity whenever
$g^\delta_k\neq0$. Thus, for all sufficiently small $\delta$, we have
$t^\delta_k=\mu_1$. As $t_k = \mu_1$ when $g_k = 0$, it follows that 
\[t^\delta_k g^\delta_k =\mu_1 g^\delta_k\to0=\mu_1 g_k = t_k g_k.
\]
Thus, from \emph{Case 1} and \emph{Case 2},  we have $t^\delta_k g^\delta_k\to t_k g_k$. Therefore, by the assumption $\zeta_{n,k}^\delta \to \zeta_{n,k}$, it follows that
\[
\begin{aligned}
\|\zeta_{n,k,0}^\delta - \zeta_{n,k,0}\| &\leq \|\zeta_{n,k}^\delta - \zeta_{n,k}\| + \|t_{k}^\delta g_{k}^\delta - t_{k}g_{k}\|\to 0 \quad \text{as } \delta \to 0. 
\end{aligned}
\]
Finally, the definitions $u^\delta_{n,k,0}=\nabla\mathfrak \mathfrak{f}_n^*(\zeta^\delta_{n,k,0})$, $u_{n,k,0}=\nabla\mathfrak \mathfrak{f}_n^*(\zeta_{n,k,0})$ and Lipschitz continuity of $\nabla\mathfrak \mathfrak{f}_n^*$ implies 
\[\|u^\delta_{n,k,0}-u_{n,k,0}\|\to0 \quad \text{as } \delta\to0.\]
This completes the proof.
\end{proof}
Building upon the stability result established in Lemma~\ref{lemma_stability_G_K},  monotonicity property in Lemma~\ref{lemma_mon_G_k}, and the convergence property of Algorithm~\ref{algo:G_K_exact_data_exact}, we now establish the convergence result for Algorithm~\ref{algo:G_K_noisy_data}. 

\begin{theorem}\label{theorem_main_G_K}
Consider the Algorithm~\ref{algo:G_K_noisy_data} with a bounded sequence $\{m_k\}$ and suppose the parameters $\tau > 1$ and $\mu_0 > 0$ are chosen so that \eqref{parameter_condition} holds. Then, there exists a random integer $k_\delta$, finite along every sample path, such that the Algorithm~\ref{algo:G_K_noisy_data} terminates after $k_\delta$ iterations. In addition,
\begin{equation}\label{eq:conv_pathwise}
\lim_{\delta \to 0} D_{\mathfrak{f}_n}^{\zeta_{n,k_\delta}^\delta}(u_{n,l}^\dagger, u_{n,k_\delta}^\delta) = 0
 \;, \quad
\lim_{\delta \to 0} \|u_{n,k_\delta}^\delta - u_{n,l}^\dagger\| = 0 \quad\text{almost surely},
\end{equation}
and
\begin{equation}\label{eq:conv_expectation}
\lim_{\delta \to 0} \mathbb{E}\bigg[D_{\mathfrak{f}_n}^{\zeta_{n,k_\delta}^\delta}(u_{n,l}^\dagger, u_{n,k_\delta}^\delta)\bigg] = 0
\;,\quad
\lim_{\delta \to 0} \mathbb{E}\big[\|u_{n,k_\delta}^\delta - u_{n,l}^\dagger\|^2\big] = 0.
\end{equation}
\end{theorem}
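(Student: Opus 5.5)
The finite termination part is exactly Lemma~\ref{lemma_finite_termination_G_K}, so the plan concentrates on the two convergence statements. Fix a sample path in the probability-one event $\Omega_0$ of Lemma~\ref{lemma_prob1_G_K}. Along this path, Theorem~\ref{Theorem_G_K_exact} gives $D_{\mathfrak f_n}^{\zeta_{n,k}}(u_{n,l}^\dagger,u_{n,k})\to0$. We then take an arbitrary sequence $\delta_j\to0$ and argue along a subsequence, splitting into the two standard cases for discrepancy-type stopping rules. Throughout we set $\hat k=\liminf_{j}k_{\delta_j}$.

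\emph{Case $\hat k<\infty$.} Passing to a subsequence, we may take $k_{\delta_j}=\hat k$ for all $j$. By Lemma~\ref{lemma_stability_G_K}, we have $u_{n,\hat k}^{\delta_j}\to u_{n,\hat k}$ and $\zeta_{n,\hat k}^{\delta_j}\to\zeta_{n,\hat k}$. The stopping test \eqref{DP_G_K} holds at $\hat k$, so $\gamma_{\hat k}\|h_{\hat k,i}^{\delta_j}\|^2\le\tau^2\delta_i^2\to0$ for every $i$. Passing to the limit gives $h_{\hat k,i}=0$. As in the proof of Theorem~\ref{Theorem_G_K_exact}, this yields $\mathcal T_{i,n}^l u_{n,\hat k}=v_{i,n}$ for all $i$. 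Once all residuals vanish, the exact-data iteration is stationary: every step size is zero, so $\zeta_{n,k}=\zeta_{n,\hat k}$ for all $k\ge\hat k$. Hence $u_{n,\hat k}=\lim_k u_{n,k}=u_{n,l}^\dagger$.

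To conclude the Bregman convergence in this case, we use
\[
D_{\mathfrak f_n}^{\zeta}(\bar u,u)=\mathfrak f_n(\bar u)+\mathfrak f_n^*(\zeta)-\langle\zeta,\bar u\rangle .
\]
Since $\mathfrak f_n^*$ is continuous, $D_{\mathfrak f_n}^{\zeta_{n,\hat k}^{\delta_j}}(u_{n,l}^\dagger,u_{n,\hat k}^{\delta_j})\to D_{\mathfrak f_n}^{\zeta_{n,\hat k}}(u_{n,l}^\dagger,u_{n,\hat k})=0$.

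\emph{Case $\hat k=\infty$.} Given $\varepsilon>0$, choose $K$ such that $D_{\mathfrak f_n}^{\zeta_{n,K}}(u_{n,l}^\dagger,u_{n,K})<\varepsilon$. For large $j$ we have $k_{\delta_j}>K$. Lemma~\ref{lemma_mon_G_k} applies with $\hat u_n=u_{n,l}^\dagger\in\mathcal S_{n,l}$ and shows that the noisy Bregman distances are nonincreasing up to $k_\delta$. Therefore
\[
D_{\mathfrak f_n}^{\zeta^{\delta_j}_{n,k_{\delta_j}}}(u_{n,l}^\dagger,u^{\delta_j}_{n,k_{\delta_j}})\le D_{\mathfrak f_n}^{\zeta^{\delta_j}_{n,K}}(u_{n,l}^\dagger,u^{\delta_j}_{n,K}).
\]
By Lemma~\ref{lemma_stability_G_K} and continuity of $\mathfrak f_n^*$, the right-hand side tends to the exact value, which is below $\varepsilon$.

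The subsequence principle then gives the Bregman limit in \eqref{eq:conv_pathwise}, and the norm limit follows from \eqref{bregman_norm_relation}. The main obstacle is the case $\hat k<\infty$. There one must justify that vanishing of all the $h_{\hat k,i}$ freezes the exact iteration, so that its limit is attained at $\hat k$. One must also handle the fact that $\hat k$ varies with the subsequence chosen. This is why the argument is run along subsequences with $k_{\delta_j}$ constant.

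For \eqref{eq:conv_expectation} we use dominated convergence. Lemma~\ref{lemma_mon_G_k}, with $\hat u_n=u_{n,l}^\dagger$, bounds the stopped Bregman distance by $\Gamma_{n,0}=D_{\mathfrak f_n}^{\zeta_{n,0}}(u_{n,l}^\dagger,u_{n,0})$. This bound is deterministic and finite. By \eqref{bregman_norm_relation}, $\|u^\delta_{n,k_\delta}-u_{n,l}^\dagger\|^2\le\Gamma_{n,0}/\nu$. Combined with the almost sure convergence, both expectations tend to zero.
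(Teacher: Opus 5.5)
Your proposal is correct and follows the paper's proof essentially step by step. It uses the same split on whether $\hat k$ is finite, the same combination of the stability lemma and the discrepancy test to show that the exact iteration freezes at $u_{n,\hat k}=u_{n,l}^\dagger$, the same monotonicity argument when $\hat k=\infty$, and dominated convergence with the deterministic bound $\Gamma_{n,0}$ for the expectations; the only differences are minor (arguably cleaner) choices: you work along arbitrary sequences $\delta_j\to0$ with the subsequence principle instead of the paper's $\liminf_{\delta\to0}k_\delta$ plus monotonicity from $\hat k$ to $k_\delta$, and you pass to the limit in the Bregman distance via continuity of $\mathfrak f_n^*$ rather than lower semicontinuity of $\mathfrak f_n$.
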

\begin{proof}
Let $\{u_{n,k},\zeta_{n,k}\}_{k\ge0}$ be the iterative sequence given by Algorithm~\ref{algo:G_K_exact_data_exact}, with the same parameters $\mu_0, \mu_1$ and the same sequence $\{m_k\}$. Let $\Omega_0$ be the probability one event defined in \eqref{Prob_one_event}. Then, by Theorem~\ref{Theorem_G_K_exact}, along every sample path in $\Omega_0$, we have
\[D^{\zeta_{n,k}}_{\mathfrak{f}_n}(u_{n,l}^\dagger,u_{n,k})\to0,\quad\|u_{n,k}-u_{n,l}^\dagger\|\to0 \quad \text{as } k\to\infty .
\]
Set $\Gamma_{n,k}^\delta := D_{\mathfrak{f}_n}^{\zeta_{n,k}^\delta}(u_{n,l}^\dagger, u_{n,k}^\delta)$. We first show that $\Gamma_{n,k_\delta}^\delta\to0$ as $\delta\to0$ pathwise on $\Omega_0$. For this, let we fix an arbitrary sample path in $\Omega_0$ and define $\hat{k}:= \liminf_{\delta\to0} k_\delta$. We consider the two possible cases.\\
\emph{Case 1.} $\hat{k}<\infty$. In this case $\hat{k}\leq k_\delta$ for sufficiently small $\delta$. Moreover, there exists a sequence of noisy data $\{v_n^{\delta_j}\} = \{(v_{1,n}^{\delta_j}, \ldots, v_{q,n}^{\delta_j})\}$ satisfying
$\|v_{i,n}^{\delta_j} - v_{i,n}\| \leq \delta_i^{j},\; i = 1, \ldots, q,$
where $\delta^j := \sqrt{(\delta_{1}^j)^2+\ldots+(\delta_q^j)^2} \to 0$ as $j \to \infty$, such that $k_{\delta^j} = \hat{k}$ for all sufficiently large $j$. Applying Lemma~\ref{lemma_stability_G_K} along the fixed sample path, we have \[\zeta_{n,\hat{k}}^{\delta^j}\to\zeta_{n,\hat{k}}, \quad u_{n,\hat{k}}^{\delta^j}\to u_{n,\hat{k}} \quad \text{as } j \to \infty.
\] Since the noisy iterations stop as $k_{\delta^j}=\hat{k}$, for each $i=1,\ldots,q$, we have
\[\gamma_{\hat k}\big\|A_{\hat k,i}^{-1/2}U_{l+1,i}^{*}\big(\mathcal{T}_{i,n}^lu_{n,\hat k}^{\delta^j}-v_{i,n}^{\delta^j}\big)\big\|^2\le\tau^2(\delta_i^{j})^2.\]
Letting $j\to\infty$ and using $v_{i,n}^{\delta^j}\to v_{i,n}$, we get $A_{\hat k,i}^{-1/2}U_{l+1,i}^{*}\big(\mathcal{T}_{i,n}^lu_{n,\hat k}-v_{i,n}\big)=0$. 
Since $(A_{\hat{k},i})^{-1/2}$ is strictly positive matrix, we have $U_{l+1,i}^{*}\big(\mathcal{T}_{i,n}^lu_{n,\hat k}-v_{i,n}\big)=0$. Note that $(\mathcal{T}_{i,n}^lu_{n,\hat k}-v_{i,n})\in \operatorname{Ran}(U_{l+1,i})$, therefore it follows that $(\mathcal{T}_{i,n}^lu_{n,\hat k}-v_{i,n})=0$ for each $i=1,\ldots,q$. Consequently, by Algorithm~\ref{algo:G_K_exact_data_exact}, 
\[u_{n,k}=u_{n,\hat{k}} \quad \text{and} \quad \zeta_{n,k}=\zeta_{n,\hat{k}} \quad \forall \; k\geq\hat{k}.\]
Since by Theorem~\ref{Theorem_G_K_exact}, $u_{n,k}\to u_{n,l}^\dagger$ as $k\to\infty$, therefore $u_{n,\hat{k}}=u_{n,l}^\dagger$. Now using the Lemma~\ref{lemma_mon_G_k} and Lemma~\ref{lemma_stability_G_K} together with the definition of lower-semicontinuity, we obtain
\begin{align*}
\limsup_{\delta \to 0}\, \Gamma_{n,k_\delta}^\delta
&\leq \limsup_{\delta \to 0}\, \Gamma_{n,\hat{k}}^\delta \\
&= \limsup_{\delta \to 0} \left(\mathfrak{f}_n(u_{n,l}^\dagger) -\mathfrak{f}_n(u_{n,\hat{k}}^\delta) - \langle \zeta_{n,\hat{k}}^\delta,\, u_{n,l}^\dagger - u_{n,\hat{k}}^\delta \rangle\right) \\
&= \mathfrak{f}_n(u_{n,l}^\dagger) - \mathfrak{f}_n(u_{n,\hat{k}}) - \langle \zeta_{n,\hat{k}},\, u_{n,l}^\dagger - u_{n,\hat{k}} \rangle = 0,
\end{align*}
and hence $\Gamma_{n,k_\delta}^\delta\to 0$ as $\delta\to 0$.\\
\emph{Case 2.} $\hat{k}=\infty$. Let $k\geq0$ be a fixed integer, then $k<k_\delta$ for sufficiently small $\delta$. Applying Lemma~\ref{lemma_mon_G_k}
repeatedly from the $k$-th iterate up to the stopping index $k_\delta$, we obtain $\Gamma^\delta_{n,k_\delta}\le\Gamma^\delta_{n,k}$. Using this with Lemma~\ref{lemma_stability_G_K} and the lower-semicontinuity of $\mathfrak{f}_n$, we get
\[
\limsup_{\delta\to0}\Gamma^\delta_{n,k_\delta}
\leq
\limsup_{\delta\to0}\Gamma^\delta_{n,k}
\leq
D^{\zeta_{n,k}}_{\mathfrak{f}_n}(u_{n,l}^\dagger,u_{n,k}).
\]
Since this holds for every fixed $k$ and by Theorem~\ref{Theorem_G_K_exact}, 
\[D_{\mathfrak{f}_n}^{\zeta_{n,k}}(u_{n,l}^{\dagger}, u_{n,k}) \to 0 \quad \text{as} \quad k \to \infty,\] 
letting $k\to\infty$ we get $\limsup_{\delta\to0}\Gamma^\delta_{n,k_\delta}\leq0$. Hence $\Gamma^\delta_{n,k_\delta}\to0$ as $\delta\to0$.\\
Now, from \emph{Case 1} and \emph{Case 2}  $\Gamma^\delta_{n,k_\delta}\to0$ as $\delta\to0$ along any sample path in $\Omega_0$. Therefore, $\Gamma^\delta_{n,k_\delta}\to0$ as $\delta\to0$ almost surely. Consequently, the $\nu$-strong convexity of $\mathfrak{f}_n$ \eqref{bregman_norm_relation} implies
\[\|u^\delta_{n,k_\delta}-u_{n,l}^\dagger\|\to 0 \quad \text{as}\;\; \delta \to 0 \quad \text{almost surely}.\] 
To prove the convergence in expectation, we note that $\Gamma_{n,k_\delta}^\delta \leq \Gamma_{n,0}^\delta =D_{\mathfrak{f}_n}^{\zeta_{n,0}}(u_{n,l}^{\dagger},u_{n,0})$. Therefore using dominated convergence theorem, $\lim_{\delta\to0}\mathbb E\left[\Gamma_{n,k_\delta}^\delta\right]=0$. Finally, using the $\nu$-strong convexity of $\mathfrak{f}_n$ again, we have $\nu\mathbb E\big[\|u^\delta_{n,k_\delta}-u_{n,l}^\dagger\|^2\big]\le\mathbb E\left[\Gamma_{n,k_\delta}^\delta\right]$. Hence, $\lim_{\delta\to0}\mathbb E\big[\|u^\delta_{n,k_\delta}-u_{n,l}^\dagger\|^2\big]=0$. This completes the proof.
\end{proof}
Our next section is devoted to the analysis of \texttt{RIAT} method. Since the Arnoldi-based scheme has the same iterative structure as its Golub--Kahan counterpart, the convergence analysis can be developed using analogous arguments. To avoid unnecessary repetition, we present the corresponding implementable algorithm and state the principal convergence and regularization results, while omitting proofs that follow directly from the analysis of \texttt{RIGKT}.

\section{The \texttt{RIAT} Framework: Algorithm and Results}\label{section_con_analysis_Arnoldi} 
Building upon the analytical framework introduced in Section~\ref{section_con_analysis_G_K}, this section details the algorithmic implementation of the \texttt{RIAT} method for square systems and states its primary theoretical properties. Specifically, we present the \texttt{RIAT} algorithm alongside its corresponding monotonicity, finite-termination, stability, and regularization results, emphasizing the key modifications introduced by the Arnoldi projection.\\
Throughout this section, all the Golub–Kahan quantities $(\mathcal{T}^l_{i,n},\, V_{l,i},\, U_{l+1,i},\, B_{l+1,l,i})$ are replaced by their respective Arnoldi counterparts $(\tilde{\mathcal{T}}^l_{i,n},\, \tilde{V}_{l,i},\, \tilde{V}_{l+1,i},\, H_{l+1,l,i})$. The noise bound $\Vert{}v_{i,n}^{\delta}-v_{i,n}\Vert{}\leq \delta_i$ for $i=1,\ldots,q$, parameter conditions \eqref{parameter_condition} for $\tau > 1$, $\mu_0, \mu_1 > 0$, and sequence condition \eqref{sequence_condition} for $\{\gamma_j\}_{j\ge 0}$ remain identical to those in Section~\ref{section_con_analysis_G_K}.
We employ the following algorithm for implementing \texttt{RIAT}.

\begin{breakablealgorithm}
\caption{\texttt{RIAT} with \emph{a posteriori} stopping}
\label{algo:Arnoldi_noisy_data}
\begin{algorithmic}[1]
\State Input: $\tau > 1$, $\mu_0 > 0$, $\mu_1 > 0$ and $\zeta_{n,0} \in \mathcal{U}_n$.
\State Compute $u_{n,0} = \arg\min_{u_n \in \mathcal{U}_n}
\bigl\{\mathfrak{f}_n(u_n) - \langle \zeta_{n,0},u_n\rangle\bigr\}$ and set
\Statex \hspace{\algorithmicindent}
$\zeta_{n,0,0}^\delta = \zeta_{n,0}$,\quad
$u_{n,0,0}^\delta = u_{n,0}$.
\For{$k = 0,1,\ldots,$} (\textbf{Outer loop})
\State Pick a fixed positive integer $m_k$.
\For{$m = 0,\ldots,m_k-1$} (\textbf{Inner loop})
\State Draw $i_{k,m} \in \{1,\ldots,q\}$ uniformly at random and compute
\begin{align*}
\tilde h_{k,m}^\delta
&=
\big(\tilde A_{i_{k,m}}\big)^{-1/2}
\tilde V_{l+1,i_{k,m}}^{*}
\big(
\tilde{\mathcal T}_{i_{k,m},n}^{\,l}u_{n,k,m}^\delta
-
v_{i_{k,m},n}^\delta
\big),
\\
\tilde g_{k,m}^\delta
&=
\tilde V_{l,i_{k,m}}
H_{l+1,l,i_{k,m}}^{*}
\big(\tilde A_{i_{k,m}}\big)^{-1}
\tilde V_{l+1,i_{k,m}}^{*}
\big(
\tilde{\mathcal T}_{i_{k,m},n}^{\,l}u_{n,k,m}^\delta
-
v_{i_{k,m},n}^\delta
\big),
\end{align*}
\Statex \hspace{\algorithmicindent}
\hspace{0.5cm} where
$\tilde A_{i_{k,m}}
:=
\gamma_{k,m}I
+
H_{l+1,l,i_{k,m}}H_{l+1,l,i_{k,m}}^{*}.$
\State Set step-size
\[
\tilde t_{k,m}^\delta =
\begin{cases}
\min\!\left\{
\dfrac{\mu_0\,\|\tilde h_{k,m}^\delta\|^2}
{\|\tilde g_{k,m}^\delta\|^2},\;
\mu_1
\right\}
& \text{if } \gamma_{k,m}\|\tilde h_{k,m}^\delta\|^2
> \tau^2\delta_{i_{k,m}}^2, \\[8pt]
0 & \text{otherwise.}
\end{cases}
\]
\State Set
$\zeta_{n,k,m+1}^\delta
=
\zeta_{n,k,m}^\delta
-
\tilde t_{k,m}^\delta\tilde g_{k,m}^\delta$,
and compute
\[
u_{n,k,m+1}^\delta
=
\arg\min_{u_n \in \mathcal{U}_n}
\bigl\{
\mathfrak{f}_n(u_n)
-
\langle \zeta_{n,k,m+1}^\delta,u_n\rangle
\bigr\}.
\]
\EndFor
\State Outer update: Set
$\zeta_{n,k+1}^\delta = \zeta_{n,k,m_k}^\delta$ \text{ and }
$u_{n,k+1}^\delta = u_{n,k,m_k}^\delta$.
\Statex \hspace{\algorithmicindent} For each $i\in\{1,\ldots,q\}$, set 
\[\tilde A_{k,i}: = \gamma_{k}I + H_{l+1,l,i}H_{l+1,l,i}^{*} \text{ and } \tilde h_{k,i}^{\delta}
:= (\tilde A_{k,i})^{-1/2}\tilde V_{l+1,i}^{*}(\tilde{\mathcal T}_{i,n}^{\,l}u_{n,k}^{\delta}-v_{i,n}^{\delta}\bigr).\]
\State Stopping check: \textbf{If}
\begin{equation}\label{DP_Arnoldi}
\gamma_{k+1}\big\|\tilde{h}_{k+1,i}^\delta\big\|^2\leq\tau^2\delta_i^2\quad \forall\, i \in \{1,\ldots,q\},
\end{equation}
\Statex \hspace{\algorithmicindent}
then output $u_{n,k_\delta}^\delta := u_{n,k+1}^\delta$ and stop,
\Statex \hspace{\algorithmicindent}
\textbf{else} compute
\begin{align*}
\tilde\Delta_{k+1}^\delta
&=
\Bigl\{
i \in \{1,\ldots,q\} :
\gamma_{k+1}
\big\|\tilde{h}_{k+1,i}^\delta
\big\|^2
>
\tau^2\delta_i^2
\Bigr\},
\\
\tilde g_{k+1}^\delta
&=
\sum_{i \in \tilde\Delta_{k+1}^\delta}
\tilde V_{l,i}
H_{l+1,l,i}^{*}
\bigl(\tilde A_{k+1,i}\bigr)^{-1/2}\tilde{h}_{k+1,i}^\delta,
\\
\tilde\Phi_{k+1}^\delta
&=
\sum_{i \in \tilde\Delta_{k+1}^\delta}
\big\|\tilde{h}_{k+1,i}^\delta\big\|^2.
\end{align*}
\State Outer step-size: Set
\[
\tilde t_{k+1}^\delta =
\begin{cases}
\min\!\left\{
\dfrac{\mu_0\,\tilde\Phi_{k+1}^\delta}
{\|\tilde g_{k+1}^\delta\|^2},\;
\mu_1
\right\}
& \text{if } \tilde\Delta_{k+1}^\delta \neq \emptyset,\\[8pt]
0
& \text{if } \tilde\Delta_{k+1}^\delta = \emptyset.
\end{cases}
\]
\State Outer update: Set
$\zeta_{n,k+1,0}^\delta
=
\zeta_{n,k+1}^\delta
-
\tilde t_{k+1}^\delta\tilde g_{k+1}^\delta$,
and compute
\[
u_{n,k+1,0}^\delta
=
\arg\min_{u_n \in \mathcal{U}_n}
\bigl\{
\mathfrak{f}_n(u_n)
-
\langle \zeta_{n,k+1,0}^\delta,u_n\rangle
\bigr\}.
\]
\EndFor
\end{algorithmic}
\end{breakablealgorithm}

Now, we state the key results about the algorithm which are useful for establishing the regularization property of \texttt{RIAT} method.
\begin{lemma}\label{lemma_mon_Arnoldi}
Consider the iterative procedure in Algorithm~\ref{algo:Arnoldi_noisy_data} and assume that the parameters $\tau>1$ and $\mu_0>0$ are chosen so that \eqref{parameter_condition} holds. Let $\hat u_n\in \tilde{\mathcal S}_{n,l}$, then, for all $k\leq k_{\delta}$ and $m=0,\ldots,m_k-1$, the following monotonicity estimates hold.
\begin{equation}\label{mono_Arnoldi_algo_1}\tilde\Gamma_{n,k,m+1}^{\delta}-\tilde\Gamma_{n,k,m}^{\delta}\leq -\mathfrak{C}_0\tilde t_{k,m}^{\delta}\|\tilde h_{k,m}^{\delta}\|^2 \quad \text{and}\quad \tilde\Gamma_{n,k+1,0}^{\delta}-\tilde\Gamma_{n,k+1}^{\delta}\leq -\mathfrak{C}_0\tilde t_{k+1}^{\delta}\tilde\Phi_{k+1}^{\delta},
\end{equation}

where $\tilde\Gamma_{n,k,m}^{\delta}:=D_{\mathfrak{f}_n}^{\zeta_{n,k,m}^{\delta}}(\hat u_n,u_{n,k,m}^{\delta}),\quad \text{and}\quad\tilde\Gamma_{n,k}^{\delta}:=D_{\mathfrak{f}_n}^{\zeta_{n,k}^{\delta}}(\hat u_n,u_{n,k}^{\delta}).$
\end{lemma}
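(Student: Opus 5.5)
The plan is to transcribe the proof of Lemma~\ref{lemma_mon_G_k} with each Golub--Kahan object replaced by its Arnoldi counterpart. Only two structural facts are needed, and both hold for the Arnoldi decomposition:
\begin{itemize}
\item[(a)] $\tilde{\mathcal T}_{i,n}^l=\tilde V_{l+1,i}H_{l+1,l,i}\tilde V_{l,i}^*$, where $\tilde V_{l,i}$ and $\tilde V_{l+1,i}$ have orthonormal columns, so $\|\tilde V_{l,i}\|\le1$ and $\|\tilde V_{l+1,i}^*\|\le1$;
\item[(b)] $\tilde V_{l+1,i}^*\tilde V_{l+1,i}=I$.
\end{itemize}
As in Lemma~\ref{lemma_mon_G_k}, I first prove by induction over the inner steps $(k,m)$ and the outer corrections $(k+1,0)$ that $\tilde\Gamma^\delta_{n,k,m}\le\tilde\Gamma_{n,0}$. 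This is done through the two implications (i) and (ii), each of which reduces to a one-step decrease estimate. Once those one-step estimates are established, the claimed inequalities \eqref{mono_Arnoldi_algo_1} follow.

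For a single step, write $\zeta^+=\zeta-t\,g$ and $u=\nabla\mathfrak f_n^*(\zeta)$. By \eqref{subdifferential_relation} and the first inequality in \eqref{taylor_first_order_approximation_fenchel_dual},
\[
D^{\zeta^+}_{\mathfrak f_n}(\hat u_n,u^+)-D^{\zeta}_{\mathfrak f_n}(\hat u_n,u)\le\frac{t^2}{4\nu}\|g\|^2-t\langle g,u-\hat u_n\rangle .
\]
Consider first the outer step, with $g=\tilde g^\delta_{k+1}$. I move the adjoint across the inner product:
\[
\langle \tilde V_{l,i}H^*_{l+1,l,i}\tilde A_{k+1,i}^{-1/2}\tilde h^\delta_{k+1,i},\,u-\hat u_n\rangle
=\langle \tilde h^\delta_{k+1,i},\,\tilde A_{k+1,i}^{-1/2}H_{l+1,l,i}\tilde V_{l,i}^*(u-\hat u_n)\rangle .
\]
Next I insert $\tilde V_{l+1,i}^*\tilde V_{l+1,i}=I$ in front of $H_{l+1,l,i}\tilde V_{l,i}^*$. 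Using $\tilde{\mathcal T}^l_{i,n}\hat u_n=v_{i,n}$, this rewrites the right-hand side as $\langle \tilde h^\delta_{k+1,i},\tilde A^{-1/2}_{k+1,i}\tilde V^*_{l+1,i}(\tilde{\mathcal T}^l_{i,n}u-v_{i,n})\rangle$. Splitting $v_{i,n}=v^\delta_{i,n}-(v^\delta_{i,n}-v_{i,n})$ then gives $\|\tilde h^\delta_{k+1,i}\|^2$ minus a noise term. By \eqref{operator_bounds} and $\|\tilde V^*_{l+1,i}\|\le1$, the noise term is bounded by $\delta_i\gamma_{k+1}^{-1/2}\|\tilde h^\delta_{k+1,i}\|$. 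For $i\in\tilde\Delta^\delta_{k+1}$ this is less than $\tau^{-1}\|\tilde h^\delta_{k+1,i}\|^2$. Summing over $i$ yields $\langle g,u-\hat u_n\rangle\ge(1-1/\tau)\tilde\Phi^\delta_{k+1}$. The step-size rule gives $t\|g\|^2\le\mu_0\tilde\Phi^\delta_{k+1}$, and combining the two estimates produces $-\mathfrak C_0\tilde t^\delta_{k+1}\tilde\Phi^\delta_{k+1}$.

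The inner step is handled the same way, using a single index $i_{k,m}$ and the condition $\gamma_{k,m}\|\tilde h^\delta_{k,m}\|^2>\tau^2\delta^2_{i_{k,m}}$. The case $\tilde t^\delta_{k,m}=0$ is trivial.

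The only genuine point to check, and the main obstacle, is that $\tilde g^\delta_{k,m}$ in Algorithm~\ref{algo:Arnoldi_noisy_data} is written as $\tilde V_{l,i}H^*_{l+1,l,i}\tilde A^{-1}\tilde V^*_{l+1,i}(\cdots)$. This equals $\tilde V_{l,i}H^*_{l+1,l,i}\tilde A^{-1/2}\tilde h^\delta_{k,m}$, which is exactly the form used in the adjoint computation above. I also need $\|\tilde g^\delta_{k,m}\|\le\|\tilde h^\delta_{k,m}\|$ so that the step size is well defined. This follows from $\|H^*\tilde A^{-1/2}\|\le1$ (by \eqref{operator_bounds}) and $\|\tilde V_{l,i}\|\le1$.

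Unlike Golub--Kahan, the Arnoldi factorization uses the same basis $\tilde V$ on both sides, so I will make sure only $\tilde V_{l+1,i}^*\tilde V_{l+1,i}=I$ is invoked, never a commutation property. With that checked, the remaining argument is identical to the proof of Lemma~\ref{lemma_mon_G_k}.
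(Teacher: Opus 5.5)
Your proposal is correct and is essentially the paper's proof: the paper also transfers Lemma~\ref{lemma_mon_G_k} by letting $\tilde V_{l,i}H_{l+1,l,i}^{*}(\gamma I+H_{l+1,l,i}H_{l+1,l,i}^{*})^{-1/2}$ replace its Golub--Kahan counterpart and using \eqref{operator_bounds}. You additionally write out two steps the paper leaves implicit, namely the adjoint identity via $\tilde V_{l+1,i}^*\tilde V_{l+1,i}=I$ and the rewriting of the inner $\tilde g^\delta_{k,m}$ in the form $\tilde V_{l,i}H_{l+1,l,i}^{*}\tilde A^{-1/2}\tilde h^\delta_{k,m}$; one quibble is that $\|\tilde g^\delta_{k,m}\|\le\|\tilde h^\delta_{k,m}\|$ is not needed here, since the estimate only uses $\tilde t^\delta_{k,m}\|\tilde g^\delta_{k,m}\|^2\le\mu_0\|\tilde h^\delta_{k,m}\|^2$ from the $\min$, and that bound gives the lower bound $\tilde t^\delta_{k,m}\ge\min\{\mu_0,\mu_1\}$ used for finite termination rather than well-definedness of the step size.
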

\begin{proof}
The proof follows analogous arguments as in Lemma~\ref{lemma_mon_G_k} with the  Golub--Kahan identity replaced by the Arnoldi identity $\tilde{\mathcal T}_{i,n}^{l} =\tilde  V_{l+1,i}H_{l+1,l,i}\tilde V_{l,i}^{*}$. Consequently, the term $\tilde V_{l,i}H_{l+1,l,i}^{*}(\gamma I+H_{l+1,l,i}H_{l+1,l,i}^{*})^{-1/2}$ plays the role of $V_{l,i}B_{l+1,l,i}^{*}(\gamma I+B_{l+1,l,i}B_{l+1,l,i}^{*})^{-1/2}$. Using the estimate \eqref{operator_bounds} together with reasoning similar to the proof of Lemma~\ref{lemma_mon_G_k}, we obtain \eqref{mono_Arnoldi_algo_1}.
\end{proof}

\begin{lemma}\label{lemma_finite_termination_Arnoldi}
Let the parameters $\tau >1  \text{ and } \mu_0>0$ be chosen so that \eqref{parameter_condition} is satisfied. Then, Algorithm~\ref{algo:Arnoldi_noisy_data} terminates in finitely many steps. 
\end{lemma}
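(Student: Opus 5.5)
The plan is to repeat, pathwise, the contradiction argument from Lemma~\ref{lemma_finite_termination_G_K}, with the Golub--Kahan quantities replaced by their Arnoldi counterparts. Fix an arbitrary sample path $\{i_{k,m}\}$ and suppose that the stopping test \eqref{DP_Arnoldi} fails at every outer index. Then $\tilde\Delta_k^\delta\neq\emptyset$ for all $k\geq1$. Choosing any index $i_k\in\tilde\Delta_k^\delta$ gives
\[
\gamma_k\tilde\Phi_k^\delta\geq\gamma_k\|\tilde h_{k,i_k}^\delta\|^2>\tau^2\delta_{i_k}^2\geq\tau^2\delta_{\min}^2 .
\]

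\textbf{Step 1: summability.} I will apply Lemma~\ref{lemma_mon_Arnoldi} with any fixed $\hat u_n\in\tilde{\mathcal S}_{n,l}$. Telescoping over one outer cycle, i.e. the outer correction followed by the $m_k$ inner steps, each contributing a nonpositive increment, yields
\[
\tilde\Gamma_{n,k+1}^\delta-\tilde\Gamma_{n,k}^\delta\leq-\mathfrak C_0\tilde t_k^\delta\tilde\Phi_k^\delta .
\]
Summing in $k$ then gives
\[
\sum_{k\geq1}\tilde t_k^\delta\tilde\Phi_k^\delta\leq\frac{\tilde\Gamma_{n,0}}{\mathfrak C_0}<\infty .
\]

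\textbf{Step 2: lower bound on the step size.} Next I show that $\tilde t_k^\delta$ is bounded below whenever $\tilde\Delta_k^\delta\neq\emptyset$. Since $\|\tilde V_{l,i}\|\leq1$, the columns of $\tilde V_{l,i}$ being orthonormal, and by \eqref{operator_bounds} applied with $A=H_{l+1,l,i}$, each summand of $\tilde g_k^\delta$ satisfies
\[
\|\tilde V_{l,i}H_{l+1,l,i}^*(\tilde A_{k,i})^{-1/2}\tilde h_{k,i}^\delta\|\leq\|\tilde h_{k,i}^\delta\|.
\]
Cauchy--Schwarz then gives $\|\tilde g_k^\delta\|^2\leq q\tilde\Phi_k^\delta$, and hence $\tilde t_k^\delta\geq\bar\mu=\min\{\mu_0/q,\mu_1\}$.

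\textbf{Step 3: contradiction.} Combining the two steps,
\[
\sum_{k=1}^N\tilde t_k^\delta\tilde\Phi_k^\delta\geq\bar\mu\tau^2\delta_{\min}^2\sum_{k=1}^N\frac{1}{\gamma_k}.
\]
The right-hand side diverges by \eqref{sequence_condition}, contradicting Step 1. So a finite $k_\delta$ exists along every path.

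\textbf{Main obstacle.} The only delicate point is making sure the Arnoldi analogue of the monotonicity identity is valid. Specifically, the pairing $\langle\tilde g_{k,i}^\delta,u-\hat u_n\rangle$ must reduce to $\langle\tilde h_{k,i}^\delta,(\tilde A_{k,i})^{-1/2}\tilde V_{l+1,i}^*(\tilde{\mathcal T}^l_{i,n}u-v_{i,n})\rangle$. This uses $\tilde{\mathcal T}^l_{i,n}=\tilde V_{l+1,i}H_{l+1,l,i}\tilde V_{l,i}^*$ together with $\tilde V_{l+1,i}^*\tilde V_{l+1,i}=I$, and this is exactly what Lemma~\ref{lemma_mon_Arnoldi} provides.

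Two further points need checking. First, the inner-loop vector $\tilde g_{k,m}^\delta$ as written in Algorithm~\ref{algo:Arnoldi_noisy_data}, which uses $(\tilde A)^{-1}\tilde V^*(\cdots)$, coincides with $\tilde V H^*(\tilde A)^{-1/2}\tilde h$, so the same bound applies to it. Second, the assumption $\delta_{\min}>0$ is needed, as in the Golub--Kahan case. Beyond this, the argument is a verbatim transcription of the proof of Lemma~\ref{lemma_finite_termination_G_K}.
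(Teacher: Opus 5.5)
Your proposal is correct and follows the paper's intended argument exactly. The paper's proof of Lemma~\ref{lemma_finite_termination_Arnoldi} just defers to Lemma~\ref{lemma_finite_termination_G_K}, and your transcription matches it step for step: telescoped monotonicity from Lemma~\ref{lemma_mon_Arnoldi}, the bound $\|\tilde g_k^\delta\|^2\le q\tilde\Phi_k^\delta$ giving $\tilde t_k^\delta\ge\bar\mu$, and divergence of $\sum_k 1/\gamma_k$, including the implicit requirement $\delta_{\min}>0$.

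One small refinement: the outer $\gamma_k$ form a subsequence of $\{\gamma_j\}$, so Step 3 is most cleanly justified by $\gamma_k\le\gamma_0$, which gives $1/\gamma_k\ge 1/\gamma_0$, rather than by \eqref{sequence_condition} directly.
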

\begin{proof}
The proof is analogous to the proof of  Lemma~\ref{lemma_finite_termination_G_K}.
\end{proof}
The Lemma~\ref{lemma_finite_termination_Arnoldi} establishes that the  Algorithm~\ref{algo:Arnoldi_noisy_data} is well-defined and terminates in finite number of iterations, yielding the stopping index $k_{\delta}$. As noted in Section~\ref{section_con_analysis_G_K}, $k_\delta$ remains a random variable owing to its dependence on the realization of the sample paths. We now establish the regularization properties of the iterates generated by Algorithm~\ref{algo:Arnoldi_noisy_data}.
 
\subsection{Regularization property of \texttt{RIAT}}
To establish the regularization property, we adopt the three-stage approach from Section~\ref{section_con_analysis_G_K}: establishing exact-data convergence, proving pathwise stability under data perturbations, and synthesizing the main regularization result.
We begin by defining the exact-data counterpart of Algorithm~\ref{algo:Arnoldi_noisy_data}.
\begin{breakablealgorithm}
\caption{\texttt{RIAT} with exact-data}
\label{algo:Arnoldi_exact_data_exact}
\begin{algorithmic}[1]
\State Input: $\mu_0 > 0$, $\mu_1 > 0$ and $\zeta_{n,0} \in \mathcal{U}_n$.
\State Compute $u_{n,0} = \arg\min_{u_n \in \mathcal{U}_n}
\bigl\{\mathfrak{f}_n(u_n) - \langle \zeta_{n,0}, u_n\rangle\bigr\}$ and set
\Statex \hspace{\algorithmicindent}
$\zeta_{n,0,0} = \zeta_{n,0}$,\quad
$u_{n,0,0} = u_{n,0}$.
\For{$k = 0, 1, \ldots,$} (\textbf{Outer loop})
\State Pick a fixed positive integer $m_k$.
\For{$m = 0, 1, \ldots, m_k - 1$} (\textbf{Inner loop})
\State Draw $i_{k,m} \in \{1,\ldots,q\}$ uniformly at random and compute
\begin{align*}
\tilde h_{k,m}
&=
\bigl(\tilde A_{i_{k,m}}\bigr)^{-1/2}
\tilde V_{l+1,i_{k,m}}^{*}
\bigl(
\tilde{\mathcal{T}}_{i_{k,m},n}^{\,l}u_{n,k,m}
- v_{i_{k,m},n}
\bigr),
\\
\tilde g_{k,m}
&=
\tilde V_{l,i_{k,m}}
H_{l+1,l,i_{k,m}}^{*}
\bigl(\tilde A_{i_{k,m}}\bigr)^{-1/2}
\tilde h_{k,m}.
\end{align*}
\State Set step-size
\[
\tilde t_{k,m} =
\begin{cases}
\min\!\left\{
\dfrac{\mu_0\,\|\tilde h_{k,m}\|^2}
{\|\tilde g_{k,m}\|^2},\;
\mu_1
\right\}
& \text{if } \tilde h_{k,m} \neq 0, \\[8pt]
0 & \text{otherwise.}
\end{cases}
\]
\State Set
$\zeta_{n,k,m+1}
=
\zeta_{n,k,m}
-
\tilde t_{k,m}\tilde g_{k,m}$,
and compute
\[
u_{n,k,m+1}
=
\arg\min_{u_n \in \mathcal{U}_n}
\bigl\{
\mathfrak{f}_n(u_n)
-
\langle \zeta_{n,k,m+1},u_n\rangle
\bigr\}.
\]
\EndFor

\State Outer update: Set
$\zeta_{n,k+1} = \zeta_{n,k,m_k}$ \text{ and }
$u_{n,k+1} = u_{n,k,m_k}$.
\Statex \hspace{\algorithmicindent} For each $i\in\{1,\ldots,q\}$, set $\tilde h_{k,i}
:= (\tilde A_{k,i})^{-1/2}\tilde V_{l+1,i}^{*}(\tilde{\mathcal T}_{i,n}^{l}u_{n,k}-v_{i,n}\bigr).$
\State Stopping check: \textbf{If}
\[
\bigl\|\tilde h_{k+1,i}\bigr\| = 0
\quad \forall\, i \in \{1,\ldots,q\},
\]
\Statex \hspace{\algorithmicindent} then output $u_{n,k+1}$ and stop,
\Statex \hspace{\algorithmicindent} \textbf{else} compute
\begin{align*}
\tilde\Delta_{k+1}
&=
\Bigl\{
i \in \{1,\ldots,q\} :
\bigl\|\tilde h_{k+1,i}\bigr\|^2 > 0
\Bigr\},
\\
\tilde g_{k+1}
&=
\sum_{i \in \tilde\Delta_{k+1}}
\tilde V_{l,i}
H_{l+1,l,i}^{*}
\bigl(\tilde A_{k+1,i}\bigr)^{-1/2}
\tilde h_{k+1,i},
\\
\tilde\Phi_{k+1}
&=
\sum_{i \in \tilde\Delta_{k+1}}
\bigl\|\tilde h_{k+1,i}\bigr\|^2.
\end{align*}

\State Outer step-size: Set
\vspace{-0.4mm}
\[
\tilde t_{k+1} =
\begin{cases}
\min\!\left\{
\dfrac{\mu_0\,\tilde\Phi_{k+1}}
{\|\tilde g_{k+1}\|^2},\;
\mu_1
\right\}
& \text{if } \tilde\Delta_{k+1} \neq \emptyset, \\[8pt]
0
& \text{if } \tilde\Delta_{k+1} = \emptyset.
\end{cases}
\]
\State Outer update: Set
$\zeta_{n,k+1,0}
=
\zeta_{n,k+1}
-
\tilde t_{k+1}\tilde g_{k+1}$,
and compute
\[
u_{n,k+1,0}
=
\arg\min_{u_n \in \mathcal{U}_n}
\bigl\{
\mathfrak{f}_n(u_n)
-
\langle \zeta_{n,k+1,0},u_n\rangle
\bigr\}.
\]
\EndFor
\end{algorithmic}
\end{breakablealgorithm}

Before establishing the  regularization property for \texttt{RIAT}, we need to consider a probability space on which the iterates are defined. We use the same product
probability space as in Section~\ref{section_con_analysis_G_K}. 
In particular the iterates $\zeta_{n,k,m+1}$ and $u_{n,k,m+1}$ generated by Algorithm~\ref{algo:Arnoldi_exact_data_exact} are defined on the finite product probability space $(\Pi_{k,m},\mathcal A_{k,m},\mathbb P_{k,m})$, and the corresponding
natural filtration is considered in the same manner as in
Section~\ref{section_con_analysis_G_K}. Consequently, all random variables arising in Algorithm~\ref{algo:Arnoldi_exact_data_exact} can be realized on the common probability space $(\Omega,\mathcal{F},\mathbb{P})$ and notion of
almost sure convergence is understood with
respect to this probability space.

\begin{lemma}\label{lemma_prob1_Arnoldi}
Consider Algorithm~\ref{algo:Arnoldi_exact_data_exact} and suppose the parameter $\mu_0>0$ is chosen so that \eqref{parameter_condition_2} holds.
Let $u_n^{*}\in\tilde{\mathcal S}_{n,l}$. Then, for all $k\geq0$ and $m=0,\ldots,m_k-1$,
\begin{equation}\label{mono_Arnoldi_algo_2}
\tilde\Gamma_{n,k+1}\leq\tilde\Gamma_{n,k,m+1}\leq \tilde\Gamma_{n,k,m}\leq\tilde\Gamma_{n,k},
\end{equation}
where $\tilde\Gamma_{n,k,m}:=D_{\mathfrak{f}_n}^{\zeta_{n,k,m}}(u_n^{*},u_{n,k,m}),\quad\tilde\Gamma_{n,k}:=D_{\mathfrak{f}_n}^{\zeta_{n,k}}(u_n^{*},u_{n,k}).$ Furthermore, the event
\begin{equation*}\label{Prob_one_event_Arnoldi}
\begin{aligned}
\tilde\Omega_0 := \Bigg\{ \sum_{k=0}^{\infty} \Bigg( & \sum_{m=0}^{m_k-1} \sum_{i=1}^{q}\|\tilde h_{k,m,i}\|^2  + \sum_{i=1}^{q} \|\tilde h_{k+1,i}\|^2 \Bigg) < \infty \Bigg\}
\end{aligned}
\end{equation*}
occurs with probability one, i.e. $\mathbb{P}(\tilde\Omega_0) = 1$, where 
\begin{equation*}\label{h_k_m_i_Arnoldi}
\tilde h_{k,m,i}:=(\tilde{A}_{i_{k,m}})^{-1/2}\tilde V_{l+1,i}^{*} (\tilde{\mathcal{T}}_{i,n}^l u_{n,k,m} - v_{i,n}).
\end{equation*} 
\end{lemma}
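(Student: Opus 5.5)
The plan is to transfer the proof of Lemma~\ref{lemma_prob1_G_K} to the Arnoldi setting. The only structural facts used there are the factorization $\tilde{\mathcal T}_{i,n}^{l}=\tilde V_{l+1,i}H_{l+1,l,i}\tilde V_{l,i}^{*}$, the orthonormality of the columns of $\tilde V_{l,i}$ and $\tilde V_{l+1,i}$ (so $\|\tilde V_{l,i}\|\le 1$ and $\|\tilde V_{l+1,i}^*\|\le1$), and the operator bounds \eqref{operator_bounds} applied with $A=H_{l+1,l,i}$. All of these remain valid after the substitution $(V,U,B)\to(\tilde V_{l},\tilde V_{l+1},H)$.

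\emph{Step 1: one-step descent.} For the exact-data iteration I will repeat the computation of Lemma~\ref{lemma_mon_Arnoldi} (equivalently Lemma~\ref{lemma_mon_G_k}) with $\delta_i=0$. Since $u_n^*\in\tilde{\mathcal S}_{n,l}$, we have $\tilde{\mathcal T}_{i,n}^lu_n^*=v_{i,n}$. The inner product term therefore equals $-\tilde t_{k,m}\|\tilde h_{k,m}\|^2$ exactly, with no noise term. Combining this with \eqref{taylor_first_order_approximation_fenchel_dual} and the step-size rule gives
\[
\tilde\Gamma_{n,k,m+1}-\tilde\Gamma_{n,k,m}\le-\mathfrak C_1\tilde t_{k,m}\|\tilde h_{k,m}\|^2
\quad\text{and}\quad
\tilde\Gamma_{n,k+1,0}-\tilde\Gamma_{n,k+1}\le-\mathfrak C_1\tilde t_{k+1}\tilde\Phi_{k+1}.
\]
Next, I will bound $\|\tilde g_{k,m}\|\le\|\tilde h_{k,m}\|$ and $\|\tilde g_{k+1}\|^2\le q\tilde\Phi_{k+1}$ by Cauchy--Schwarz, exactly as in Lemma~\ref{lemma_finite_termination_G_K}. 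This shows that every active step satisfies $\tilde t\ge\bar\mu=\min\{\mu_0/q,\mu_1\}$. Because $\tilde\Gamma_{n,k+1}=\tilde\Gamma_{n,k,m_k}$, chaining these inequalities yields the ordering \eqref{mono_Arnoldi_algo_2}.

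\emph{Step 2: expectation and summability.} I will use the filtration $\mathcal F_{k,m}$ generated by the indices drawn before step $(k,m)$. The iterate $u_{n,k,m}$ is $\mathcal F_{k,m}$-measurable and $i_{k,m}$ is uniform and independent of $\mathcal F_{k,m}$, so
\[
\mathbb E\bigl[\|\tilde h_{k,m}\|^2\mid\mathcal F_{k,m}\bigr]=\frac1q\sum_{i=1}^q\|\tilde h_{k,m,i}\|^2 .
\]
Here the matrix $\tilde A$ in $\tilde h_{k,m,i}$ is understood with $\gamma_{k,m}$ and $H_{l+1,l,i}$, matching the G--K definition. For the outer step, $\tilde\Phi_{k+1}=\sum_{i=1}^q\|\tilde h_{k+1,i}\|^2$, since the indices outside $\tilde\Delta_{k+1}$ contribute zero. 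This gives $-\tilde\Phi_{k+1}\le-\tilde\Phi_{k+1}/q$. Taking expectations, telescoping over the inner and outer steps, and using $\tilde\Gamma\ge0$ gives the bound
\[
\frac{\mathfrak C_1\bar\mu}{q}\sum_{k=0}^N\mathbb E\Bigl[\sum_{m}\sum_i\|\tilde h_{k,m,i}\|^2+\sum_i\|\tilde h_{k+1,i}\|^2\Bigr]\le\tilde\Gamma_{n,0,0}.
\]
The right-hand side is deterministic and finite, since $u_n^*\in\operatorname{dom}(\mathfrak f_n)$. The monotone convergence theorem then gives finite expectation of the full series, so the series is finite almost surely, which is $\mathbb P(\tilde\Omega_0)=1$.

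\emph{Main obstacle.} There is no genuinely new difficulty; the care is in bookkeeping. First, the random index inside $\tilde A_{i_{k,m}}$ in the definition of $\tilde h_{k,m,i}$ must be read as depending only on $\gamma_{k,m}$ and $i$. Otherwise the conditional expectation identity fails, so I will fix this reading at the start. Second, the Arnoldi $\tilde g_{k,m}$ in Algorithm~\ref{algo:Arnoldi_exact_data_exact} must equal $\tilde V_{l,i}H^*(\tilde A)^{-1/2}\tilde h$, so that the adjoint identity $\langle\tilde g_{k,m},u-u_n^*\rangle=\langle\tilde h_{k,m},\tilde A^{-1/2}\tilde V_{l+1}^*(\tilde{\mathcal T}^l u-v)\rangle$ holds. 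I will verify this identity explicitly using $\tilde V_{l+1,i}^*\tilde V_{l+1,i}=I$.
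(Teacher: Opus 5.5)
Your proposal is correct and takes essentially the same route as the paper. The paper's proof simply transfers the argument of Lemma~\ref{lemma_prob1_G_K} after substituting $\tilde{\mathcal T}_{i,n}^{l}=\tilde V_{l+1,i}H_{l+1,l,i}\tilde V_{l,i}^{*}$; this is exactly your sequence of exact-data descent with constant $\mathfrak C_1$, the step-size floor $\bar\mu$, the conditional expectation over the uniform index, telescoping, and monotone convergence. Reading $\tilde A_{i_{k,m}}$ in $\tilde h_{k,m,i}$ as $\gamma_{k,m}I+H_{l+1,l,i}H_{l+1,l,i}^{*}$ is the intended meaning, matching $h_{k,m,i}$ in the Golub--Kahan lemma, and it is what makes the conditional-expectation identity valid.
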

\begin{proof}
The proof is identical to the proof of Lemma~\ref{lemma_prob1_G_K}, after
replacing the Golub--Kahan identity by the Arnoldi identity $\tilde{\mathcal T}_{i,n}^{l}=\tilde V_{l+1,i}H_{l+1,l,i} \tilde V_{l,i}^{*}$.
\end{proof}

\begin{theorem}\label{Theorem_Arnoldi_exact}
Consider  Algorithm~\ref{algo:Arnoldi_exact_data_exact} and assume the parameter $\mu_0>0$ is chosen so that \eqref{parameter_condition_2} holds. Let there be a fixed positive integer $M$ with $m_k\leq M$ for all k. Then, the iterates  generated by Algorithm~\ref{algo:Arnoldi_exact_data_exact} converges almost surely to the $\mathfrak f_n$-minimizing solution $\tilde u_{n,l}^\dagger$ of the Arnoldi projected system \eqref{dis_Arnoldi_exact_data}, i.e. 
\begin{equation}\label{theorem_Arnoldi_statement}
\lim_{k\to\infty}D_{\mathfrak{f}_n}^{\zeta_n,k}(\tilde u_{n,l}^\dagger ,u_{n,k}) = 0 \;,\quad \lim_{k\to\infty}\|\tilde u_{n,l}^\dagger -u_{n,k}\|=0 \quad \text{almost surely.} 
\end{equation}
\end{theorem}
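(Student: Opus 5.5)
The plan is to mirror the proof of Theorem~\ref{Theorem_G_K_exact}. I work pathwise on the probability-one event $\tilde\Omega_0$ from Lemma~\ref{lemma_prob1_Arnoldi} and verify hypotheses (i)--(iv) of Proposition~\ref{proposition_G_K}, with $\mathcal T^l_{i,n}$ and $\mathcal S_{n,l}$ replaced by $\tilde{\mathcal T}^l_{i,n}$ and $\tilde{\mathcal S}_{n,l}$. The proposition is stated for an abstract family of linear operators, so it applies verbatim to the Arnoldi projected system \eqref{dis_Arnoldi_exact_data}.

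Fix a sample path in $\tilde\Omega_0$.

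\begin{itemize}
\item \textbf{Condition (i)} holds because $u_{n,k}=\nabla\mathfrak f_n^*(\zeta_{n,k})$.
\item \textbf{Condition (ii)} follows from the monotonicity \eqref{mono_Arnoldi_algo_2}, applied with an arbitrary $\hat u_n\in\tilde{\mathcal S}_{n,l}$.
\item \textbf{Condition (iii).} Set $\tilde\Psi_k:=\sum_{m=0}^{m_k-1}\sum_{i=1}^q\|\tilde h_{k,m,i}\|^2+\sum_{i=1}^q\|\tilde h_{k,i}\|^2$. Then $\sum_k\tilde\Psi_k<\infty$, so $\|\tilde h_{k,i}\|\to0$ for each $i$. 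The key structural fact is that $\tilde{\mathcal T}^l_{i,n}u-v_{i,n}\in\operatorname{Ran}(\tilde V_{l+1,i})$. This is immediate for $\tilde{\mathcal T}^l_{i,n}u=\tilde V_{l+1,i}H_{l+1,l,i}\tilde V_{l,i}^*u$. For $v_{i,n}$ it needs care, because the Krylov space is generated by $v^\delta_{i,n}$, not $v_{i,n}$. I would either argue that in the exact-data algorithm the decomposition is built from $v_{i,n}$ itself, or use that $v_{i,n}=\tilde{\mathcal T}^l_{i,n}\hat u_n$ for any $\hat u_n\in\tilde{\mathcal S}_{n,l}$, which is nonempty by assumption. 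With this fact, $\|\tilde{\mathcal T}^l_{i,n}u_{n,k}-v_{i,n}\|=\|\tilde V_{l+1,i}^*(\cdots)\|\le(\gamma_0+\|H_{l+1,l,i}H_{l+1,l,i}^*\|)^{1/2}\|\tilde h_{k,i}\|\to0$, using $\gamma_k\le\gamma_0$.
\item \textbf{Condition (iv).} Choose $k_z$ increasing with $\tilde\Psi_{k_w}\le\tilde\Psi_k$ for $k_z\le k<k_w$. Expand $\zeta_{n,k_w}-\zeta_{n,k_z}$ as the sum of the inner steps $-\tilde t_{k,m}\tilde g_{k,m}$ and the outer steps $-\tilde t_k\tilde g_k$. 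Each inner product $\langle\tilde g_{k,m},u_{n,k_w}-\hat u_n\rangle$ equals $\langle\tilde h_{k,m},\tilde A_{i_{k,m}}^{-1/2}\tilde V^*_{l+1,i_{k,m}}(\tilde{\mathcal T}^l_{i_{k,m},n}u_{n,k_w}-v_{i_{k,m},n})\rangle$, since $\tilde{\mathcal T}^l_{i,n}\hat u_n=v_{i,n}$ and $\tilde V_{l+1,i}^*\tilde V_{l+1,i}=I$. Monotonicity of $\gamma$ gives $\tilde A_{i_{k,m}}\ge\tilde A_{k_w,i_{k,m}}$, and hence each term is bounded by $(\sum_i\|\tilde h_{k,m,i}\|^2)^{1/2}\tilde\Psi_{k_w}^{1/2}$. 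Combining $t\le\mu_1$, $m_k\le M$ and Cauchy--Schwarz yields the bound $\mu_1\sqrt{M+1}\sum_{k=k_z}^{k_w-1}\tilde\Psi_k$, which tends to $0$ uniformly in $w$ as $z\to\infty$.
\end{itemize}

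Proposition~\ref{proposition_G_K} then gives $u_n^*\in\tilde{\mathcal S}_{n,l}$ with $D^{\zeta_{n,k}}_{\mathfrak f_n}(u_n^*,u_{n,k})\to0$. The norm convergence follows from \eqref{bregman_norm_relation}.

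To identify $u_n^*=\tilde u^\dagger_{n,l}$, I would check that every increment lies in $\operatorname{Ran}(\tilde{\mathcal T}^{l*}_{1,n})\oplus\cdots\oplus\operatorname{Ran}(\tilde{\mathcal T}^{l*}_{q,n})$. Indeed, $\tilde V_{l,i}H_{l+1,l,i}^*=\tilde{\mathcal T}^{l*}_{i,n}\tilde V_{l+1,i}$, because $\tilde{\mathcal T}^{l*}_{i,n}=\tilde V_{l,i}H^*_{l+1,l,i}\tilde V^*_{l+1,i}$ and $\tilde V^*_{l+1,i}\tilde V_{l+1,i}=I$. So $\tilde g_{k,m}=\tilde{\mathcal T}^{l*}_{i_{k,m},n}\tilde V_{l+1,i_{k,m}}(\cdots)$, and similarly for $\tilde g_k$. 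The final part of the proposition applies, and since $\mathbb P(\tilde\Omega_0)=1$ the conclusion holds almost surely.

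The main obstacle I expect is the range inclusion in condition (iii): $v_{i,n}$ is not automatically in $\operatorname{Ran}(\tilde V_{l+1,i})$, unlike the Golub--Kahan case, where $U_{l+1,i}$ plays that role. I would resolve it through the consistency assumption $\tilde{\mathcal S}_{n,l}\neq\emptyset$, which makes $v_{i,n}=\tilde{\mathcal T}^l_{i,n}\hat u_n\in\operatorname{Ran}(\tilde V_{l+1,i})$.
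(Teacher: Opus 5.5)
Your proposal is correct and follows the paper's route: the paper proves this theorem by rerunning the argument of Theorem~\ref{Theorem_G_K_exact} with the Arnoldi identity $\tilde{\mathcal T}^l_{i,n}=\tilde V_{l+1,i}H_{l+1,l,i}\tilde V_{l,i}^*$ and with Lemma~\ref{lemma_prob1_Arnoldi} in place of Lemma~\ref{lemma_prob1_G_K}, which is exactly what you do for conditions (i)--(iv) and for the range condition. One small correction to your closing remark: the range issue is not specific to Arnoldi, because $U_{l+1,i}$ is also generated from $v^\delta_{i,n}$, so the paper's Golub--Kahan proof silently relies on the same consistency fact $v_{i,n}=\mathcal T^l_{i,n}\hat u_n$ that you make explicit.
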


\begin{proof}
The proof is analogous to that of Theorem~\ref{Theorem_G_K_exact}. Specifically, we apply Proposition~\ref{proposition_G_K} again with the Golub--Kahan identity replaced by the Arnoldi identity $\tilde{\mathcal T}_{i,n}^{l} =\tilde  V_{l+1,i}H_{l+1,l,i}\tilde V_{l,i}^{*}$,
while Lemma~\ref{lemma_prob1_Arnoldi}  plays the role of Lemma~\ref{lemma_prob1_G_K}.
\end{proof}

\begin{lemma}\label{lemma_stability_Arnoldi}
Consider Algorithms~\ref{algo:Arnoldi_noisy_data} and~\ref{algo:Arnoldi_exact_data_exact} with the same parameters
$\tau>1,\mu_0>0, \mu_1>0 $, the same sequence $\{m_k\}$ and the same
realization of the random indices. Let $\hat{k}:=\lim\inf_{\delta\to0}k_\delta$ along any fixed sample path. Then, for every fixed integer \(k\) with \(0\le k\le \hat k\) and every
\(m=0,\ldots,m_k\), we have
\begin{equation}\label{stability_Arnoldi}
\|u^\delta_{n,k,m}-u_{n,k,m}\|\to0 \;\text{ and } \; \|\zeta^\delta_{n,k,m}-\zeta_{n,k,m}\|\to0\quad\text{as }\delta\to0 .
\end{equation}
When $\hat k=\infty$, we take $0\leq k< \hat{k}$.
\end{lemma}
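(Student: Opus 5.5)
We fix an arbitrary sample path $\{i_{k,m}: k\ge0,\ 0\le m<m_k\}$ and argue by induction on the lexicographically ordered pairs $(k,m)$, exactly as in the proof of Lemma~\ref{lemma_stability_G_K}. The Arnoldi quantities simply replace the Golub--Kahan ones: $\tilde V_{l,i},\tilde V_{l+1,i},H_{l+1,l,i},\tilde A$ take the roles of $V_{l,i},U_{l+1,i},B_{l+1,l,i},A$. The base case $(k,m)=(0,0)$ is trivial, since $\zeta^\delta_{n,0,0}=\zeta_{n,0,0}=\zeta_{n,0}$ and $u^\delta_{n,0,0}=u_{n,0,0}=u_{n,0}$. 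The induction step consists of two implications. (i) Convergence at an inner index $(k,m)$ with $k<\hat k$ and $m<m_k$ propagates to $(k,m+1)$. (ii) Convergence at the outer iterate $(\zeta_{n,k},u_{n,k})$ with $1\le k<\hat k$ propagates to $(\zeta_{n,k,0},u_{n,k,0})$. The identification $\zeta_{n,k+1}=\zeta_{n,k,m_k}$ links the two, and it also gives the endpoint $k=\hat k$, at which only the quantities $\zeta_{n,\hat k},u_{n,\hat k}$ reached by the inner loop are needed. Since $k<\hat k=\liminf_{\delta\to0}k_\delta$, we have $k<k_\delta$ for all sufficiently small $\delta$, so the noisy algorithm has not yet stopped and both algorithms perform the same type of update.

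For (i), the matrices $\tilde V_{l+1,i}^*$, $\tilde{\mathcal T}^l_{i,n}$ and $(\tilde A_{i_{k,m}})^{-1/2}$ are fixed and the data satisfy $v^\delta_{i,n}\to v_{i,n}$. The induction hypothesis therefore gives $\tilde h^\delta_{k,m}\to\tilde h_{k,m}$. In Algorithm~\ref{algo:Arnoldi_noisy_data}, $\tilde g^\delta_{k,m}$ is written with $(\tilde A)^{-1}\tilde V^*_{l+1}(\cdots)$, which equals $\tilde V_{l,i}H^*_{l+1,l,i}(\tilde A)^{-1/2}\tilde h^\delta_{k,m}$. Hence $\tilde g^\delta_{k,m}\to\tilde g_{k,m}$ as well, and the two algorithms are written consistently. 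We then show $\tilde t^\delta_{k,m}\tilde g^\delta_{k,m}\to\tilde t_{k,m}\tilde g_{k,m}$ by splitting into cases.
\begin{itemize}
\item If $\tilde h_{k,m}=0$, then $\tilde g_{k,m}=0$ and $\tilde t_{k,m}=0$, and the bound $\tilde t^\delta_{k,m}\le\mu_1$ gives $\|\tilde t^\delta_{k,m}\tilde g^\delta_{k,m}\|\le\mu_1\|\tilde g^\delta_{k,m}\|\to0$.
\item If $\tilde h_{k,m}\ne0$, then $\gamma_{k,m}\|\tilde h^\delta_{k,m}\|^2>\tau^2\delta_{i_{k,m}}^2$ for all small $\delta$, so the noisy step size is active. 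When $\tilde g_{k,m}\ne0$, continuity of $s\mapsto\min\{s,\mu_1\}$ gives $\tilde t^\delta_{k,m}\to\tilde t_{k,m}$. When $\tilde g_{k,m}=0$, the quotient blows up, so $\tilde t^\delta_{k,m}=\mu_1$ eventually and the product tends to $0=\tilde t_{k,m}\tilde g_{k,m}$.
\end{itemize}
The $\zeta$-update then gives $\zeta^\delta_{n,k,m+1}\to\zeta_{n,k,m+1}$. The $u$-iterates follow from $u=\nabla\mathfrak f_n^*(\zeta)$ and the $\frac1{2\nu}$-Lipschitz bound in \eqref{taylor_first_order_approximation_fenchel_dual}.

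For (ii), the same continuity gives $\tilde h^\delta_{k,i}\to\tilde h_{k,i}$ and $\tilde g^\delta_{k,i}\to\tilde g_{k,i}$ for every $i$. The delicate point is that the noisy active set $\tilde\Delta^\delta_k$ may differ from $\tilde\Delta_k$. If $i\in\tilde\Delta_k$, then $\|\tilde h_{k,i}\|>0$ and $\tau^2\delta_i^2\to0$, so $\tilde\Delta_k\subseteq\tilde\Delta^\delta_k$ eventually. If $i\notin\tilde\Delta_k$, then $\tilde h^\delta_{k,i}\to0$ and $\tilde g^\delta_{k,i}\to0$, so the indices in $\tilde\Delta_k^\delta\setminus\tilde\Delta_k$ contribute vanishing amounts to both $\tilde g^\delta_k$ and $\tilde\Phi^\delta_k$. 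This gives $\tilde g^\delta_k\to\tilde g_k$ and $\tilde\Phi^\delta_k\to\tilde\Phi_k$. We then repeat the same case analysis: $\tilde\Delta_k=\emptyset$; $\tilde\Delta_k\neq\emptyset$ with $\tilde g_k\ne0$; and $\tilde\Delta_k\neq\emptyset$ with $\tilde g_k=0$, where $\tilde\Phi_k>0$ forces $\tilde t_k^\delta=\mu_1$ eventually. This yields $\tilde t^\delta_k\tilde g^\delta_k\to\tilde t_k\tilde g_k$, and hence convergence of $\zeta^\delta_{n,k,0}$ and $u^\delta_{n,k,0}$.

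The main obstacle is this active-set mismatch together with the discontinuity of the step-size rule at $\tilde g=0$ or $\tilde h=0$; everything else is routine continuity. These are handled entirely by the case analysis above, which mirrors Lemma~\ref{lemma_stability_G_K} verbatim. No Arnoldi-specific property beyond $\|\tilde V_{l,i}\|\le1$ and \eqref{operator_bounds} is needed.
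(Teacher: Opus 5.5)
Your proposal is correct and follows the paper's route: the paper proves this lemma by reference to Lemma~\ref{lemma_stability_G_K}, which is the same induction over $(k,m)$ along a fixed sample path. That proof uses the same two implications, the same case split on $\tilde h_{k,m}=0$ versus $\tilde h_{k,m}\neq 0$ (with the subcase $\tilde g=0$), and the same handling of the active-set mismatch $\tilde\Delta_k\subseteq\tilde\Delta_k^\delta$ in the outer step. Your extra observations are accurate refinements, not departures: the form of $\tilde g^\delta_{k,m}$ in Algorithm~\ref{algo:Arnoldi_noisy_data} does equal $\tilde V_{l,i}H^*_{l+1,l,i}(\tilde A)^{-1/2}\tilde h^\delta_{k,m}$, and at the endpoint $k=\hat k$ only $\zeta_{n,\hat k}=\zeta_{n,\hat k-1,m_{\hat k-1}}$ is actually reached.
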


\begin{proof}
The proof is analogous to the proof of Lemma~\ref{lemma_stability_G_K}.
\end{proof}

\begin{theorem}\label{theorem_main_Arnoldi}
Consider the Algorithm~\ref{algo:Arnoldi_noisy_data} with a bounded sequence $\{m_k\}$ and suppose the parameters $\tau > 1$ and $\mu_0 > 0$ are chosen so that \eqref{parameter_condition} holds. Then, there exists a random integer $k_\delta$, finite along every sample path, such that Algorithm~\ref{algo:Arnoldi_noisy_data} terminates after $k_\delta$ iterations. In addition,
\begin{equation}\label{eq:conv_pathwise_Arnoldi}
\lim_{\delta \to 0} D_{\mathfrak{f}_n}^{\zeta_{n,k_\delta}^\delta}(\tilde u_{n,l}^\dagger, u_{n,k_\delta}^\delta) = 0
\; , \quad
\lim_{\delta \to 0} \|u_{n,k_\delta}^\delta - \tilde u_{n,l}^\dagger\| = 0 \quad\text{almost surely},
\end{equation}
and
\begin{equation}\label{eq:conv_expectation_Arnoldi}
\lim_{\delta \to 0} \mathbb{E}\bigg[D_{\mathfrak{f}_n}^{\zeta_{n,k_\delta}^\delta}(\tilde u_{n,l}^\dagger, u_{n,k_\delta}^\delta)\bigg] = 0 \;,\quad
\lim_{\delta \to 0} \mathbb{E}\big[\|u_{n,k_\delta}^\delta - \tilde u_{n,l}^\dagger\|^2\big] = 0.
\end{equation}
\end{theorem}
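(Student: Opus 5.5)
The argument transfers the proof of Theorem~\ref{theorem_main_G_K} to the Arnoldi setting, using the Arnoldi results already stated in this section in place of their Golub--Kahan counterparts. Finite termination along every sample path is exactly Lemma~\ref{lemma_finite_termination_Arnoldi}, so $k_\delta$ is a well-defined random integer. For convergence, I would compare Algorithm~\ref{algo:Arnoldi_noisy_data} with its exact-data counterpart Algorithm~\ref{algo:Arnoldi_exact_data_exact}, run with the same parameters, the same $\{m_k\}$ and the same realization of indices. I would then restrict to the probability-one event $\tilde\Omega_0$ from Lemma~\ref{lemma_prob1_Arnoldi}. On $\tilde\Omega_0$, Theorem~\ref{Theorem_Arnoldi_exact} gives $D_{\mathfrak f_n}^{\zeta_{n,k}}(\tilde u_{n,l}^\dagger,u_{n,k})\to0$. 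Set $\tilde\Gamma^\delta_{n,k}:=D_{\mathfrak f_n}^{\zeta^\delta_{n,k}}(\tilde u_{n,l}^\dagger,u^\delta_{n,k})$, fix a path in $\tilde\Omega_0$, and let $\hat k=\liminf_{\delta\to0}k_\delta$.

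\emph{Case $\hat k<\infty$.} Choose a sequence $\delta^j\to0$ with $k_{\delta^j}=\hat k$ for all large $j$. By Lemma~\ref{lemma_stability_Arnoldi}, $u^{\delta^j}_{n,\hat k}\to u_{n,\hat k}$ and $\zeta^{\delta^j}_{n,\hat k}\to\zeta_{n,\hat k}$. Passing to the limit in the stopping rule \eqref{DP_Arnoldi} yields $\tilde A_{\hat k,i}^{-1/2}\tilde V_{l+1,i}^*(\tilde{\mathcal T}^l_{i,n}u_{n,\hat k}-v_{i,n})=0$ for every $i$.

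The step requiring genuine care compared with the Golub--Kahan case is deducing that the residual $\tilde{\mathcal T}^l_{i,n}u_{n,\hat k}-v_{i,n}$ itself vanishes. Since $\tilde A_{\hat k,i}^{-1/2}$ is invertible, it suffices that the residual lies in $\operatorname{Ran}(\tilde V_{l+1,i})$. The term $\tilde{\mathcal T}^l_{i,n}u=\tilde V_{l+1,i}H_{l+1,l,i}\tilde V_{l,i}^*u$ is clearly in this range. For $v_{i,n}$, I would use that $\tilde u_{n,l}^\dagger\in\tilde{\mathcal S}_{n,l}$ gives $v_{i,n}=\tilde{\mathcal T}^l_{i,n}\tilde u_{n,l}^\dagger$, which is also in the range; this mirrors the implicit argument in Theorem~\ref{theorem_main_G_K}. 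Hence every residual is zero, the exact iteration is stationary from $\hat k$ onward, and therefore $u_{n,\hat k}=\tilde u_{n,l}^\dagger$. Monotonicity from Lemma~\ref{lemma_mon_Arnoldi} gives $\tilde\Gamma^\delta_{n,k_\delta}\le\tilde\Gamma^\delta_{n,\hat k}$. Writing the Bregman distance explicitly, the continuity of $\mathfrak f_n$ along the converging iterates (via $\mathfrak f_n(u)=\langle\zeta,u\rangle-\mathfrak f_n^*(\zeta)$ with $\mathfrak f_n^*$ continuous) then gives $\limsup\tilde\Gamma^\delta_{n,k_\delta}\le D_{\mathfrak f_n}^{\zeta_{n,\hat k}}(\tilde u^\dagger_{n,l},u_{n,\hat k})=0$.

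\emph{Case $\hat k=\infty$.} For each fixed $k$, we have $k<k_\delta$ for small $\delta$. Lemma~\ref{lemma_mon_Arnoldi} gives $\tilde\Gamma^\delta_{n,k_\delta}\le\tilde\Gamma^\delta_{n,k}$, and stability gives $\limsup_{\delta\to0}\tilde\Gamma^\delta_{n,k_\delta}\le D_{\mathfrak f_n}^{\zeta_{n,k}}(\tilde u^\dagger_{n,l},u_{n,k})$. Letting $k\to\infty$ and invoking Theorem~\ref{Theorem_Arnoldi_exact} makes the right-hand side zero.

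Combining both cases yields \eqref{eq:conv_pathwise_Arnoldi} on $\tilde\Omega_0$, with norm convergence following from \eqref{bregman_norm_relation}. For \eqref{eq:conv_expectation_Arnoldi}, monotonicity gives the deterministic bound $\tilde\Gamma^\delta_{n,k_\delta}\le D_{\mathfrak f_n}^{\zeta_{n,0}}(\tilde u^\dagger_{n,l},u_{n,0})$. Dominated convergence then gives convergence of the expected Bregman distance, and $\nu\,\mathbb E\|u^\delta_{n,k_\delta}-\tilde u^\dagger_{n,l}\|^2\le\mathbb E[\tilde\Gamma^\delta_{n,k_\delta}]$ finishes the proof.
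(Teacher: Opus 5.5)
Your proposal is correct and follows the paper's proof, which simply transfers the argument of Theorem~\ref{theorem_main_G_K}: restriction to the probability-one event, the two cases $\hat k<\infty$ and $\hat k=\infty$ handled via Lemmas~\ref{lemma_stability_Arnoldi} and~\ref{lemma_mon_Arnoldi} and Theorem~\ref{Theorem_Arnoldi_exact}, and then dominated convergence using the bound $\tilde\Gamma^\delta_{n,k_\delta}\le D_{\mathfrak f_n}^{\zeta_{n,0}}(\tilde u^\dagger_{n,l},u_{n,0})$. Your justification that $v_{i,n}=\tilde{\mathcal T}^l_{i,n}\tilde u^\dagger_{n,l}\in\operatorname{Ran}(\tilde V_{l+1,i})$ fills in a step the paper leaves implicit, and using continuity of $\mathfrak f_n^*$ in place of lower semicontinuity of $\mathfrak f_n$ is an equivalent way to pass to the limit.
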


\begin{proof}
The proof follows the same reasoning as in Theorem~\ref{theorem_main_G_K}, with 
Lemma~\ref{lemma_mon_Arnoldi}, Lemma~\ref{lemma_stability_Arnoldi} and Theorem~\ref{Theorem_Arnoldi_exact} playing the roles of Lemma~\ref{lemma_mon_G_k}, Lemma~\ref{lemma_stability_G_K} and Theorem~\ref{Theorem_G_K_exact} respectively.
\end{proof}

\section{Numerical simulations and discussions}\label{section_numerics}
This section presents numerical experiments on real-world imaging applications to evaluate the performance and effectiveness of the proposed methods. In particular, we conduct numerical simulations on two-dimensional Computed Tomography (CT), where the discretized forward operators are inherently rectangular and thus directly suited for the \texttt{RIGKT} method. Additionally, we consider an image deblurring problem characterized by a square and symmetric forward operator, which enables a direct performance comparison between \texttt{RIGKT} and \texttt{RIAT} under identical experimental conditions. We will strictly follow the notation of our paper and evaluate the reconstruction quality using Relative Error (RE),  \emph{Peak Signal-to-Noise Ratio (PSNR)}, and \emph{Structural Similarity Index (SSIM)} \cite{wang2004image}. For a reconstructed image $u_{n,k_\delta}^{\delta}$ and true image $u_n^\dagger$,
both normalized to the intensity range $[0,1]$,
\[\text{RE} := \frac{\| u_{n,k_{\delta}}^\delta  - u_n^{\dagger} \|}{\| u_n^{\dagger} \|}\; \text{ and } \quad\text{PSNR} := 20 \log_{10} \left( \frac{1}{\| u_n^{\dagger} - u_{n,k_{\delta}}^\delta \|} \right).\]
In both experiments, we consider the following strongly convex total-variation functional
\begin{equation}\label{strongly_convex_functional}
\mathfrak f_n(u_n) = \tfrac12\|u_n\|^2 + \lambda\,\mathrm{TV}(u_n),
\end{equation}
where $\lambda$ is a parameter that controls the strength of total-variation smoothing. For this functional, the primal step of Algorithms~\ref{algo:G_K_noisy_data} and~\ref{algo:Arnoldi_noisy_data} satisfies
\[
\arg\min_{u_n}\left\{f_n(u_n)-\langle\zeta_n,u_n\rangle
\right\}=\arg\min_{u_n}\left\{\frac12\|u_n-\zeta_n\|^2+\lambda\operatorname{TV}(u_n)\right\}=\operatorname{prox}_{\lambda\operatorname{TV}}(\zeta_n). 
\]     
Therefore, we numerically approximate this step by running Chambolle's dual projection method~\cite{chambolle2004algorithm} for a chosen number of iterations. 

\subsection{X-ray CT}\label{subsection_X-ray_CT}
It is a vital medical imaging technique utilized in the diagnosis of various internal conditions, such as tumors, bone fractures, internal trauma, and hemorrhages.  In a typical CT system, an X-ray source rotates around the subject along a circular or arc-shaped trajectory and emits radiation from a finite set of projection angles. As the X-ray beams traverse the body, they are attenuated to different extents depending on the density and composition of the tissues encountered. The transmitted radiation is subsequently measured by an array of detectors, producing projection data from which cross-sectional images of the internal anatomy are reconstructed. See \cite{natterer2001mathematics} for further details.

\subsubsection{Experimental setup.} We use the Shepp--Logan phantom ($u_n^\dagger$) from \texttt{skimage.data} as our test image discretized on a $128 \times 128$ Cartesian grid ($n_2 = 128^2 = 16384$) and scaled to $[0,1]$. The forward mapping is modeled via a sparse discrete parallel-beam Radon transform featuring $\theta=60$ projection angles uniformly distributed over the interval $[0,\pi)$. For each angular view, the image is rotated using bilinear interpolation on a zero-padded grid and integrated along one coordinate axis. The detector array comprises $P = \lceil\sqrt{2}\times128\rceil+1 = 183$ bins yielding a full system matrix $\mathcal{T}_n$ of dimension $m \times n_2 = 10980 \times 16384$, where $m = P\times \theta$. To facilitate block-iterative reconstruction using Algorithm~\ref{algo:G_K_noisy_data}, the $60$ projection views are partitioned into $q = 30$ consecutive blocks, with each block containing two views. The $i$-th block operator $\mathcal{T}_{i,n}$ is defined as
\[
\mathcal{T}_{i,n}=\begin{bmatrix}\mathcal{T}_{\theta_{2i-1},n}\\[1mm]\mathcal{T}_{\theta_{2i},n}\end{bmatrix}\in\mathbb{R}^{366\times16384},\quad i=1,\ldots,30.
\]
The corresponding exact block data are $v_{i,n}=\mathcal{T}_{i,n}u_n^\dagger$. This construction produces the family of rectangular linear systems to which the Golub--Kahan formulation of Section~\ref{section_con_analysis_G_K} can be  applied. For a prescribed relative noise level,
\[\delta_{rel} = \frac{\|v_{i,n}^{\delta_i}-v_{i,n}\|}{\|v_{i,n}\|},\]
the noisy data within each block are generated independently as
\[v_{i,n}^{\delta_i} = v_{i,n} + e_i, \quad e_i \sim \mathcal{N}(0, \rho_i^2 I_{366}), \quad \rho_i = \delta_{rel} \frac{\|v_{i,n}\|}{\sqrt{366}},
\]
where $I_{366}$ is the identity matrix and the noise bound supplied to Algorithm~\ref{algo:G_K_noisy_data} is the realized value
$\delta_i=\|e_i\|$, so that $\|v_{i,n}^{\delta_i}-v_{i,n}\|\le\delta_i$ by
construction.

\begin{figure}[H]
\centering
\includegraphics[
width=10cm,height=6cm,keepaspectratio
]{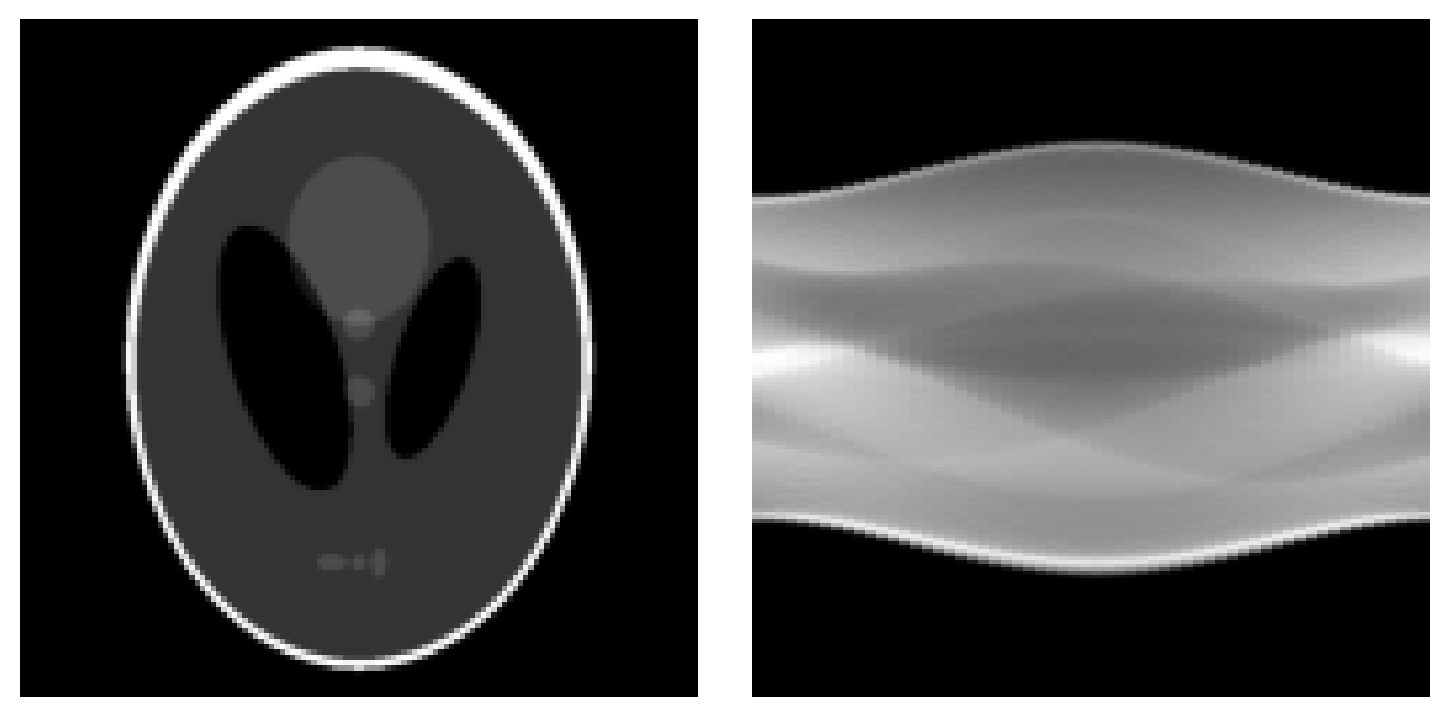}
\caption{True phantom (left) and noisy  sinogram (right) with $\delta_{rel}=0.05$.}
\label{Figure_True_phantom_and_noisy_sinogram}
\end{figure}

For each block we perform $l=80$ steps of Golub--Kahan bidiagonalization
initialized with $v_{i,n}^{\delta_i}$, giving $\mathcal T_{i,n}^l =
U_{l+1,i}B_{l+1,l,i}V_{l,i}^*$ with $B_{l+1,l,i}\in\mathbb R^{81\times80}$.
Since $l\ll\min\{366,16384\}$, the eigen decomposition of
$B_{l+1,l,i}B_{l+1,l,i}^*$ and hence the matrix inverses required in
Algorithm~\ref{algo:G_K_noisy_data} are computed inexpensively at every inner step. We use the strongly convex functional~\eqref{strongly_convex_functional} with $\lambda=0.2$. The primal minimization step in Algorithm~\ref{algo:G_K_noisy_data} is reduced to the proximal TV
problem $\mathrm{prox}_{\lambda\,\mathrm{TV}}(\zeta_n)$, which we solve
approximately using $18$ iterations of the Chambolle's dual projection method \cite{chambolle2004algorithm}. We consider the sequence 
\[\gamma_k = \max\{\gamma_0(0.98)^k, 10^{-4}\}\quad \text{with} \quad
\gamma_0 = \max_{1\le i\le q}\lambda_{\max}(B_{l+1,l,i}B_{l+1,l,i}^*).\]
The value of $\gamma_k$ is updated once per outer iteration and remains fixed during all subsequent inner updates. For the inner iterations we perform $m_k = 12$ randomized updates at each outer index. Finally, the remaining algorithm parameters are set to $\tau=1.15$, $\mu_0=0.1$, and $\mu_1=1.5$, satisfying \eqref{parameter_condition}.

\subsubsection{Reconstruction results.}
The true phantom along with the noisy sinogram is presented in Figure~\ref{Figure_True_phantom_and_noisy_sinogram} and the performance of the proposed \texttt{RIGKT} method across varying noise levels is illustrated in Figure~\ref{Figure_CT_reconstructions}, while Figure~\ref{Figure_CT_relative_error} depicts the relative error versus the iteration count. Additionally, the quantitative metrics for the reconstructed images obtained via Algorithm~\ref{algo:G_K_noisy_data} are summarized in Table~\ref{table:CT-results}.
As expected from the discrepancy principle \eqref{DP_G_K}, the method terminates across all four noise levels. The quantitative and qualitative results indicate that the core structural features and sharp edges of the phantom are well recovered under low-to-moderate noise levels. 
\begin{figure}[H]
\centering
\includegraphics[
width=17cm,height=6cm,keepaspectratio
]{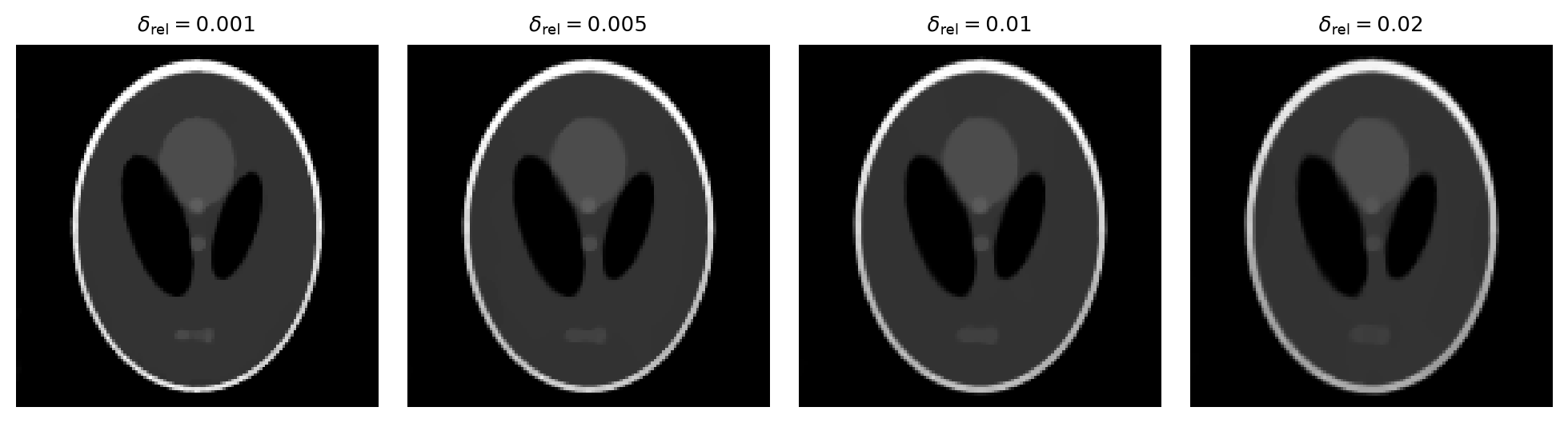}
\caption{Reconstruction results using \texttt{RIGKT} for different noise levels.}
\label{Figure_CT_reconstructions}
\end{figure}

\begin{figure}[H]
\centering
\includegraphics[
width=10cm,height=6cm,keepaspectratio
]{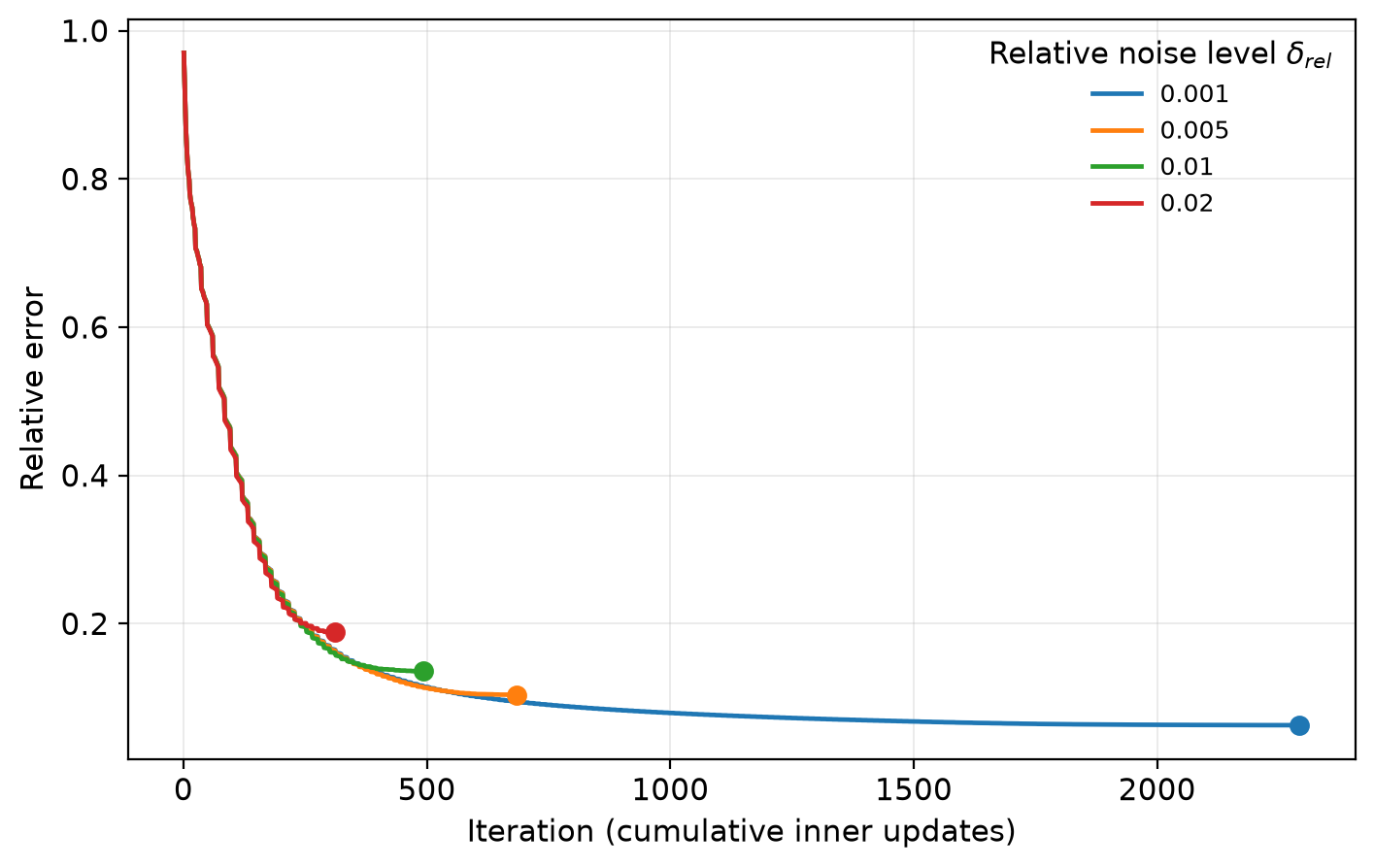}
\caption{Relative error versus iteration for different noise levels.}
\label{Figure_CT_relative_error}
\end{figure}

\begin{table}[H]
\centering
\caption{Reconstruction results for \texttt{RIGKT}.}
\label{table:CT-results}
\begin{tabular}{c@{\quad}c@{\quad}c@{\quad}c@{\quad}c@{\quad}c}
\hline
$\delta_{\rm rel}$ & RE & PSNR (dB) & SSIM & Inner updates & Outer index\\
\hline
$0.001$ & $0.062$ & $36.85$ & $0.995$ & $2292$ & $190$\\
$0.005$ & $0.104$ & $32.35$ & $0.989$ & $684$ & $56$\\
$0.01$ & $0.136$ & $30.01$ & $0.981$ & $492$ & $40$\\
$0.02$ & $0.188$ & $27.17$ & $0.961$ & $312$ & $25$\\
\hline
\end{tabular}
\end{table}

\subsection{Image deblurring}
A typical image deblurring problem can be modeled by a discrete linear ill-posed problem
\[
\mathcal{T}_n u_n^\dagger = v_n,
\]
where $v_n \in \mathbb{R}^{n_2}$ is the blurred image and
$\mathcal{T}_n \in \mathbb{R}^{n_2\times n_2}$ is a forward operator that
models the blurring of a true image $u_n^\dagger \in \mathbb{R}^{n_2}$ represented by $n\times n$ pixels ($n_2 = n^2$). Unlike the X-ray CT example in above subsection where the $q$ block operators $\mathcal{T}_{i,n}$ correspond to 
different rectangular sub-collections of projection angles, here we use a single
common operator with $q$ independent noisy realizations of the same
measurement. This allows us to apply both the Algorithms~\ref{algo:G_K_noisy_data} and \ref{algo:Arnoldi_noisy_data} to the same discretized forward operator and data.
\subsubsection{Experimental setup.} We use the $256\times256$ grayscale \texttt{cameraman} test image from \texttt{skimage.data} ($n=256$, $n_2=65536$), rescaled to the intensity interval $[0,1]$. The forward operator $\mathcal{T}_n$ is a separable
Gaussian blur with reflective boundary conditions constructed as
the Kronecker product $\mathcal{T}_n = D_1 \otimes D_1$ of a  one-dimensional Gaussian blur matrix $D_1 \in \mathbb{R}^{256\times256}$ with standard deviation $\rho_{\mathrm{blur}}=1$.
We take $q=16$ identical block operators $\mathcal T_{i,n}:=\mathcal T_n$, $i=1,\dots,q$, and generate $q$ statistically independent noisy measurements of the same blurred image as
\[
v_{i,n}^{\delta_i} = \mathcal{T}_n u_n^{\dagger} + e_i, \quad
e_i \sim \mathcal{N}(0,\rho^2 I_{n_2}), \quad \rho = \delta_{\mathrm{rel}}\,\frac{\|T_n u_n^{\dagger}\|}{\sqrt{n_2}},
\]
where $\delta_{\mathrm{rel}}$ is the prescribed relative noise level. As in Section~\ref{subsection_X-ray_CT}, the noise bound supplied to the algorithm is 
$\delta_i = \|e_i\|$, so that $\|v_{i,n}^{\delta_i}-v_{i,n}\| \le \delta_i$.
Figure~\ref{Figure_True_image_cameraman_and noisy_observations} shows the true image together with noisy blurred images corresponding to  different noise realizations.

\begin{figure}[H]
\centering
\includegraphics[
width=4.2cm,height=6cm,keepaspectratio
]{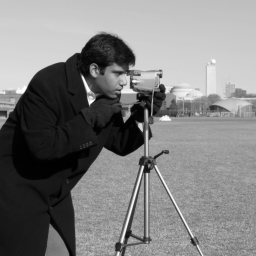}
\includegraphics[
width=17cm,height=6cm,keepaspectratio
]{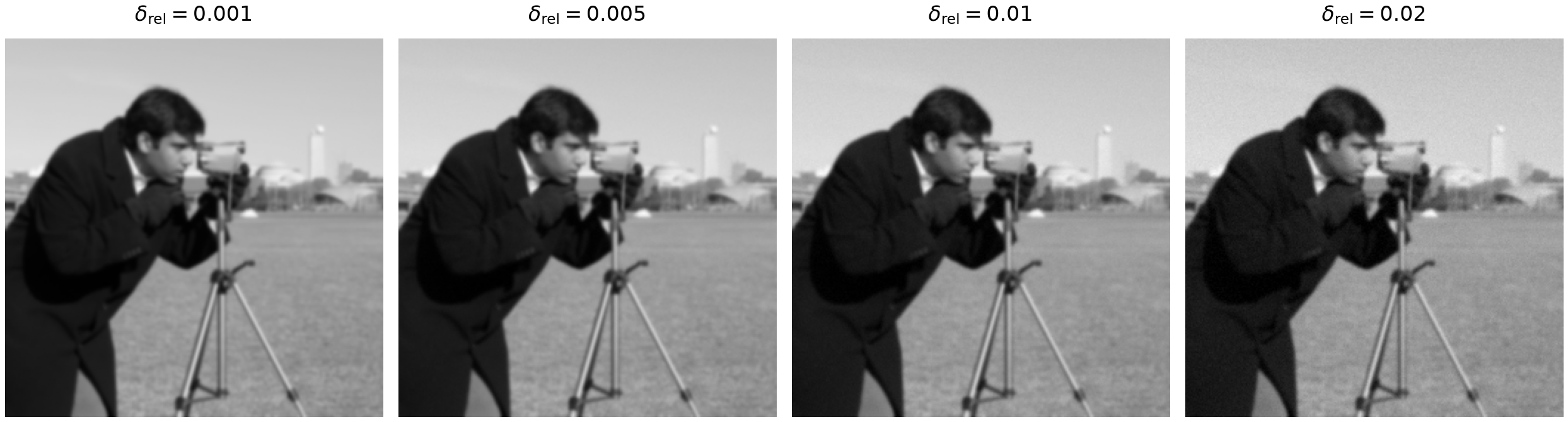}
\caption{True image (top) along with blurred images corresponding to four different noisy realizations (bottom).}
\label{Figure_True_image_cameraman_and noisy_observations}
\end{figure}

For each $i$ we perform $l=200$ steps of both the Golub--Kahan bidiagonalization and the Arnoldi process, initialized with
$v_{i,n}^{\delta_i}$, giving the low-rank approximations
\[\mathcal T_{i,n}^l = U_{l+1,i}B_{l+1,l,i}V_{l,i}^* \quad \text{and }
\tilde{\mathcal T}_{i,n}^l = \tilde V_{l+1,i}H_{l+1,l,i}\tilde
V_{l,i}^*, \quad \text{with } B_{l+1,l,i},H_{l+1,l,i}\in\mathbb R^{201\times200}.\]
Since
$l\ll n_2=65536$, the corresponding $201\times201$ matrix inverses required
in Algorithms~\ref{algo:G_K_noisy_data} and~\ref{algo:Arnoldi_noisy_data} are inexpensive at every inner step. We use the same strongly convex functional \eqref{strongly_convex_functional} with $\lambda=0.005$. The primal minimization step in Algorithms~\ref{algo:G_K_noisy_data} and~\ref{algo:Arnoldi_noisy_data} again reduces to $\mathrm{prox}_{\lambda\,\mathrm{TV}}(\zeta_n)$, which we solve
approximately using $18$ iterations of the Chambolle's dual projection method \cite{chambolle2004algorithm}. The sequence $\{\gamma_k\}$ is chosen as 
\[\gamma_k = \max\{\gamma_0(0.96)^{k},\,10^{-4}\}, \]
where  $\gamma_0 = \max_{1\le i\le q}\lambda_{\max}(B_{l+1,l,i}B_{l+1,l,i}^{*})$ 
for \texttt{RIGKT} and
$\gamma_0 = \max_{1\le i\le q}\lambda_{\max}(H_{l+1,l,i}H_{l+1,l,i}^{*})$ for
\texttt{RIAT}, respectively. We fix $m_k = 8$ and update $\gamma_k$ once per outer iteration while fixing it during all subsequent inner updates similarly as in Section~\ref{subsection_X-ray_CT}. The remaining algorithms parameters are set to $\tau=1.15$, $\mu_0=0.1$, and $\mu_1=1.5$ that satisfy \eqref{parameter_condition}.\\

\subsubsection{Reconstruction results.} We evaluate the reconstruction quality
using the same three measures as in Section~\ref{subsection_X-ray_CT}, (RE), (PSNR), and (SSIM). Figure~\ref{Figure_deblurring_reconstructions_G_k_and_Arnoldi} illustrates the performance of the proposed methods \texttt{RIGKT} and \texttt{RIAT} across varying noise levels, while Figure~\ref{Figure_deblurring_relative_error_RIGK_RIAT} depicts the relative error versus the iteration count. Additionally,  Table~\ref{table:deblurring_results} summarizes the quantitative metrics for the reconstructed images obtained by Algorithms~\ref{algo:G_K_noisy_data} and ~\ref{algo:Arnoldi_noisy_data}. As expected from the discrepancy principles \eqref{DP_G_K} and \eqref{DP_Arnoldi}, both Algorithms~\ref{algo:G_K_noisy_data} and~\ref{algo:Arnoldi_noisy_data} terminate across all four noise levels. The qualitative and quantitative results indicate that the structural features of the true image are well recovered throughout the tested noise range. It should be noted that, because both the schemes operate on the same forward operator $\mathcal{T}_n$, identical noisy data, and the same functional and parameters, they produce nearly identical results, see Figures~\ref{Figure_deblurring_reconstructions_G_k_and_Arnoldi}, \ref{Figure_deblurring_relative_error_RIGK_RIAT} and Table~\ref{table:deblurring_results}. This demonstrates that for a symmetric forward operator, the Golub–Kahan and Arnoldi approximations of $\mathcal{T}_n$ capture comparable information for regularization purposes. Nevertheless, \texttt{RIAT} provides a computational advantage, achieving the same accuracy via a Hessenberg projection that is cheaper to construct than the corresponding bidiagonal one.

\begin{figure}[htbp]
\centering
\includegraphics[
width=17cm,height=6cm,keepaspectratio
]{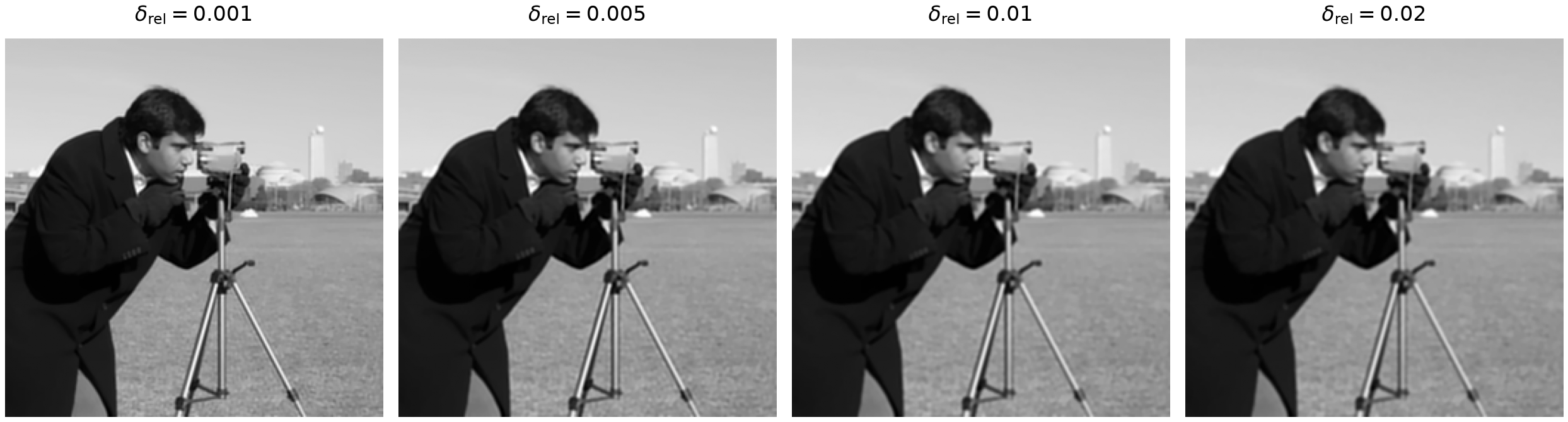}
\includegraphics[
width=17cm,height=6cm,keepaspectratio
]{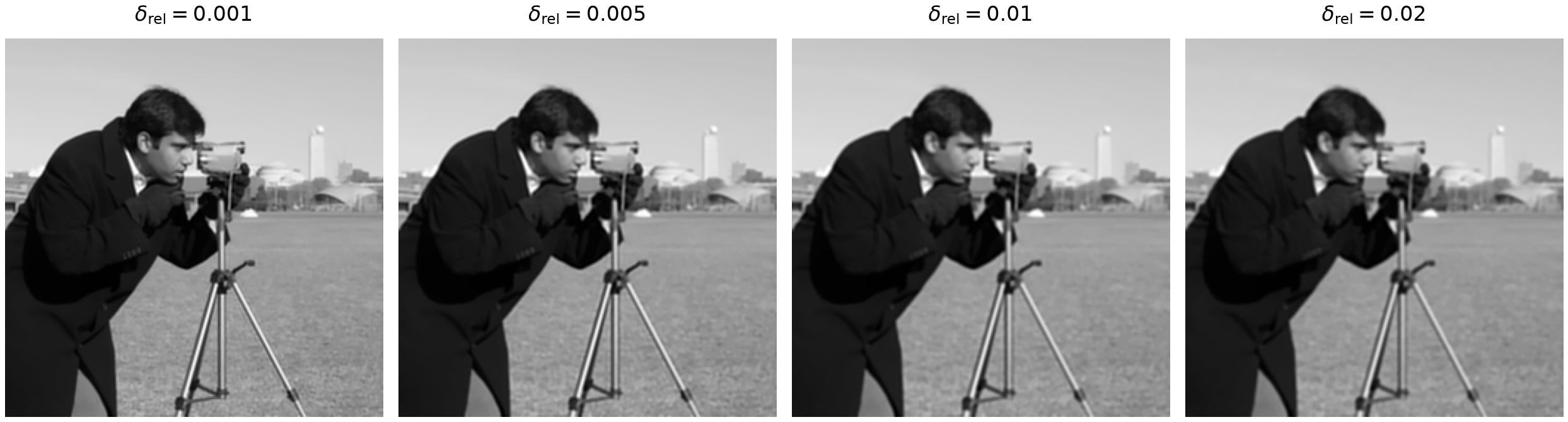}\caption{Reconstructed images using \texttt{RIGKT} (top) and \texttt{RIAT} (bottom).}
\label{Figure_deblurring_reconstructions_G_k_and_Arnoldi}
\end{figure}

\begin{figure}[htbp]
\centering
\includegraphics[
width=17cm,height=6cm,keepaspectratio
]{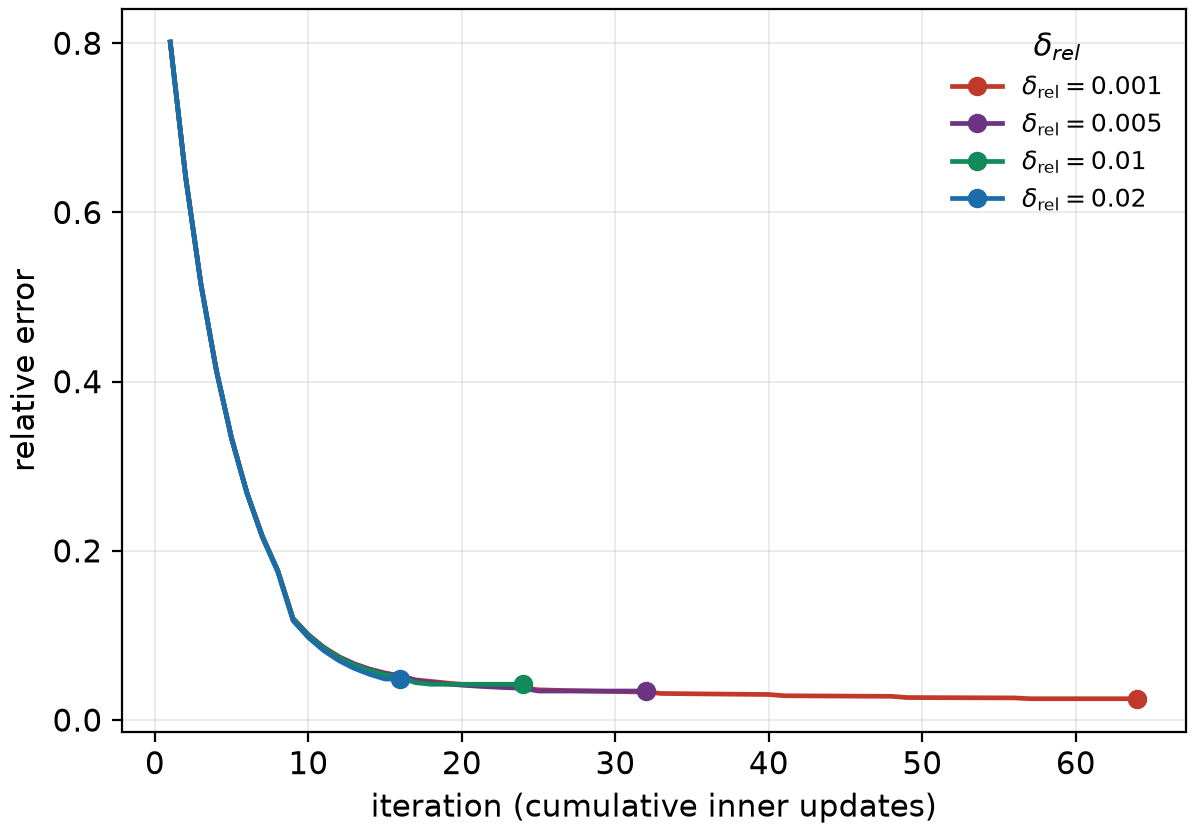}
\includegraphics[
width=17cm,height=6cm,keepaspectratio
]{deblurring_rigkt_relative_error.png}
\caption{Relative error versus iteration for \texttt{RIGKT} (left) and \texttt{RIAT} (right) across varying noise levels.}
\label{Figure_deblurring_relative_error_RIGK_RIAT}
\end{figure}

\begin{table}[htbp]
\centering
\caption{Reconstruction results for \texttt{RIGKT} and \texttt{RIAT}.}
\label{table:deblurring_results}
\begin{tabular}{c@{\quad}c@{\quad}c@{\quad}c@{\quad}c@{\quad}c@{\quad}c}
\hline
Algorithm & $\delta_{\mathrm{rel}}$ & RE & PSNR (dB) & SSIM & Inner updates & Outer index \\
\hline
1 & 0.001 & 0.0252 & 36.73 & 0.962 & 64 & 8 \\
1 & 0.005 & 0.0342 & 34.06 & 0.941 & 32 & 4 \\
1 & 0.01 & 0.0423 & 32.22 & 0.921 & 24 & 3 \\
1 & 0.02 & 0.0485 & 31.04 & 0.908 & 16 & 2 \\
\hline
3 & 0.001 & 0.0252 & 36.71 & 0.962 & 64 & 8 \\
3 & 0.005 & 0.0342 & 34.07 & 0.941 & 32 & 4 \\
3 & 0.01 & 0.0423 & 32.22 & 0.921 & 24 & 3\\
3 & 0.02 & 0.0484 & 31.05 & 0.908 & 16 & 2 \\
\hline
\end{tabular}
\end{table}

\section{Conclusion}\label{conclusion}
In this work we proposed two randomized iterative regularization methods namely, \texttt{RIGKT} and \texttt{RIAT} for large-scale linear inverse problems that integrate randomized iterated Tikhonov regularization with Krylov projections while incorporating strongly convex regularization priors. Theoretically, we established adaptive step-size strategies coupled with discrepancy-based \emph{a posteriori} stopping criterion for both the methods and proved their regularization properties. Numerical simulations on X-ray CT and image deblurring validated the theoretical results, demonstrating that the \emph{a posteriori} stopping criterion consistently recovers the structural features of the exact solution. Notably, the theoretical analysis is particularly challenging and distinct from existing approaches in this framework due to the interplay between the inherent randomness in the iterates and stopping indices, coupled with Krylov subspace projections for dimension reduction.

Several open directions remain for future work. First, establishing explicit convergence rates under the proposed \emph{a posteriori} stopping criterion is of interest, since the randomness of the resulting stopping index makes such an analysis considerably more challenging. Second, the Krylov projection techniques used here could be extended to the broader class of iterative graph Laplacian regularization methods~\cite{Bajpai2026Graph,Bajpai2026Convergence,Bianchi2025DataDependent}, so as to reduce the computational cost of large-scale reconstructions. Third, generalizing the present framework from Hilbert to Banach spaces would broaden its scope to regularization functionals such as total variation and $\ell^1$-type penalties. Fourth, the Gaussian noise model with a known bound assumed for the  discrepancy principle could be relaxed to the distribution-free white-noise setting developed in~\cite{Jahn2023Noise,Harrach2023Regularizing}, building on the Bakushinskii veto~\cite{Harrach2020Beyond} and its circumvention via repeated measurements.

\vspace{1cm}
\textbf{Data Availability.} The data and source code underlying the results of this study can be obtained from the authors  upon reasonable request.

\section*{Acknowledgment}
The authors acknowledge the use of the coding assistant Claude for code debugging and verification.

\bibliographystyle{abbrv}
\bibliography{references}
\end{document}